\documentclass[11pt,reqno]{amsart}

%


\usepackage[english]{babel}
\usepackage{amsmath}
\usepackage{amsthm}
\usepackage{graphicx}
\usepackage{upgreek}
\usepackage{hyperref}
\usepackage{mathrsfs}
\usepackage{xcolor}
\usepackage{bm,bbm}
\usepackage{amsfonts}
\usepackage{amssymb}
\usepackage{csquotes}
\usepackage{stmaryrd}
\usepackage{ulem}
\usepackage{tikz-cd}
\usepackage{algorithm}
\usepackage{algpseudocode}
\usepackage{enumerate}
\usepackage{soul}

\usepackage[margin=1in]{geometry}


\newtheorem{definition}{Definition}
\newtheorem{remark}{Remark}
\newtheorem{proposition}{Proposition}
\newtheorem{lemma}{Lemma}
\newtheorem{theorem}{Theorem}

\newtheorem{assumption}{Assumption}

\newcommand{\R}{\mathbb{R}}

\newcommand{\N}{\mathbb{N}}

\newcommand{\eps}{\varepsilon}



\newcommand{\al}{\alpha}

\newcommand{\EE}{{\mathbb E}}

\newcommand{\PP}{{\mathbb P}}

\newcommand{\calA}{{\mathcal A}}

\newcommand{\calC}{{\mathcal C}}

\newcommand{\calK}{{\mathcal K}}
\newcommand{\calL}{{\mathcal L}}

\newcommand{\calN}{{\mathcal N}}
\newcommand{\calO}{{\mathcal O}}
\newcommand{\calP}{{\mathcal P}}
\newcommand{\calQ}{{\mathcal Q}}

\newcommand{\calU}{{\mathcal U}}
\newcommand{\calV}{{\mathcal V}}

\newcommand{\calX}{{\mathcal X}}

\newcommand{\scrM}{\mathscr{M}}

\newcommand{\scrE}{\mathscr{E}}
\newcommand{\scrQ}{\mathscr{Q}}

\newcommand{\pl}{\partial}

\newcommand\iy{\infty}

\newcommand{\A}{\mathbb{A}}

\newcommand{\1}{\boldsymbol{\mathbbm{1}}}

\DeclareMathOperator*{\argmin}{arg\,min}
\DeclareMathOperator*{\argmax}{arg\,max}
\newcommand{\CDGME}{C_{DGME}}



\begin{document}

\title{Deep Backward and Galerkin Methods for the Finite State Master Equation}

\author{Asaf Cohen, Mathieu Lauri\`ere, Ethan Zell}\thanks{* This is the final version of the paper. To appear in {\it Journal of Machine Learning Research
}.}

\begin{abstract}
This paper proposes and analyzes two neural network methods to solve the master equation for finite-state mean field games (MFGs). Solving MFGs provides approximate Nash equilibria for stochastic, differential games with finite but large populations of agents. The master equation is a partial differential equation (PDE) whose solution characterizes MFG equilibria for any possible initial distribution. The first method we propose relies on backward induction in a time component while the second method directly tackles the PDE without discretizing time. For both approaches, we prove two types of results: there exist neural networks that make the algorithms' loss functions arbitrarily small and conversely, if the losses are small, then the neural networks are good approximations of the master equation's solution. We conclude the paper with numerical experiments on benchmark problems from the literature up to dimension $15$, and a comparison with solutions computed by a classical method for fixed initial distributions.
\end{abstract}
\maketitle

\tableofcontents

{\bf Keywords:}
  Mean field game, deep Galerkin, deep backward, master equation, neural network, Nash equilibrium, PDE, stochastic differential game. 

\section{Introduction}

\subsection{Introduction to MFGs}

In 1950, John Nash introduced the concept of a Nash equilibrium, an idea that describes when players trying to minimize an individual cost have no incentive to change their strategy~\cite{nash1951non}. Nash proved that, under mild conditions, a Nash equilibrium always exists in mixed strategies. The question remained however, even when a Nash equilibrium was known to exist, whether there is an algorithmic way to solve for it. It turns out that, in general, computing a Nash equilibrium is a very difficult task~\cite{daskalakis2009complexity}.


The origins of limiting models for many-agent games can be traced back to \cite{Aumann1964} and to  \cite{Schmeidler1973}, both addressing one-shot games. In the realm of stochastic dynamical games, mean field game (MFG) theory was pioneered by \cite{LasryLions2, LasryLions}, along with an independent development by \cite{Huang2006}. MFG paradigms study situations in which a very large number of agents interact in a strategic manner. Specifically, MFGs provide one way to approximate Nash equilibria of large, anonymous, symmetric finite-player games.

%
%
%
%
%
%
%
In characterizing an equilibrium in MFGs, we introduce a {\it representative player} who, instead of reacting to $n$ players, responds to a flow of measures. The {\it MFG equilibrium} is consequently defined by a fixed point, ensuring that the distribution of the dynamics of the representative player under optimality aligns with the flow of measures to which they respond. It is worth noting that Nash equilibria in finite-player games are similarly described using fixed points. 

Returning to the motivating question for $n$-player games, an MFG equilibrium provides an approximate Nash equilibrium in a corresponding finite-player game: if all the $n$ players use the strategy determined by the MFG, each player can be at most $\eps_n$ better off by choosing another strategy, and $\eps_n$ converges to zero as $n\to\infty$. This approximation is valid for games in which players are homogeneous and the interactions are symmetric, in the sense that they occur through the players' empirical distribution. 
While static games are prevalent in game theory for the sake of simplicity, dynamic games are more realistic for applications. In this work, we consider the MFG studied by \cite{bay-coh2019, cec-pel2019, GomesMohrSouza_discrete} which approximates an $n$-player stochastic differential game with a finite number of states. 

Classically, an MFG equilibrium can be described by a coupled forward-backward system of equations, called the {\it MFG system}. The backward equation is a Bellman-type equation describing the evolution of a representative player's {\it value function} (the optimal cost), from which the optimal strategy can be deduced. The forward equation describes the evolution of the population's distribution, which coincides with the distribution of one representative player over the state space. We refer to the books by \cite{Bensoussan2013} 
and \cite{CarmonaDelarue_book_I} for more details on this approach using partial differential equations (PDEs) or stochastic differential equations (SDEs). In finite-state MFGs as we will consider in this work, the forward-backward system is a system of ordinary differential equations (ODEs); the interested reader is referred to~\cite[Chapter 7.2]{CarmonaDelarue_book_I}. 

The solution to the forward-backward system depends on the initial distribution of the population. The representative player's \textit{value function} and control depend only \textit{implicitly} on the population distribution. This dependence can be expressed \textit{explicitly} through the notion of a decoupling field. The MFG equilibrium can be described by the so-called \textit{master equation}, introduced by \cite{Lionsvideo}. The interested reader is referred to the monograph by \cite{CardaliaguetDelarueLasryLions} for a detailed analysis of the master equation and the question of convergence of finite-player Nash equilibria to MFG equilibria. The solution to this equation is a function of the time, the representative player's state, and the population distribution, which will be denoted respectively by $t$, $x$ and $\eta$. The master equation derives from the MFG system (see Proposition~\ref{prop:me_results} below), whose structure is crucial for our analysis.  Besides fully characterizing the MFG equilibrium for any initial distribution, the master equation is also used to study the convergence of the finite player equilibrium to its MFG counterpart \cite{CardaliaguetDelarueLasryLions, del-lac-ram2019,delarue2020master}. In this paper, we are interested in the finite state, finite horizon master equation studied by \cite{bay-coh2019, cec-pel2019}:
\begin{equation}\label{master_equation:finite_horizon}
    \begin{cases}
    \pl_t U(t,x,\eta) + \sum_{y,z\in [d]} \eta_y D^\eta_{yz} U(t,x,\eta) \gamma_z^*(y,\Delta_y U(t,\cdot,\eta)) + \bar H(x,\eta,\Delta_x U(t,\cdot,\eta)) = 0, \\
    U(T,x,\eta) = g(x,\eta), \qquad (t,x,\eta) \in [0,T)\times [d]\times \calP([d]).
    \end{cases}
\end{equation} 
Above, $[d]:=\{1,\dots, d\}$ is the state space, $T$ is the terminal time of the game, $\bar H$ is the Hamiltonian (defined in Section~\ref{sec:notation}), and $g$ is a terminal condition. The function $\gamma^*$ denotes the minimizer of the Hamiltonian, which under regularity assumptions is equal to the gradient of the Hamiltonian. The variables are the time $t$, the state $x$, and the distribution $\eta$, which lies in the $(d-1)$-dimensional simplex $\calP([d])$. 
We use the notation  $\Delta_x U(t,\cdot,\eta):= (U(t,y,\eta)-U(t,x,\eta))_{y\in [d]}$ for each $x\in [d]$. Derivatives on the simplex, denoted by $D^\eta_{yz} U(t,x,\eta)$, can be thought of as usual directional derivatives and will be explicitly defined in Section~\ref{sec:notation}. 

Note that \eqref{master_equation:finite_horizon} is, in general, a nonlinear PDE, for which there is no analytical solution and the question of well-posedness is challenging (for an overview, see the book by \cite{MR2597943}). In fact, even computing numerically an approximate solution is a daunting problem. When $d\le 3$, traditional methods such as finite difference schemes can be employed. However, the computational cost of these methods becomes prohibitive for dimensions greater than $3$. For example, in grid-based methods, the number of points grows exponentially quickly in $d$. Other methods include Markov chain approximations, see \cite{BBC2017b}. We refer to \cite{MR4378818} for an overview of classical methods for PDEs and~\cite{achdoumean,MR4368188} for an overview of classical methods for MFGs.  Besides, many applications require $d > 3$. And additionally, \cite{Ber-cec2012} recently demonstrated that MFGs with a continuous state space can be approximated by MFGs with finite, but large, state spaces. This approximation relies on the convergence of the solution of the associated master equations as the number of states tends to infinity. Neural networks provide one avenue to mitigate the curse of dimensionality, and this has triggered interest for machine learning-based methods, as we discuss in the next section. 


The value function under the unique Nash equilibrium in the $n$-player game converges to the solution to the master equation up to an error of order $\calO(n^{-1})$ (and so do the optimal controls) \cite{bay-coh2019, cec-pel2019}. Then, we prove that the two algorithms we propose, the deep backward master equation (DBME) and deep Galerkin master equation (DGME) methods, approximate the master equation which, as discussed in the prior paragraph, is not easy to solve analytically. So while the primary object of interest for this paper is the master equation, our main contribution completes the picture for how to provably approximate the solution to the $n$-player game.

\subsection{Overview of Machine Learning for PDEs}


In this work, we propose and rigorously analyze deep learning methods to tackle the master equation~\eqref{master_equation:finite_horizon} for finite-state MFGs beyond what is possible with traditional numerical methods. Our numerical methods build upon the recent development of deep learning approaches for PDEs. In the past few years, numerous methods have been proposed along these lines, such as the deep BSDE method \cite{MR3736669}, the deep Galerkin method \cite{MR3874585}, physics informed neural networks~\cite{raissi2019physics} and deep backward dynamic programming~\cite{MR4322044}. In the context of MFGs and mean field control problems, deep learning methods have been applied using the probabilistic approach~\cite{carmona2022convergence,fouque2020deep,germain2022numerical} or the analytical approach~\cite{al2022extensions, cao2020connecting,carmona2021convergence,ruthotto2020machine}. The aforementioned works consider neural networks whose inputs are only the individual state and possibly time. A few works have considered neural networks taking as inputs a representation of the mean field. The paper by \cite{germain2022deepsets} proposed a deep backward method for mean field control problems. As with the first method (called DBME) we present below, it is based on a discrete-time, backward scheme. The rate of convergence in terms of number of particles was studied in~\cite{germain2022rate}. However, this method solves the Bellman equation for mean field control problems while we solve the master equation for MFGs, which fails to satisfy a comparison principle. Furthermore, \cite{germain2022deepsets} considered a continuous space model using particle-based approximations while we focus on a finite space model. Work by \cite{dayanikli2023deeppop} solves continuous space mean field control problems using population-dependent controls by directly learning the control function instead of using a backward dynamic programming equation. As for MFG with population-dependent controls, \cite{perrin2022generalization} proposed a deep reinforcement learning method for finite-state MFGs based on a fictitious play and deep Q-networks. In contrast, our two methods do not rely on reinforcement learning and we provide a detailed analysis of the algorithms' errors. We refer to the overview of \cite{hu2023recent} for a review of deep learning methods in optimal control and games.

In this work, we will focus on two types of methods. The first approach finds its roots in the pioneering work of ~\cite{MR3736669}, which exploits the connection between SDEs and PDEs: using Feynman--Kac type formulas, the PDE solution satisfies a backward SDE, that is, they satisfy a terminal condition. Corresponding to the SDE, the PDE has a terminal condition. Such SDEs and PDEs crop up often in mathematical finance, most iconically in the Black--Scholes equation for pricing European options \cite{MR3363443}. 
While the original method of~\cite{MR3736669} consists in replacing the BSDE by a shooting method for a forward SDE, subsequent works exploit the backward structure differently. In particular, deep backward methods were explored in control problems in the nonlinear and fully-nonlinear cases \cite{MR4081911, MR4322044}. The Deep Backward Dynamic Programming algorithm (DBDP) from \cite{MR4081911} (more specifically their DBDP2 algorithm) uses a sequence of neural networks on a discretization of $[0,T]$. Training is done by starting from the terminal time and, going backward in time, the neural network for a given time step is trained to minimize a loss function which involves the neural network at the following time step.


The second method we propose is the DGME, introduced as the deep Galerkin method by \cite{MR3874585}. The PDE solution is approximated by a neural network, which is a function of the time and space variables. Training is done by minimizing an empirical loss which captures the residual of the PDE, as well as possible boundary conditions. The loss is computed over points sampled inside the domain and, if needed, on its boundary. In this way, the method does not require any discretization of space or time. The solution is learned over the whole domain thanks to the generalization capability of neural networks. 

For these methods, most theoretical results rely on universal approximation theorems for suitable classes of functions related to the regularity of PDE solutions. In that sense, other classes of approximators could be used instead of neural networks. However, neural networks seem to be a very suitable choice to solve PDEs. Two of the main limitations of neural networks are the lack of explainability and the large amount of data required for training, but these are not issues when solving PDEs with the aforementioned methods. Indeed, explainability is not a priority when approximating PDE solutions, and data points are obtained by sampling over the PDE domain as many times as desired.

\subsection{Contributions and Challenges}  

While the well-posedness of \eqref{master_equation:finite_horizon} was studied by \cite{bay-coh2019} and \cite{cec-pel2019}, no analytical solution is known. Therefore we provide two algorithms, we prove their correctness, and numerically solve the master equation \eqref{master_equation:finite_horizon} for some examples.


\subsubsection{DBME}

The first method we propose, called deep backward master equation (DBME), is inspired by the DBDP of~\cite{MR4081911}, although there are key differences. An important disparity lies in the corresponding stochastic model for each setting: the diffusion dynamics found in the underlying DBDP problem are substituted with jump dynamics in the finite state MFG. Essentially, in the DBDP the target function is twice differentiable; this allows the use of correspondence between certain martingale and loss terms that would otherwise be hard to bound. Unfortunately in the case of the master equation, it is not necessarily true that the solution $U$ is twice differentiable. Another crucial distinction lies in the fact that the DBDP is designed to fit Hamilton--Jacobi--Bellman equations originating from optimal control problems. In contrast, the MFG solution manifests as a fixed point of a best response mapping within a control problem and consequently, the master equation lacks a so-called comparison principle, which leads us to design distinct algorithms and proofs of convergence. To establish the convergence, we leverage the transformation of the master equation into the coupled, forward-backward MFG system for a fixed initial distribution. At a technical level, another difference lies in the fact that the DBME minimizes a maximum-based error, while the DBDP minimizes an $L^2$-error.  All these differences necessitate a different algorithm and a different analysis. To overcome these difficulties, the analysis for the DBME, like that of the DGME, makes use of the MFG system directly. Our approach contrasts with that of the DBDP which uses propagation by SDEs.

\subsubsection{DGME}

The second algorithm we consider builds upon the DGM of~\cite{MR3874585} and adapts it to solve the master equation~\eqref{master_equation:finite_horizon}, as explored by \cite[Algorithm~7]{MR4368188}. That DGM adaptation is recalled here in Algorithm \ref{alg:DGM_ME}, with a modification. As used by \cite{MR4368188}, the DGM minimizes the expectation of an $L^2$-error. In our formulation, the DGME minimizes a worst-case loss function formulated as a maximum that is, in practice, sampled and not a true maximum over the entire state space. While {\it applying} the DGM to the master equation is not novel, the finite state master equation is a different form of PDE than that studied by \cite{MR3874585} and so the convergence theorem (Theorem 7.3) of their work does not apply. Moreover, no proofs of DGM convergence were offered by \cite{MR4368188}.

We prove the convergence of our DGME algorithm to the unique master equation solution in Theorem \ref{thm:dgm_convergence}. Our result is analogous to that of \cite[Theorem~7.3]{MR3874585}, but as previously mentioned, since the PDE structure differs, our proof requires entirely different methods. Namely, recall that the master equation can be constructed from the MFG system and our proof relies on the structure of the MFG system. Using the neural network trained by the DGME, we construct an approximate MFG system and use MFG techniques (specifically, MFG duality) to compare the approximate solution and the true solution. 
As a consequence of our modification and the structure of the master equation, we obtain convergence of the neural network approximation to the true solution in the supremum norm, which is different from and arguably stronger than the $L^2$-convergence obtained by \cite{MR3874585}. 
Broadly speaking, this choice allows us to deal with the maximum over all states instead of an expectation and hence our analysis is state-agnostic. One more subtle difference is that \cite{MR3874585} tackle the case of a neural network with one-hidden layer for simplicity's sake, while the proofs we offer for the DGME apply to feedforward networks of arbitrary depth. This generality comes from our use of the universal approximation theorem \cite[Theorem~3.1]{Hornick90}.

\subsubsection{The convergence results} For each algorithm, we have two main types of results. The first type of result, found in Theorems \ref{thm:dbme_existence} and \ref{prop:existence}, builds on universal approximation properties of neural networks. We show that there exist neural networks that approximate the master equation solution and that when this is the case, the algorithms' loss functions are small. 

The second type of result, found in Theorems \ref{thm:DBME} and \ref{thm:dgm_convergence}, asserts that when the algorithm results in a neural network (or in the case of the DBME, a family of neural networks) with small loss, then the obtained network is in fact close to the true solution in supremum norm. The rate of convergence in each case depends on the loss value and, in the case of the DBME only, the size of the time partition. Note that the empirical loss achievable in practice by the algorithm depends on the depth and width of the neural network, the number of epochs it is trained for, the amount of training data, and so on. This type of bound is morally different from that of \cite{MR4081911}, which uses a constant obtained from universal approximation.

\subsection{Structure of the Paper}

We begin by defining notation in Section~\ref{sec:notation} before providing a description of the MFG model and recalling the major results concerning the MFG and the master equation in Section~\ref{sec:me_recall}. In Section \ref{sec:nns} we outline the architecture of the neural networks we consider. Then in Sections~\ref{sec:dbme} and~\ref{sec:dgm}, we describe and analyze the DBME and DGME, respectively. Finally, in Section~\ref{sec:numerics}, we present numerical results for two examples using these algorithms.

\section{Notation}\label{sec:notation}

We let $[d]:=\{1,\dots,d\}$ be the finite set of states. The finite difference notation for a vector $b\in\R^d$ is $\Delta_x b:= (b_y-b_x)_{y\in [d]}$, $x\in [d]$. Generally, $x,y,z\in [d]$ unless otherwise stated. A probability measure over $[d]$ is identified with the $(d-1)$-dimensional simplex in $\R^d$, which we denote $\calP([d])$; by this we mean that:
\[
    \calP([d]) := \Big\{\eta\in \R^d : \sum_{x\in [d]} \eta_x = 1, \quad \eta_x \geq 0 \Big\}.
\] Unless otherwise stated, $\eta,\mu \in \calP([d])$. For a vector in $\R^k$, $k\in\N$, $|\cdot|$ denotes the usual Euclidean norm. When $A$ is a finite set, $|A|$ denotes the cardinality of the set. For any Euclidean spaces $\scrE_1,\dots,\scrE_{k_1}$ and any measurable function $\varphi:\prod_{j=1}^{k_1} \scrE_j \to \R^{k_2}$, $k_1,k_2\in\N$, we denote by $\|\varphi\|_\iy$ the supremum norm of $\varphi$. In the case that some arguments of $\varphi$ are specified, the essential supremum is taken over only the unspecified arguments. To alleviate the notations, we will denote by $C_\varphi$ the supremum norm of a bounded function $\varphi$, and by $C_{L,\varphi}$ the Lipschitz constant of a Lipschitz function $\varphi$.

The measure derivative on the simplex $\calP([d])$ is written $D^\eta$ (where the direction of the derivative is specified below). 
Since the simplex is finite-dimensional, one can think of $D^\eta$ as the usual directional derivative; that is, for each $\eta\in\calP([d])$ and for each $y\in[d]$, such that $\eta_y>0$, we define:
\begin{equation}
    \label{eq:def-D-eta}
    D^\eta_{yz} \varphi(\eta) := \lim_{h\searrow 0} \frac{\varphi(\eta+e_{yz}h) - \varphi(\eta)}{h},
\end{equation}
where $e_{yz}$ is the $z$-th standard basis vector of $\R^d$ minus the $y$-th, and $\varphi:\calP([d]) \to \R$ is a measurable function such that the limit exists. Also, $D^\eta_{y} \varphi := (D^\eta_{yz} \varphi)_{z\in [d]}$ is a vector. Note that by definition $D^\eta_{yy} \varphi(\eta)=0$. Furthermore, in equation \eqref{master_equation:finite_horizon} this derivative is multiplied by $\eta_y$, so its definition at $\eta_y=0$ does not affect the expression. Additionally, $D^\eta_{yy}$ in \eqref{master_equation:finite_horizon} is multiplied by $\gamma_y^*(y,\Delta_y U(t,\cdot,\eta))$, which does not represent a rate of transition. Alternatively, we could define the derivatives for $y \ne z$ only and use the double sum in \eqref{master_equation:finite_horizon} as $\sum_{y\in[d]}\sum_{z\in[d], z\ne y}$.

Let $\calC^{1,1}_{}([0,T] \times \calP([d]))$ be the set of $\R$-valued functions defined on $[0,T] \times \calP([d])$, where the time derivative is continuous and whose measure derivative $D^\eta$ is Lipschitz. The space $\calC^{1,1}([0,T] \times \calP([d]))$ can be thought of as combining the Banach space of continuously differentiable, real-valued functions $\calC^1([0,T])$ with the H\"older space $\calC^{1,1}(\calP([d]))$ as defined in \cite[Section~5.1]{MR2597943}. Let $\scrE$ be a Euclidean space and, for a given differentiable function $R:\scrE \to\R$ with derivative $D^{\scrE} R$, we define the $(1, 1)$-H\"older seminorm as: 
\[
    [R]_{\calC^{1,1}} := \sup_{p\neq q} \Big\{\tfrac{|D^{\scrE} R(p) - D^{\scrE} R(q)|}{|p-q|}\Big\}.
\] The first ``$1$" in the notation $\calC^{1,1}$ refers to the order of the derivative, while the second refers to the modulus of continuity. Then, the $(1,1)$-H\"older norm is:
\[
    \|R\|_{\calC^{1,1}} := [R]_{\calC^{1,1}} + \| D^{\scrE} R \|_{\iy} + \|R\|_\iy. 
\] 
With this in mind, we endow the space $\calC^{1,1}([0,T]\times\calP([d]))$ with the norm:
\begin{align}\label{eqn:main_norm}
    \|g\|_{\calC^{1,1}}:=  \|g\|_\iy +\|\pl_t g\|_\iy +  \|D^\eta_1 g\|_\iy + \sup_{t\in[0,T]}[g(t,\cdot)]_{\calC^{1,1}},
    \end{align} 
where $D^\eta_{yz}$ is defined in~\eqref{eq:def-D-eta}, and recall that, by definition of $\|\cdot\|_\iy$, the supremum is taken over all $(t,\eta)\in\calP([d])\times[0,T]$. Note that $\calC^{1,1}([0,T]\times \calP([d]))$ is a Banach space with $\|\cdot\|_{\calC^{1,1}}$. 

A few times in the paper we will mention the space of continuously differentiable functions $\calC^1(X, Y)$ where $X$ is either $[t_0,T]$ or $[t_0,T] \times [d]$ with $t_0 \in [0,T]$ and where $Y$ is some subset of euclidean space. We endow this space with the norm:
\[
\|g\|_{\calC^1(X, Y)} := \|g\|_\iy + \|\partial_t g\|_\iy,
\] making it Banach.

\section{The Finite State Master Equation and the MFG}\label{sec:me_recall}

In this section, we continue with structure and the notation adopted by \cite{bay-coh2019, cec-pel2019} with only minor additions. For readability, we recall the notation here. Throughout we fix $(\Omega, \mathscr{F},(\mathscr{F}_t)_t, \PP)$, a filtered probability space in the background satisfying the usual conditions.

\subsection{The MFG} We consider a representative player that uses Markovian controls taking values in a set of rates $\A\subseteq \R_+ := [0,+\iy)$. We will consider rates bounded in a compact set; that is, $\A := [\mathfrak{a}_l, \mathfrak{a}_u]$, with $0\leq \mathfrak{a}_l \leq \mathfrak{a}_u < +\iy$. A {\it Markovian control} is  a measurable function $\alpha:\R_+\times [d]\to \A^d_{[d]}$ where $\A^d_{[d]}=\cup_{x\in[d]}\A^d_{-x}$, and:
\[
\A_{-x}^d := \Big\{ a\in \R^d \mid \forall y\neq x, \quad a_y \in\A, \quad a_x = -\sum_{y\neq x} a_y\Big\}.
\] Here, for any $y\neq x$, $\al_y(t,x) := \al(t,x)_y \in \A$ is the rate of transition at time $t$ to move from state $x$ to state $y$ and as usual with continuous-time Markov chains, $\alpha_x(t,x)=-\sum_{y,y\ne x}\alpha_y(t,x)$. We denote by $\calQ[\A]$ the set of $d\times d$ transition-rate matrices with rates in $\A$. We denote by $\calA$ the set of measurable mappings $\alpha:[0,T]\to\calQ[\A]$. 

Fix $t_0 \in [0,T]$ to be the initial time. The jump dynamics of the representative player's state $\calX$ are given by:
\begin{equation}\label{eqn:jump_dynamics}
    \calX_t = \calX_{t_0} + \int_{t_0}^t \int_{\A^{d}} \sum_{y\in [d]} (y-\calX_{s^-})\boldsymbol{1}_{\{\xi_y\in(0,\al_y(s,\calX_{s^-}))\}} \calN(ds,d\xi), \qquad t\in[t_0,T].
\end{equation} 
where $\calN$ is a Poisson random measure with intensity measure $\nu$ given by
\begin{equation}\label{n_intensity_measure}
    \nu(E) := \sum_{y\in [d]}\text{Leb}(E\cap \A^{d}_y),
\end{equation} 
with $\A^{d}_y := \{a\in\A^d \mid a_x = 0 \text{ for all }x\neq y\}$, where $\text{Leb}$ is the Lebesgue measure on $\R$, and $E$ is any Borel measurable set. We can think of 
$\A^d_y$ as the $y$-projection of 
$\A^d$, the set of rates for each state transition; thus, $\A^d_y$ is the set of transition rates from state $y \in [d]$.


In order to set up the MFG equilibrium, we describe the cost structure of the game with respect to a representative player, who selects a Markovian control $\al$ in order to  play against a smooth flow of measures $\mu \in\calC^{1}([t_0,T], \calP([d]))$. 
We emphasize that the representative player's cost includes the flow of measures $\mu$. In this work, we make use of the regularity result from \cite{cec-pel2019}, which assumes that the {\it total running cost} is separable in the form $f + F$. The running cost is $f:[d]\times\A^d_{[d]}\to\R$, a function of the current state and control, while the mean field cost is $F:[d]\times \calP([d]) \to\R$, a function of the state and mean field distribution. Recall that $\alpha_x(t,x)$ does not represent a rate, 
thus $f(x,a)$ is assumed independent of $a_x$ as a technicality.
Given a flow $\mu \in\calC^{1}([t_0,T], \calP([d]))$ of distributions, the representative agent's control problem is to minimize: 
\begin{equation}
\label{eq:def-J-integral}
    J(t_0,x,(\mu(s))_{s\in [t_0,T]},\al) = \EE_{(x,\eta)} \Big[g(\calX_T, \mu(T)) + \int_{t_0}^T \big[f(\calX_s, \al(s,\calX_s)) + F(\calX_s, \mu(s)) \big]ds\Big],
\end{equation} 
over all $\al\in\calA$ where $(\calX_{t_0},\mu(t_0)) = (x,\eta) \in [d]\times \calP([d])$.
Given the flow of measure $\mu$ and an initial state $\calX_{t_0} = x \in[d]$, the distribution (or law) of the representative player, denoted $(\calL(\calX_t))_{t\in [t_0,T]}$, is a fully deterministic flow of measures on $\calP([d])$. Formally, when $(\calL(\calX_t))_{t\in[t_0,T]}=(\mu(t))_{t\in[t_0,T]}$  and $\al$ minimizes the cost:
\[
    J(t_0, x, (\mu(t))_{t\in [t_0,T]}, \al),
\] 
we say that we have a \textit{mean field game equilibrium} and we refer to this optimal cost as the \textit{value function}. To be mathematically precise, the MFG equilibrium is a fixed point, as described next.
\begin{definition}[MFG equilibrium]\label{def:mfe}
    Fix $t_0\in [0,T]$ and $\mu_0 \in \calP([d])$. A pair $(\hat\al,\hat\mu) \in \calA \times \calC^{1}([t_0,T], \calP([d]))$  is a MFG equilibrium over the interval $[t_0,T]$ for the initial distribution $\mu_0$ if the following two conditions are satisfied. First, $\hat\al$ is an optimal control for $\EE_{x \sim \mu_0}[J(t_0,x,\hat\mu,\hat\al)]$ and second, for every $t \in [t_0,T]$, $\hat\mu(t) = \calL(\calX^{\hat\al}_t)$, where $\calX^{\hat\al}$ solves~\eqref{eqn:jump_dynamics} controlled by $\hat\al$.
\end{definition}

Crucially, notice that the equilibrium is defined for a fixed initial distribution $\mu_0$. For different initial distributions, we generally obtain different equilibrium controls and mean field flows. The master equation characterizes MFG equilibria for \textit{any} initial distribution.

As explained below in Section~\ref{sec:mfg-known-results}, the equilibrium can be characterized using optimality conditions which involve the Hamiltonian, defined as: 
\[
\bar H (x,\eta, b) := \min_{a\in\A^d_{-x}} \Big\{f(x,a) + F(x,\eta) + \sum_{y\neq x} a_yb_y \Big\} 
    = H (x,b) + F(x,\eta),
\] where,
\[
H (x,b) := \min_{a\in\A^d_{-x}} \Big\{f(x,a) + \sum_{y\neq x} a_yb_y \Big\}. 
\] At times, using $\bar H$ rather than $H+F$ will simplify the presentation of the PDE. 

\begin{remark}
The separability of the cost and the Hamiltonian is a standard assumption in the MFG literature. This, combined with the monotonicity assumption on functions $F$ and $G$, leads to the uniqueness of the MFG solution and the corresponding master equation solution, see e.g., \cite{CardaliaguetDelarueLasryLions, bay-coh2019, cec-pel2019} as well as \cite[P. 652]{CarmonaDelarue_book_I}.
It is important to note that while these are the prevalent assumptions, there are alternative conditions in the literature, such as alternative monotone conditions, see e.g., \cite{gan-mes2022, gan-mes-mou-zha2022, gra-mes2023}, that imply the uniqueness of solutions to MFG systems and the well-posedness of the corresponding master equations; or the anti-monotonicity condition, which can also establish global well-posedness for mean field game master equations with nonseparable Hamiltonians, see \cite{2022arXiv220110762M}.

We emphasize that the goal of this paper is to leverage the established results from previous works to solve the master equation, rather than to propose new conditions for achieving uniqueness in the solution of the master equation.
\end{remark}

\subsection{Assumptions}

In this section, we list the assumptions made on our MFG model. Such assumptions are standard and appear in previous work by \cite{bay-coh2019, cec-pel2019, CZ2022}. 

Recall that the action set is $\A:=[\mathfrak{a}_l, \mathfrak{a}_u]$ where $0\leq \mathfrak{a}_l \leq \mathfrak{a}_u< \iy$.\footnote{In the papers by \cite{bay-coh2019, CZ2022} it is assumed that $\mathfrak{a}_l >0$; however in his thesis, Cecchin managed to relax the assumption to allow for $\mathfrak{a}_l =0$ \cite{cec-pel2019}.} 

\begin{assumption}
    The Hamiltonian has a unique minimizer to which we refer to as the {\rm optimal rate selector} and is denoted:
\begin{equation}
\label{eq:def-gammastar}
    \gamma^* (x,b) := \argmin_{a\in\A^d_{-x}} \big\{f(x,a) + a\cdot b\big\}.
\end{equation} The optimal rate selector $\gamma^*$ is a measurable function. 
\end{assumption}

We note that since $\gamma^*$ is given by~\eqref{eq:def-gammastar}, one sufficient condition to guarantee this is when $f$ is strictly convex in $a$.

As for the mean field cost $F$, and the terminal cost $g$, we assume 
\begin{assumption}
    The functions $F$ and $g$ are continuously differentiable in $\eta$ with Lipschitz derivative; namely, for any $x\in [d]$ $F(x,\cdot) \in \calC^1(\calP([d]))$ and $D^\eta_{1z} F(x,\cdot)$ is Lipschitz. Moreover, $F$ and $g$ are Lasry--Lions monotone, meaning: for both $\phi=F,g$,
    \begin{align}\label{eqn:LL-monotone}
        \sum_{x\in [d]} (\phi(x,\eta) - \phi(x,\hat\eta))(\eta_x - \hat\eta_x) \geq 0,\qquad\forall \eta, \hat\eta\in\calP([d]).
    \end{align} 
\end{assumption} 
Intuitively, the Lasry--Lions monotonicity condition means that a representative player would prefer to avoid congestion to decrease their cost. 

In the following assumption, we specify the regularity of $H$, which, since it is defined through $f$ implicitly defines most of the necessary regularity for $f$. Moreover, since $F$ is defined on a compact set, its regularity assumptions imply it is bounded.

\begin{assumption}
    Assume $f$ is bounded. We let 
    \begin{align}\label{eqn:interval_restriction}
        W := \sqrt{2d}[T(C_f + C_F) + C_g] + 1,
    \end{align} 
    and we assume that, on the compact set $[-W, W]$, 
    the derivatives $D^2_{pp}H$ and $D_p H$ exist and are Lipschitz in $p$. Moreover, there exists a positive constant $C_{2,H}$ such that:
    \begin{align}\label{eqn:H_regularity}
        D^2_{pp} H(x,p) \leq -C_{2,H}.
    \end{align} 
\end{assumption}

Note that when $H$ is differentiable, \cite[Proposition~1]{Gomes2013} proved:
\begin{align}\label{eqn:DH}
    \gamma^*(x,p) = D_p H(x,p).
\end{align}

In order to keep this level of generality for the regularity of $H$ (namely, that these additional assumptions hold on $[-W,W]$ but not necessarily globally), we will on occasion prove that a particular $p$-argument of $H$ is uniformly bounded by $W$. For the purpose of this paper, the assumption on $W$ in \eqref{eqn:interval_restriction} is not too restrictive. Computations with the Hamiltonian only appear with the argument $\Delta_x U$ or $\Delta_x \calU$, where $\calU$ is a neural network specified in the following sections that approximates $U$. As we explain in detail in Remark \ref{rmk:nn_bound}, both of these arguments are bounded by $W$ from \eqref{eqn:interval_restriction} and hence the Hamiltonian will always be regular enough when it is required.

\begin{remark}
\label{rem:gammastar-Lip}
    From the above assumption on $H$, we have that $\gamma^*$ is locally Lipschitz. Note that a sufficient condition is that $f$ is uniformly convex in $a$. 
\end{remark}

Throughout the paper $C$ denotes a generic, positive constant that depends only on the problem's parameters (that is, the parameters introduced above). To alleviate excessive notation, $C$'s value may change from one line to another.

\subsection{Known Results}\label{sec:mfg-known-results}

Next up, we recall some established results from the study of finite-state, finite-horizon MFGs and master equations. 

The functions $u^{t_0, \eta}$ and $\mu^{t_0, \eta}$ are defined in a moment but we first describe their meanings. The measure $\mu^{t_0, \eta}$ is the evolving MFG equilibrium as in Definition \ref{def:mfe}. The value function $u^{t_0,\eta}$ is the value function (optimal cost) of the MFG starting at time $t_0$ with initial distribution $\eta \in\calP([d])$ along the MFG equilibrium. So, the value $u^{t_,\eta} (t,x)$ is the remaining optimal cost to a player in state $x$ at time $t\geq t_0$ until the game ends, at time $T$. 


The MFG system is:
\begin{align}
    \begin{split}\label{eqn:mfg}
    &\frac{d}{dt} u^{t_0,\eta} (t,x) + \bar H(x, \mu^{t_0,\eta}(t), \Delta_x u^{t_0,\eta} (t,\cdot)) = 0, \qquad (t,x) \in [t_0,T] \times [d],\\ 
    &\frac{d}{dt} \mu^{t_0,\eta} (t,x) = \sum_{y\in [d]} \mu^{t_0,\eta} (t,y) \gamma^*_x(y,\Delta_y u^{t_0,\eta} (t,\cdot)), \qquad (t,x) \in [t_0,T] \times [d] ,\\ 
    &\mu^{t_0,\eta} (t_0,x) = \eta(x), \qquad x \in [d], \\ 
    &u^{t_0,\eta}(T,x) = g(x,\mu^{t_0,\eta} (T)), \qquad x \in [d].
\end{split}
\end{align} Note that the system is composed of a forward equation and a backward equation. 
The above ODE system characterizes the MFG equilibrium for a given initial condition $(t_0,\eta)$. However, our goal is to compute the MFG equilibrium for every initial condition. This would require solving a continuum of ODE systems, which is not feasible. Instead, we will focus on the master equation. Informally, it encompasses the continuum of ODE systems and its solution captures the dependence of the value function on the mean field. 
More precisely, the master equation's solution $U$ is defined through its MFG system. The following proposition is a combination of the results from \cite[Proposition~1, Proposition~5, Theorem~6]{cec-pel2019}\footnote{In fact, we reuse the argument from Proposition~1, but applied to the MFG system.} and \cite[Section 1.2.4]{CardaliaguetDelarueLasryLions}.

\begin{proposition}\label{prop:me_results}
There exists a unique solution, denoted by $(u^{t_0,\eta}, \mu^{t_0,\eta})$, in $\calC^{1}([t_0, T] \times [d], \R) \times \calC^{1}([t_0,T] \times [d], \calP([d]))$ to \eqref{eqn:mfg}. Let $U$ be defined by: 
\begin{align}\label{eqn:U_def}
    U(t_0,x,\eta) := u^{t_0,\eta} (t_0,x).
\end{align} 
Then $U$ is the unique classical solution to the master equation~\eqref{master_equation:finite_horizon}. Moreover, the consistency relation holds: for all $t_0 \in [0,T]$, 
\begin{align}\label{eqn:consistency_relation}
    U(t,x,\mu^{t_0,\eta}(t))=u^{t,\mu^{t_0,\eta}(t)}(t)=u^{t_0,\eta}(t), \qquad (t, x, \eta) \in [t_0,T] \times [d] \times \calP([d]).
\end{align}
Additionally, $U(\cdot, x,\cdot) \in \calC^{1,1}_{}([0,T] \times \calP([d]))$ for every $x\in [d]$. Moreover,
\begin{align}\label{eqn:me_cost_equivalence}
\begin{split}
    U(t_0,x,\eta) 
    &= J(t_0,x,(\mu^{t_0,\eta}(s))_{s\in [t_0,T]},(\gamma^*( \cdot, \Delta_\cdot U(s,\cdot,\mu^{t_0,\eta}(s))))_{s\in [t_0,T]}) 
    \\
    &= \EE_{(x,\eta)}\Big[ g(\calX_T, \mu^{t_0,\eta}(T)) \\
    &\qquad\qquad + \int_{t_0}^T \big[f(\calX_s, \gamma^*(\calX_s, \Delta_{\calX_s} (U(s,\cdot,\mu^{t_0,\eta}(s))))) + F(\calX_s, \mu^{t_0,\eta}(s))\big] ds\Big], 
\end{split}
\end{align}
where the second line is by definition of $J$, see~\eqref{eq:def-J-integral}. 
As a consequence of \eqref{eqn:me_cost_equivalence},
\begin{align}\label{eqn:U_bound}
    |U(t_0,x,\eta)| \leq C_g + T(C_f+C_F)=: \tilde T.
\end{align}
Recalling the definition of $W$ in~\eqref{eqn:interval_restriction}, note that $W = \sqrt{2d} \tilde{T} + 1$.

\end{proposition}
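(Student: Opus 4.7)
The plan is to construct $U$ from the MFG system \eqref{eqn:mfg} rather than attack the master equation directly, and then derive \eqref{master_equation:finite_horizon} as a corollary of the consistency relation. First I would establish existence and uniqueness of \eqref{eqn:mfg} for every initial pair $(t_0, \eta)$. Local existence follows from a Schauder-type fixed point applied to the best-response map: given a candidate flow $\mu$, solve the backward HJB ODE for $u^\mu$ using the local Lipschitz regularity of $\gamma^*$ (see Remark \ref{rem:gammastar-Lip}), then propagate forward through the Kolmogorov ODE with rates $\gamma^*(\cdot, \Delta u^\mu)$. The boundedness of $f, F, g$ gives the a priori estimate $\|u^{t_0,\eta}\|_\iy \le \tilde T$, hence $\|\Delta_x u^{t_0,\eta}\|_\iy \le \sqrt{2d}\,\tilde T < W$, which keeps every Hamiltonian evaluation inside the regular window $[-W, W]$ and allows me to extend the local solution to all of $[t_0, T]$. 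Uniqueness follows from the standard Lasry--Lions duality argument: subtracting the equations associated with two solutions $(u^1,\mu^1)$ and $(u^2,\mu^2)$, integrating $\frac{d}{dt}\sum_x (u^1-u^2)(\mu^1-\mu^2)$ by parts and using~\eqref{eqn:LL-monotone} plus the strict concavity~\eqref{eqn:H_regularity} forces $\mu^1 = \mu^2$ and then $u^1 = u^2$.

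Next I would define $U$ by \eqref{eqn:U_def} and verify the consistency relation \eqref{eqn:consistency_relation}. Fix $(t_0,\eta)$ and $t \in [t_0,T]$. The restriction of $(u^{t_0,\eta}, \mu^{t_0,\eta})$ to $[t,T]$ solves the MFG system on $[t,T]$ with initial distribution $\mu^{t_0,\eta}(t)$, so by uniqueness it coincides with $(u^{t, \mu^{t_0,\eta}(t)}, \mu^{t, \mu^{t_0,\eta}(t)})$; evaluating the first component at $t$ gives $U(t, x, \mu^{t_0,\eta}(t)) = u^{t_0,\eta}(t,x)$.

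For the regularity claim $U(\cdot,x,\cdot)\in\calC^{1,1}([0,T]\times\calP([d]))$, I would study how $(u^{t_0,\eta}, \mu^{t_0,\eta})$ depends on the data. Continuity in $(t_0,\eta)$ is standard ODE stability. Differentiability in $\eta$ is obtained by linearizing \eqref{eqn:mfg} in a direction $e_{yz}$: the linearized forward-backward system is itself a linear coupled ODE whose well-posedness again follows from Lasry--Lions monotonicity of the linearized coupling together with the $C^2$ regularity of $H$ on $[-W,W]$. This identifies $D^\eta_{yz} U(t_0, x, \eta)$ as $D^\eta_{yz} u^{t_0,\eta}(t_0, x)$ and, since the linearization is itself Lipschitz in $\eta$, yields the $\calC^{1,1}$ bound. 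Once consistency and regularity are in hand, the master equation drops out by differentiating \eqref{eqn:consistency_relation} in $t$ at $t=t_0$: the chain rule gives
\[
\frac{d}{dt}\Big|_{t=t_0} U(t,x,\mu^{t_0,\eta}(t)) = \pl_t U(t_0,x,\eta) + \sum_{y,z} D^\eta_{yz} U(t_0,x,\eta)\, \frac{d}{dt}\Big|_{t=t_0}\!\mu^{t_0,\eta}_z(t),
\]
which must equal $\frac{d}{dt}|_{t=t_0} u^{t_0,\eta}(t,x) = -\bar H(x,\eta,\Delta_x U(t_0,\cdot,\eta))$; substituting the Kolmogorov ODE evaluated at $t_0$, namely $\frac{d}{dt}|_{t=t_0}\mu^{t_0,\eta}_z(t) = \sum_y \eta_y \gamma_z^*(y, \Delta_y U(t_0,\cdot,\eta))$, produces exactly \eqref{master_equation:finite_horizon}. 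The terminal condition reads off directly from the terminal condition in \eqref{eqn:mfg}.

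The cost identity \eqref{eqn:me_cost_equivalence} is then a verification argument: since $\gamma^*(\calX_s, \Delta_{\calX_s} U(s,\cdot,\mu^{t_0,\eta}(s)))$ is the unique minimizer of the Hamiltonian along the equilibrium flow, it is the optimal Markovian control for the representative player's problem against $\mu^{t_0,\eta}$, so $u^{t_0,\eta}(t_0,x)$ coincides with the right-hand side of \eqref{eqn:me_cost_equivalence}; plugging in the bounds $C_f, C_F, C_g$ yields \eqref{eqn:U_bound}. The main obstacle is the $\calC^{1,1}$ regularity in $\eta$: the forward-backward nature of \eqref{eqn:mfg} means naive Gronwall estimates explode, and one really must combine Lasry--Lions monotonicity of both the original and the linearized systems with the second-order concavity \eqref{eqn:H_regularity} of $H$ on $[-W,W]$ to close the differentiability estimates.
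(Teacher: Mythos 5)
The paper does not actually prove this proposition: it is imported wholesale from Cecchin--Pelino (Propositions 1 and 5 and Theorem 6 of that reference, as the paper's footnote indicates), and your outline reconstructs essentially the same argument used there --- well-posedness of the MFG system via a fixed point for the best-response map plus Lasry--Lions duality for uniqueness, consistency via uniqueness and the flow property, $\calC^{1,1}$ regularity via linearization of the forward-backward system, the master equation by differentiating the consistency relation, and the representation \eqref{eqn:me_cost_equivalence} and bound \eqref{eqn:U_bound} by verification. One small slip worth fixing: your chain-rule display $\sum_{y,z} D^\eta_{yz} U \,\tfrac{d}{dt}\mu_z$ is mis-indexed (it carries a spurious free sum over $y$); the correct computation decomposes $\dot\mu = \sum_{y}\mu_y\sum_z \gamma^*_z(y,\cdot)(e_z-e_y)$ using that the rates sum to zero, which yields $\sum_{y,z}\mu_y\gamma^*_z(y,\cdot)D^\eta_{yz}U$ and hence exactly the transport term in \eqref{master_equation:finite_horizon}.
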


\begin{remark}
    By \eqref{eqn:me_cost_equivalence}, we can think of $U(t_0,x,\eta)$ as the value of the MFG equilibrium when the initial time is $t_0$, the initial state is $x$, and the initial distribution is $\eta$. Moreover, $(\gamma^*_y(x, \Delta_{x} (U(s,\cdot,\mu^{t_0,\eta}(s))))_{x,y\in[d]}$ is the rate matrix under the mean field equilibrium; namely, these are the rates that dictate how quickly the representative player transitions from one state to another under this equilibrium. By \eqref{eqn:consistency_relation}, we have $\gamma^*_y(x, \Delta_{x} (U(s,\cdot,\mu^{t_0,\eta}(s)))) = \gamma^*_y(x,\Delta_x u^{t_0,\eta}(s,\cdot))$, which are precisely the transition rates in the Kolmogorov equation for $\mu^{t_0,\eta} (t)$ from \eqref{eqn:mfg}.
\end{remark}

\begin{remark}
    Recall that the motivation for investigating the master equation in the first place was to solve the corresponding $n$-player game. Per \cite[Theorem~2.1]{bay-coh2019} or \cite[Theorem~1]{cec-pel2019}, the $n$-player game's value function (cost under the unique Nash equilibrium) converges to the master equation's solution with rate $n^{-1}$. Moreover by \cite[Theorem~2, Theorem~4]{cec-pel2019}, the $n$-player Nash equilibrium results in an empirical distribution of players whose trajectory converges to the MFG equilibrium, with rate depending on the notion of convergence. 
\end{remark}

\section{Neural Networks}\label{sec:nns}

With the major points from the MFG literature in hand, we can switch to the neural network side. We define the neural networks to be used as function approximators and outline their parameters. From a proof perspective, the DBME and DGME algorithms require networks with slightly different levels of regularity, so here we only describe what features the networks used for each algorithm have in common. 

Throughout the paper, the use of neural networks is justified by the Universal Approximation Theorem, specifically the version from \cite[Theorem~3.1]{Hornick90}. This version of the Universal Approximation Theorem asserts that any differentiable function defined on a compact domain in Euclidean space is arbitrarily well-approximated (and has derivative arbitrarily well-approximated) by a neural network (and its derivative). This is relevant in our case since we are concerned with the master equation solution, $U: [0,T] \times [d] \times \calP([d]) \to \R$, and we may identify $\calP([d])$ with a compact subset of $\R^{d-1}$.

We will denote the total number of dense hidden layers of a deep, feed-forward, fully-connected neural network as $L$, with the number of parameters in layer $\ell$ as $\delta_\ell$  for all $\ell \leq L$. By convention, the first layer will always be the input layer. For any such neural network, define $\bar\delta$ as the number of parameters. Throughout, the parameters are represented by a trainable vector $\theta \in \R^{\bar\delta}$. 

Depending on the algorithm at hand, the input dimension of the neural networks will differ. For the DBME method, the input dimension is $d+1$ since the time component is discretized as part of the algorithm. In the DGME, the input vector is $(t,x,\eta) \in [0,T] \times [d] \times \calP([d])$, which is of dimension $d+2$; so $\delta_1$ is either $d+4$ or $d+3$ when accounting for the affine transformation parameters in the neural network. We will give a concrete example later in \eqref{eqn:nn_structure}. In either case, the output dimension is $1$.

For simplicity, the activation function will be the same for all layers. We will denote it by $\phi$. So that necessary theoretical results are accessible, $\phi$ must be smooth and nonlinear so the neural networks we consider are smooth universal approximators; for example, the hyperbolic tangent function or the sigmoid function are good choices. We use $\calN\calN$ to denote the set of all such neural networks using activation function $\phi$. We denote the subset:
\begin{align}\label{eqn:NN_cap_def}
    \calN\calN(C_0, C_1) := \big\{ \calU \in \calN\calN : |\calU| \leq C_0 , |D^\eta \calU| \leq C_1 \big\}. 
\end{align} 
Note that what we mean by a neural network is not simply an architecture but an architecture with fixed values for the parameters. The constraints in~\eqref{eqn:NN_cap_def} are constraints on the parameters of the neural network.

The restriction in \eqref{eqn:NN_cap_def} on $|D^\eta \calU|$ implies that we are interested in Lipschitz neural networks. Recent machine learning research emphasizes that Lipschitz neural networks ensure robustness of a solution to new or unforeseen data \cite{fazlyab2019efficient, MR4207504, pauli2021training, MR1618535}.

Using the version provided by \cite[Theorem~3.1]{Hornick90}, the Universal Approximation Theorem asserts that $\calN\calN$ is dense in $\calC^{1,1}([0,T] \times \calP([d]))$ with the norm $\|\cdot\|_{\calC^{1,1}}$ defined in \eqref{eqn:main_norm}. We also define:
\begin{align}\label{eqn:dbme_dense_space}
    \calC^{1,1} ([d] \times \calP([d]) ; C_0, C_1) := \big\{ \calV \in \calC^{1,1} ([d] \times \calP([d])) : |\calV| \leq C_0, |D^\eta \calV|\leq C_1\big\}.
\end{align} Since we are only concerned with the master equation's solution, which is a Lipschitz function with Lipschitz derivatives (see Proposition~\ref{prop:me_results}), it will be appropriate at times to deal with this restricted set.

\section{The DBME Algorithm and Convergence Results}\label{sec:dbme}

Having given the necessary background, we now present a deep-learning-based backward algorithm for learning the master equation termed Deep Backward Master Equation (DBME).

\subsection{The DBME Algorithm \ref{alg:DBME}}\label{sec:dbme_details}
First, we introduce some notation relevant to the algorithm. Then, we justify some assumptions made on the neural networks under consideration, and introduce an auxiliary Kolmogorov forward ODE, before introducing the DBME algorithm itself. 

The time interval $[0,T]$ will be partitioned into $\pi := \{t_0=0, t_1, \dots, t_{N-1}, t_N =T\}$, with increments denoted $\Delta t_i := t_{i+1} - t_i$. Define \[
|\pi|:= \max_{i\in [N]} \Delta t_i.
\] Throughout we assume that there exist $\check c, \hat c>0$ such that: 
\begin{align}\label{eqn:pi_proportional}
\frac{\check c}{N} \leq |\pi| \leq \frac{\hat c}{N};
\end{align} that is, $|\pi|^{-1}$ is roughly proportional to $N$. For functions and random variables at time $t_i \in \pi$, we will often use only a subscript $i$ as opposed to $t_i$ for brevity.

For the time $t_i\in \pi$, we approximate the master equation's solution  $U(t_i, x, \eta)$ by $\calU_i \in \calN\calN$ with input dimension $\delta_1=d+2$. Its parameters are denoted by $\theta^i$ and we use the notation $\calU_i(x,\eta; \theta^i)$. We will sometimes use the shorthand notation $\hat\calU_i(x,\eta) := \calU_i(x,\eta; \hat\theta^i)$, where the vector of parameters $\hat\theta^i$ optimizes a loss function defined in the sequel. We will not use a neural network at the last time in the partition and simply set $\hat \calU_N(x,\eta) := g(x,\eta)$.

Let $L=2$ for instance and $\bar\delta = (\delta_2 (d+3))$. For the vector of parameters $\theta \in \R^{\bar\delta}$, denote the parameters from the scaling inside the activation function by $\check\theta$. We can write a neural network $\calU_i(x,\eta ; \theta ) \in \calN\calN$ as:
\begin{align}\label{eqn:nn_structure}
    \calU_i (x,\eta ; \theta) := \sum_{k \in [\delta_2]} b_k \phi(a^{0,k} x + a^k \cdot \eta + c^k),
\end{align} 
where $a^k := (a^{y,k})_{y \in [d]} \in \R^d$, $a^{0,k} \in \R$, $b\in\R^{\delta_2}$, $c^k \in \R$ for all $k=1,\dots, \delta_2$, and these parameters make up the entries in $\theta \in \R^{\bar\delta}$ and $\check \theta := (a^k)_{k=1}^{\delta_2}$. In the case $L=2$, we note that $\check \theta$ is a $(\delta_2 \times 2)$-matrix with real entries and denote $\|\cdot\|_2$ as the matrix operator $2$-norm. Per \cite[Section~4]{MR4207504}, we know that when $\|\check \theta\|_2 < K$, some positive real number, the Lipschitz constant of the network with parameters $\theta\in\R^{\bar\delta}$ is no more than $K$ (possibly after modifying $K$ to account for the Lipschitz constant of the activation function $\phi$). So, for the rest of this section, which focuses on the DBME algorithm, we define the following set of parameters:
\begin{equation}
    \label{eq:def-Theta-NN}
    \Theta := \{\theta \in \R^{\bar\delta} \mid \|\check \theta\|_2 \leq 2C_{L,U}\},
\end{equation}
where recall that $C_{L,U}$ is the Lipschitz constant of $U$ depending only on the problem parameters that can be computed from \cite[Section~3]{cec-pel2019}. Thus any network $\calU(\cdot, \cdot;\theta)$ where $\theta\in\Theta$ is Lipschitz with constant no more than $2C_{L,U}$.

\begin{remark}\label{rmk:nn_bound}
    Previously in \eqref{eqn:NN_cap_def}, we introduced the class of neural networks $\calN\calN(C_0,C_1)$ as appropriate for approximating $U$. In this remark, we elaborate further. By \eqref{eqn:U_bound}, the master equation solution is uniformly bounded by the constant $\tilde T$, given in \eqref{eqn:U_bound}. So, we are justified in truncating the neural network $\calU_i$ by replacing $\calU_i$ with $\calU_i \wedge \tilde T$. This ensures that $\calU_i$ will always be uniformly bounded. Using a result of \cite[Lemma~3.1]{CZ2022} and the boundedness of $U$ implies: 
    \[
    |\Delta_x U(t,x,\eta)| \leq \sqrt{2d} \max_{y\in [d]}|U(t,y,\eta)| \leq \sqrt{2d}\tilde T.
    \] 
    Because of the restriction of $\calU_i$ to $\calU_i\wedge \tilde T$, the same kind of computation holds for $\calU_i$ as in the previous display. Consequently, \eqref{eqn:interval_restriction} holds and the local regularity assumptions on the Hamiltonian $H$ given in \eqref{eqn:H_regularity} are in force with the neural network as an argument; namely the derivatives $D_pH$ and $D_{pp}^2H$ are Lipschitz and there exists a positive constant $C>0$ such that:
    \[
    D^2_{pp} H(x,\Delta_x \calU_i(\cdot, \eta)) \leq -C,
    \] for all $(x,\eta) \in [d]\times \calP([d])$ and all $i\in [N]$. The local regularity assumptions will be useful in the proof of algorithmic convergence. The upshot of this remark is that for any $\theta^i \in \Theta$,
    \[
    \calU_i(\cdot,\cdot;\theta^i)\wedge \tilde T \in \calN\calN(\tilde T, 2C_{L,U}).
    \] 
\end{remark}

One final element to introduce before presenting the algorithm is the solution of a forward Kolmogorov's equation. This equation mirrors the flow of measures $\mu$ in the MFG equilibrium. However, instead of utilizing the master equation's solution $U$ in the dynamics, we employ a neural network. This solution is a key element in the algorithm's loss function. 
In details: 
for any $\kappa \in \calP([d])$, any $i \in \{0,\dots,N-1\}$, and any $\theta\in\Theta$, define $M^\theta_{i}(\kappa) := \rho_i^{\theta,\kappa}(t_{i+1})$, where $\rho_i^\theta(t) :=(\rho_i^\theta(t,x))_{x\in[d]}$ uniquely solves the (forward) ODE:
\begin{align}
\begin{split}\label{eqn:M_theta_i_def}
    &\frac{d}{dt} \rho_i^{\theta,\kappa}(t,x)=\sum_{y\in[d]}\rho_i^{\theta,\kappa}(t,y)\gamma^*_x(y,\Delta_y  \calU_{i}(\cdot,\rho^{\theta,\kappa}_i(t);\theta)), \qquad (t,x)\in[t_i,t_{i+1}]\times[d],\\
    &\rho_i^{\theta,\kappa}(t_i) = \kappa.
\end{split}
\end{align} The previous display is Kolmogorov's forward equation over the time interval $[t_i,t_{i+1}]$ with initial distribution $\kappa \in \calP([d])$, and which corresponds to a non-linear Markov chain with rate matrix $\gamma^*_x(y,\Delta_y \calU_i(\cdot, \rho^{\theta, \kappa}_i(t) ; \theta^i))$. Note that the control's input is the difference $\Delta_y \calU_{i}$ of the neural network $\calU_i$, rather than that of $U$; in light of \eqref{eqn:consistency_relation}, this is similar to the MFG system \eqref{eqn:mfg}. Moreover, $\rho^{\theta, \kappa}_i$ appears within the rate matrix, which makes the Markov chain non-linear. The equation \eqref{eqn:M_theta_i_def} is well-posed because the neural network and $\gamma^*$ are Lipschitz. The notation $M^\theta_i (\kappa)$ can be thought of as the terminal distribution of this non-linear Markov chain at time $t_{i+1}$; that is, the effect of propagating the initial distribution $\kappa$ by a control using neural network values, parameterized by $\theta \in \Theta$.

\begin{algorithm}
\caption{DBME}\label{alg:DBME}
\begin{algorithmic}[1]
\State {\bf Input:} 
A vector of initial parameters $\boldsymbol{\theta} := (\theta^i)_{i=0}^{N-1}$. 
\State {\bf Output:} A grid of neural networks $(\calU_i)_{i=0,\dots,N}$ approximating the solution to \eqref{master_equation:finite_horizon} on $\pi$. 

\State $\hat\calU_N \gets g$
\State $i \gets N - 1$
\State $\calU_i$ initialized via $\theta^i$.
\For{$i$ from $N-1$ to $0$}
    \State Recalling \eqref{eqn:M_theta_i_def}, compute:
    $\displaystyle
    \hat \theta^i \in \argmin\limits_{\theta^i \in \Theta} L_i(\theta^i)
    $
    \State where
\begin{align}\label{eqn:loss_def}
    L_i(\theta^i) := \max\limits_{(x,\kappa)\in [d]\times \calP([d])} \Big| \hat\calU_{i+1} (x, M^{\theta^i}_i(\kappa)) - \calU_i(x, \kappa; \theta^i)+ (\Delta t_i) \bar H(x, \kappa, \Delta_{x} \calU_i(\cdot, \kappa;\theta^i))\Big|
\end{align} 
\State $\hat \calU_i (\cdot, \cdot) \gets \calU_i(\cdot, \cdot; \hat\theta^i) \wedge \tilde T$ 
\EndFor
\end{algorithmic}
\end{algorithm}

The next remark motivates the algorithm's structure and the form of the loss function \eqref{eqn:loss_def} in the algorithm.
\begin{remark}
From \eqref{eqn:mfg} and \eqref{eqn:U_def}--\eqref{eqn:consistency_relation} it follows that:
\begin{align*}
    U(t_{i+1},x,\mu^{t_i,\kappa}(t_{i+1}))-U(t_{i},x,\kappa)+(t_{i+1}-t_i)\bar H(x,\kappa, \Delta_x U(t_i,\cdot,\kappa))\approx 0
\end{align*} when $\Delta t_i = t_{i+1}-t_i$ is small. Recall that the dynamics of $(\mu^{t_i,\kappa}(t))_{t\in[t_i,t_{i+1}]}$, the flow of measures under equilibrium from \eqref{eqn:mfg}, depends on the master equation solution, making it unsuitable for the loss function. To address this, in the loss function, we replace $\mu^{t_i,\kappa}(t_{i+1})$ by $M^{\theta^i}_i(\kappa)$, defined through equation~\eqref{eqn:M_theta_i_def}, which is similar to the equation for $\mu^{t_i,\kappa}(t)$ in~\eqref{eqn:mfg} but with the neural network appearing in its dynamics instead of the function $U$. 
Notice that, while the loss uses the forward ODE~\eqref{eqn:M_theta_i_def}, it does not explicitly make use of the backward ODE. This is because the master equation captures the value function, see~\eqref{eqn:consistency_relation}.

Next, note that the algorithm goes backward in time. The value at terminal time $T$ is known. Then at time $t_{i}$, assuming that we have an estimation for $U(t_{i+1},x,\eta
)$ for any $(x,\eta)\in[d]\times\calP([d])$, we go backward in time. In the analysis to follow,
we use the structure of the loss function and \eqref{eqn:M_theta_i_def} to define a forward-backward system that resembles the MFG system \eqref{eqn:mfg} up to a small error that emerges from the loss function. It is the comparison between these two systems that yields the convergence result of Theorem \ref{thm:DBME}.
\end{remark}

\subsection{Remarks for the Practitioner}

We now make a few remarks on the algorithm that are relevant for its implementation.

\begin{remark}\label{rem52}
    The reader may wonder how to ensure that the vector of parameters $\theta^i$ obtained at each step in Algorithm \ref{alg:DBME} lies in the set $\Theta$ defined in~\eqref{eq:def-Theta-NN}. One solution proposed by \cite[Algorithm~2]{MR4207504} is to project the weights at each step of stochastic gradient descent. Projection guarantees $\theta^i \in \Theta$ and that the network obtained is Lipschitz. However, given a large enough upper bound $kC_{L,U}$, where previously in \eqref{eqn:nn_structure} we chose $k=2$, the projection will not occur in practice and the algorithm can essentially run normal SGD. The result from \cite{MR4207504} is preferred here because of its theoretical guarantees; however many other methods may be used to improve the regularity of a neural network, for instance dropout layers, learning rate schedulers, random initialization, and so on. 
\end{remark}

\begin{remark}\label{rmk:appx_max}
    In practice, we cannot compute a maximum over infinitely many elements $\kappa \in \calP([d])$ as we write in Algorithm \ref{alg:DBME}. Therefore in the implementation, we uniformly generate samples $\kappa$ in $\calP([d])$ and compute the propagation $M^\theta_i(\kappa)$ for each before minimizing the loss by SGD. In what follows we remark on the order of this error due to the approximation. Denote the set of all samples by $\calK$. Define:
    \begin{align*}
        \tilde L_i(\kappa;\theta^i) := \max_{x\in [d]} | \hat\calU_{i+1} (x, M^{\theta}_i(\kappa)) - \calU_i(x, \kappa; \theta^i)+ (\Delta t_i) \bar H(x, \kappa, \Delta_{x} \calU_i(\cdot, \kappa;\theta^i))|,
    \end{align*} so that $\tilde L_i$ is related to the algorithm's loss $L_i$ as:
    \begin{align*}
        L_i(\theta^i) = \max_{\kappa \in \calP([d])}\tilde L_i(\kappa ; \theta^i).
    \end{align*} 
For any $i=0,\ldots,N$, and $\theta^i$, choose the following:
\begin{align*}
\hat\kappa \in \argmax_{\kappa\in\calK}\tilde L_i(\kappa;\theta^i).
\end{align*}

\begin{lemma}[Approximation of Sampled Loss to Theoretical]\label{lemma:sample_conv}
    With $\hat\kappa$, the sampled maximum, as defined above, denote the cardinality of $\calK$ by $|\calK|$. Then, there exists a constant $C_d>0$ depending on the parameters of the problem and the dimension $d$ such that:
    \begin{align*}
        \EE |L_i(\theta^i) - \tilde L_i(\hat\kappa; \theta^i)| \leq C_d |\calK|^{-1/(d-1)}. 
    \end{align*}
\end{lemma}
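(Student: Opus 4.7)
The plan is to reduce the expected error to a product of a Lipschitz constant and the covering radius of $\calK$ in $\calP([d])$, and then invoke a classical covering-radius estimate for uniform samples on a $(d-1)$-dimensional compact set.

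First I would remove the absolute value: since $\calK\subset\calP([d])$, we have $\tilde L_i(\hat\kappa;\theta^i)\le L_i(\theta^i)$, so the quantity inside the expectation is nonnegative. Because $\kappa\mapsto\tilde L_i(\kappa;\theta^i)$ is continuous on the compact simplex (justified below), a maximizer $\kappa^\star\in\calP([d])$ exists; let $\tilde\kappa:=\argmin_{k\in\calK}|\kappa^\star-k|$ and set $R:=\sup_{\kappa\in\calP([d])}\min_{k\in\calK}|\kappa-k|$. Using that $\hat\kappa$ maximizes $\tilde L_i$ over $\calK$,
\begin{equation*}
L_i(\theta^i)-\tilde L_i(\hat\kappa;\theta^i)
= \tilde L_i(\kappa^\star;\theta^i)-\tilde L_i(\hat\kappa;\theta^i)
\le \tilde L_i(\kappa^\star;\theta^i)-\tilde L_i(\tilde\kappa;\theta^i)
\le \mathrm{Lip}_\kappa(\tilde L_i(\cdot;\theta^i))\cdot R.
\end{equation*}

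Next I would show that $\mathrm{Lip}_\kappa(\tilde L_i(\cdot;\theta^i))$ is bounded by a constant $C$ independent of $\theta^i\in\Theta$ and of $i\in\{0,\dots,N-1\}$. The function $\tilde L_i(\cdot;\theta^i)$ is a maximum over $x\in[d]$ of the three-term expression in \eqref{eqn:loss_def}, so it suffices to bound the Lipschitz constant of each term. The networks $\calU_i$ and $\hat\calU_{i+1}$ are Lipschitz in $\eta$ with constant at most $2C_{L,U}$ by the parameter constraint \eqref{eq:def-Theta-NN}. The map $\kappa\mapsto M_i^{\theta^i}(\kappa)$ is Lipschitz uniformly in $\theta^i\in\Theta$: the ODE \eqref{eqn:M_theta_i_def} has right-hand side that is Lipschitz in its state, because $\gamma^\star$ is locally Lipschitz (Remark~\ref{rem:gammastar-Lip}) evaluated at a uniformly bounded argument (Remark~\ref{rmk:nn_bound}), the network's measure derivative is uniformly bounded by $2C_{L,U}$, and the integration is over an interval of length $\Delta t_i\le\hat c/N$; Gronwall then yields a bound that is uniform in $\theta^i$ and $i$. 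Composition with the Lipschitz $\hat\calU_{i+1}$ handles the first term, the second term is trivially Lipschitz, and for the third term $\bar H$ is Lipschitz in $(\kappa,p)$ on the bounded region (Remark~\ref{rmk:nn_bound}) while the factor $\Delta t_i\le\hat c/N$ keeps the constant bounded.

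It remains to bound $\EE R$. Since $\calP([d])$ is a compact convex polytope of dimension $d-1$ and $\calK$ consists of $|\calK|$ i.i.d.\ uniform samples, a standard covering argument (cover the simplex by $O(\epsilon^{-(d-1)})$ balls of radius $\epsilon$, apply a union bound for the event that some ball is empty, and integrate the resulting tail) gives $\EE R\le C_d |\calK|^{-1/(d-1)}$, where the constant depends only on $d$ and the geometry of the simplex. Combining with the Lipschitz bound yields
\begin{equation*}
\EE|L_i(\theta^i)-\tilde L_i(\hat\kappa;\theta^i)|\le C\cdot\EE R\le C_d|\calK|^{-1/(d-1)},
\end{equation*}
after absorbing $C$ into $C_d$. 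The main obstacle is the second step: showing that the Lipschitz constant of $\kappa\mapsto M_i^{\theta^i}(\kappa)$ is uniform over $\theta^i\in\Theta$, which hinges on combining the structural weight constraint in \eqref{eq:def-Theta-NN} with the $a$ priori truncation $\calU_i\wedge\tilde T$ from Remark~\ref{rmk:nn_bound} so that $\gamma^\star$ is only ever evaluated on the compact set $[-W,W]^d$ where its Lipschitz constant is finite.
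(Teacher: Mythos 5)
Your reduction is sound and matches the paper's up to the last step: you identify the true maximizer, compare $\hat\kappa$ against the sample point nearest to it, and convert the difference of losses into a Lipschitz constant times a distance; your verification that the Lipschitz constant of $\kappa\mapsto\tilde L_i(\kappa;\theta^i)$ is uniform over $\theta^i\in\Theta$ and $i$ (via \eqref{eq:def-Theta-NN}, the truncation of Remark~\ref{rmk:nn_bound}, and Gronwall for $M_i^{\theta^i}$) is in fact more careful than what the paper writes. The gap is in the final probabilistic estimate. You replace the distance $\min_{k\in\calK}|\kappa^\star-k|$ from the \emph{fixed} maximizer by the covering radius $R=\sup_{\kappa\in\calP([d])}\min_{k\in\calK}|\kappa-k|$ and assert that a standard covering/union-bound argument gives $\EE R\le C_d|\calK|^{-1/(d-1)}$. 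That bound is false: the covering radius of $N$ i.i.d.\ uniform points in a $(d-1)$-dimensional body is of order $(\log N/N)^{1/(d-1)}$, and the logarithm is not removable --- already for $d-1=1$ the maximal spacing of $N$ uniform points in $[0,1]$ is asymptotically $\log N/N$, not $1/N$. The union bound you describe requires $N\epsilon^{d-1}\gtrsim\log(\epsilon^{-(d-1)})$ before the tail probability drops below one, which is exactly where the logarithmic factor enters; integrating that tail yields $C_d(\log|\calK|/|\calK|)^{1/(d-1)}$, which does not prove the lemma as stated.

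The fix is to not take the supremum over $\kappa$ at all. The point $\kappa^\star$ is deterministic given $\theta^i$ (it does not depend on the sample set $\calK$), so what you actually need is the expected distance from a \emph{fixed} point of the simplex to its nearest uniform sample. For a fixed point, $\PP\bigl(\min_{k\in\calK}|\kappa^\star-k|>\epsilon\bigr)=(1-c\,\epsilon^{d-1})^{|\calK|}$ with $c>0$ bounded below uniformly over the simplex (worst case at a vertex), and integrating this tail gives $O(|\calK|^{-1/(d-1)})$ with no logarithm. This is precisely the route the paper takes: it bounds $\EE\bigl[\min_{\kappa\in\calK}|p(\bar\kappa-\kappa)|\bigr]$ by the worst case $p(\bar\kappa)=0$, observes that $|p(\kappa)|_1$ for a uniform sample has a $\mathrm{Beta}(d-1,1)$ law, and evaluates $\int_0^1\PP(|p(\kappa)|_1\ge m)^{|\calK|}\,dm$ explicitly via a Beta integral and Stirling to land on $C_d|\calK|^{-1/(d-1)}$. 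With that substitution your argument goes through.
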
 The proof, together with an explicit expression for $C_d$, can be found in the appendix.

\end{remark}

\begin{remark}
    Note that the maximum is in general not smooth and yet we aim to eventually take the gradient of the loss function which requires the loss to be differentiable. In practice, the maximum is computed as a smooth maximum; that is, a smooth function that approximates a maximum. For prior work applying stochastic gradient descent to a maximized loss function, and comments on its robustness, see \cite{ShwartzW16}.
\end{remark}

\begin{remark} 
It is important to note that computing $\bar H$ explicitly is only possible when the Hamiltonian $H$ is computable or at least approximable. This is true for instance in both numerical examples that we consider later in Section \ref{sec:numerics}. In greater generality, when the running cost function is convex in its control argument, this amounts to solving a convex optimization problem at each time step.
\end{remark}

\subsection{DBME Main Results}

Recalling the definition of $\hat\theta^i$ from \eqref{eqn:loss_def}, define:
\begin{align}\label{eqn:epsilons}
\begin{split}
    \eps_i :=  L_i(\hat \theta^i)\qquad\text{and}\qquad\eps^{\calU} :=  \max_{i\in [N]} \eps_i.
\end{split}
\end{align} 
Note that $\eps_N = 0$ because in the algorithm, we set $\calU_N := g$.  Nonetheless we include $\eps_N$ in \eqref{eqn:epsilons} in order to use the notation $[N]$ instead of $[N-1]$. 



We can now state our two main results with respect to the DBME. Each result is, loosely speaking, converse to the other. The first theorem says that there exist neural networks close to the true solution in our desired class, and when a neural network is close to the true function, the loss from the algorithm is small. 

We recall that $\tilde{T}$ is defined in~\eqref{eqn:U_bound} 
    and that $C_g, C_f$ and $C_F$ are bounds on the maximum norms of $g, f$ and $F$ respectively, and $C_{L,U}$ is a bound on the Lipschitz constant of $U$.

\begin{theorem}[DBME Approximation]\label{thm:dbme_existence}
    Set: 
    \begin{align}\label{eqn:c0c1}
        C_0 := 2 \tilde{T} \qquad\text{ and }\qquad C_1 := 2 C_{L,U}.
    \end{align}  
    For every $\eps >0$, there exist neural networks $(\calU_i^\eps)_{i\in [N]} \subset \calN\calN(C_0,C_1)$  with parameters $(\theta^{i,\eps})_{i\in [N]}$ such that for all $(i,x,\kappa) \in [N]\times [d] \times \calP([d])$:
    \begin{equation}\label{eqn:prop_e}
        |U(t_i,x,\kappa) - \calU_i^\eps(x,\kappa; \theta^{i,\eps})| < \eps.
    \end{equation} 
    Moreover, for any neural network satisfying \eqref{eqn:prop_e}, the DBME algorithm's loss is bounded as:
    \begin{equation}\label{eqn:loss_bound}
        \max_{i\in [N]} L_i(\theta^i) < 2\eps + C|\pi|,
    \end{equation}
    where we recall that $C$ is a constant depending only on the model parameters.
\end{theorem}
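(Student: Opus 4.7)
The plan is two-fold, matching the two halves of the statement. For the existence claim, I would apply the Universal Approximation Theorem \cite[Theorem~3.1]{Hornick90} separately at each time $t_i \in \pi$ to the map $(x,\eta) \mapsto U(t_i, x, \eta)$, which belongs to $\calC^{1,1}([d]\times\calP([d]))$ by Proposition \ref{prop:me_results}. Since Hornik's result provides joint approximation of values and first derivatives in $\eta$, one obtains networks $\calU_i^\eps(x,\eta;\theta^{i,\eps})$ with both $\|\calU_i^\eps - U(t_i,\cdot,\cdot)\|_\iy < \eps$ and $\|D^\eta \calU_i^\eps - D^\eta U(t_i,\cdot,\cdot)\|_\iy < \eps$. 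Combined with $\|U\|_\iy \leq \tilde T$ from \eqref{eqn:U_bound} and the $C_{L,U}$-Lipschitz property of $U$ in $\eta$, this places $\calU_i^\eps$ in $\calN\calN(C_0,C_1)$ for $\eps \leq \min\{\tilde T, C_{L,U}\}$, and establishes \eqref{eqn:prop_e} pointwise.

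For the loss bound, I would fix $(x,\kappa)$ and compare the quantity $\Lambda_i(x,\kappa)$ inside the absolute value defining $L_i(\theta^{i,\eps})$ (reading $\hat\calU_{i+1}$ as $\calU_{i+1}^\eps$) to its exact analogue
\[
\Lambda_i^U(x,\kappa) := U(t_{i+1}, x, \mu^{t_i,\kappa}(t_{i+1})) - U(t_i,x,\kappa) + \Delta t_i \, \bar H(x,\kappa, \Delta_x U(t_i,\cdot,\kappa)).
\]
Integrating the value-function equation in \eqref{eqn:mfg} for the MFG system with initial data $(t_i,\kappa)$ and invoking the consistency relation \eqref{eqn:consistency_relation} yields
\[
U(t_{i+1},x,\mu^{t_i,\kappa}(t_{i+1})) - U(t_i,x,\kappa) = -\int_{t_i}^{t_{i+1}} \bar H(x, \mu^{t_i,\kappa}(s), \Delta_x U(s,\cdot,\mu^{t_i,\kappa}(s)))\,ds,
\]
so that $\Lambda_i^U(x,\kappa)$ is a rectangle-rule discretization error bounded by $C|\pi|^2$ via local Lipschitz continuity of $\bar H$ (applicable because the relevant arguments lie in $[-W,W]$ by \eqref{eqn:interval_restriction}) and the $\calC^1$ regularity of $\mu^{t_i,\kappa}$ and $U$.

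The residual $\Lambda_i - \Lambda_i^U$ I would decompose via telescoping into four pieces. Two are controlled directly by \eqref{eqn:prop_e} and contribute at most $\eps$ each: the gap $|\calU_{i+1}^\eps - U(t_{i+1},\cdot,\cdot)|$ evaluated at $(x,M_i^{\theta^{i,\eps}}(\kappa))$, and $|\calU_i^\eps - U(t_i,\cdot,\cdot)|$ evaluated at $(x,\kappa)$. A third piece, $\Delta t_i \big|\bar H(x,\kappa, \Delta_x \calU_i^\eps(\cdot,\kappa;\theta^{i,\eps})) - \bar H(x,\kappa, \Delta_x U(t_i,\cdot,\kappa))\big|$, is $O(\eps|\pi|)$ by the local Lipschitz continuity of $D_p H$, applicable since the relevant $p$-arguments all lie in $[-W,W]$ thanks to Remark \ref{rmk:nn_bound}.

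The main obstacle is the fourth piece, $|U(t_{i+1},x, M_i^{\theta^{i,\eps}}(\kappa)) - U(t_{i+1},x,\mu^{t_i,\kappa}(t_{i+1}))|$, which after invoking the $C_{L,U}$-Lipschitz continuity of $U$ in $\eta$ reduces to controlling $|\rho_i^{\theta^{i,\eps},\kappa}(t_{i+1}) - \mu^{t_i,\kappa}(t_{i+1})|$. These two flows share the initial condition $\kappa$ but are driven by $\gamma^*$ applied respectively to $\Delta_y \calU_i^\eps(\cdot,\rho_i^{\theta^{i,\eps},\kappa}(s);\theta^{i,\eps})$ and $\Delta_y U(s,\cdot,\mu^{t_i,\kappa}(s))$. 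Splitting the rate-matrix gap into a time-regularity part in $U$ (of order $\Delta t_i$ from $\calC^1$ regularity in $t$), a network-approximation part (of order $\eps$), and a distribution-feedback part (proportional to the gap itself), the local Lipschitz property of $\gamma^*$ from Remark \ref{rem:gammastar-Lip} feeds a Gronwall estimate on $[t_i,t_{i+1}]$ yielding $|M_i^{\theta^{i,\eps}}(\kappa) - \mu^{t_i,\kappa}(t_{i+1})| \leq C(\eps + |\pi|)|\pi|$. Combining all four pieces with the bound on $|\Lambda_i^U|$ gives $|\Lambda_i(x,\kappa)| \leq 2\eps + C\eps|\pi| + C|\pi|^2$, and absorbing the lower-order cross terms into $C|\pi|$ (the claim being trivial for $\eps$ large) yields \eqref{eqn:loss_bound} uniformly in $i\in[N]$.
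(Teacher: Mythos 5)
Your proposal is correct and follows essentially the same route as the paper: universal approximation in the $\calC^{1,1}$ norm for the existence half, and for the loss bound, the integrated backward MFG equation combined with the consistency relation \eqref{eqn:consistency_relation}, Lipschitz estimates on the networks and on $\bar H$, and a Gronwall comparison of the flows $\rho_i^{\theta^{i,\eps},\kappa}$ and $\mu^{t_i,\kappa}$. The only differences are cosmetic: you re-derive the measure comparison inline (the paper packages it as Lemma~\ref{lemma:duality_bound} and then coarsens it to $|\rho_i^{\theta^{i,\eps},\kappa}(t_{i+1})-\mu^{t_i,\kappa}(t_{i+1})|\le C|\pi|$), and your rectangle-rule bound $C|\pi|^2$ on the Hamiltonian discretization term is sharper than the paper's $2C_{\bar H}|\pi|$, though neither refinement changes the final estimate.
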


The second theorem provides an upper bound on the difference between the neural networks and the true master equation solution in terms of the algorithm's error. Intuitively, it says that any network trained by the algorithm that minimizes the loss is in fact close to the true master equation solution. 

\begin{theorem}[DBME Convergence]\label{thm:DBME} 
There exist $N_0>0$ and $C>0$ depending only on the problem's data with the following property. For every $N>N_0$ and for every $i=0,\dots,N-1$, letting $\hat\calU_i = \calU_i(\cdot,\cdot; \hat\theta^i)$ denote a neural network with parameters $\hat\theta^i$ minimizing the loss  in the DBME Algorithm \ref{alg:DBME}, we have:
\begin{align*}
    \max_{(i, x, \eta) \in[N]\times [d] \times \calP([d])} |U(t_i,x,\eta) - \hat\calU_i(x, \eta)| \leq C \left(\tfrac{1}{N} + N^{} \eps^{\calU}\right).
\end{align*}
\end{theorem}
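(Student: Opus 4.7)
The plan is to construct an approximate MFG system from the trained networks and compare it to the true MFG system via Lasry--Lions duality. Fix $t_i\in\pi$ and $\eta\in\calP([d])$. Define an approximate mean-field flow $\hat\mu$ on $[t_i,T]$ by $\hat\mu(t_i)=\eta$ and, on each subinterval $[t_k,t_{k+1}]$ for $k=i,\dots,N-1$, letting $\hat\mu$ solve the Kolmogorov ODE~\eqref{eqn:M_theta_i_def} with the trained parameters $\hat\theta^k$, so that $\hat\mu(t_{k+1})=M^{\hat\theta^k}_k(\hat\mu(t_k))$. Define an approximate value $\hat u(t_k,x):=\hat\calU_k(x,\hat\mu(t_k))$. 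Since $\hat u(t_i,x)=\hat\calU_i(x,\eta)$ and $u^{t_i,\eta}(t_i,x)=U(t_i,x,\eta)$ by~\eqref{eqn:U_def}, it suffices to bound $|\hat u(t_i,x)-u^{t_i,\eta}(t_i,x)|$ uniformly in $(i,x,\eta)$. By the definition of $\hat\theta^k$ in~\eqref{eqn:loss_def}, taking $\kappa=\hat\mu(t_k)$ yields, for every $k=i,\dots,N-1$ and every $x\in[d]$,
\[
    \bigl|\hat u(t_{k+1},x)-\hat u(t_k,x)+\Delta t_k\,\bar H\bigl(x,\hat\mu(t_k),\Delta_x\hat u(t_k,\cdot)\bigr)\bigr|\leq\eps^{\calU}.
\]
Combined with the forward ODE for $\hat\mu$, this makes $(\hat u,\hat\mu)$ a discrete forward--backward system that mimics~\eqref{eqn:mfg} up to residuals $\eps^{\calU}$ per step on the backward side and $O(|\pi|)$ from the piecewise ODE structure on the forward side.

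The heart of the proof is an MFG duality argument applied to the cross-pairing
\[
    \Phi(t_k):=\sum_{x\in[d]}\bigl[\hat u(t_k,x)-u^{t_i,\eta}(t_k,x)\bigr]\bigl[\hat\mu(t_k,x)-\mu^{t_i,\eta}(t_k,x)\bigr].
\]
I would compute the discrete time differences of $\Phi$, substitute the backward and forward equations for both systems, and telescope. The Lasry--Lions monotonicity~\eqref{eqn:LL-monotone} of $F$ and $g$ renders the mean-field cost contributions non-negative, while the strict concavity $D^2_{pp}H\leq-C_{2,H}$ from~\eqref{eqn:H_regularity}---uniformly available thanks to the truncation argument of Remark~\ref{rmk:nn_bound}---produces a negative quadratic term in the differences of the optimal rates. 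After rearrangement, this yields a bound of the form
\[
    C_{2,H}\sum_{k=i}^{N-1}\Delta t_k\sum_{y\in[d]}\mu^{t_i,\eta}(t_k,y)\bigl|\gamma^*\bigl(y,\Delta_y\hat u(t_k,\cdot)\bigr)-\gamma^*\bigl(y,\Delta_y u^{t_i,\eta}(t_k,\cdot)\bigr)\bigr|^{2}\leq C\bigl(N\eps^{\calU}+|\pi|\bigr),
\]
where the $N\eps^{\calU}$ factor arises because the per-step backward residual $\eps^{\calU}$ is summed over $N$ steps.

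To finish, I would convert this energy estimate into a sup-norm bound. The local Lipschitz continuity of $\gamma^*$ (Remark~\ref{rem:gammastar-Lip}) together with a discrete Gronwall argument on the forward equation upgrades the rate estimate to $\max_k|\hat\mu(t_k)-\mu^{t_i,\eta}(t_k)|\leq C(1/N+N\eps^{\calU})$. Substituting back into the discrete backward equation, iterating from $t_N=T$, and using the $\calC^{1,1}$-regularity of $U(\cdot,x,\cdot)$ from Proposition~\ref{prop:me_results} to control the Riemann-sum errors yields the claimed estimate $\max_x|\hat u(t_i,x)-u^{t_i,\eta}(t_i,x)|\leq C(1/N+N\eps^{\calU})$ uniformly in $(i,\eta)$. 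The main obstacle is the duality step: a naive Gronwall argument applied to the per-step residual $\eps^{\calU}$ would yield an error exponential in $N\eps^{\calU}$, which is useless; the combination of Lasry--Lions monotonicity with the strict concavity of $H$ is essential to keep the bound linear in $N\eps^{\calU}$. A secondary technical difficulty lies in bridging the discrete approximate system with the continuous true MFG system, which accounts for the additional $O(1/N)$ term and the smallness requirement $N>N_0$ in the statement.
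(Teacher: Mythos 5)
Your route is genuinely different from the paper's. The paper proves Theorem~\ref{thm:DBME} \emph{without} any duality: it works interval by interval, restarting the Kolmogorov flow $\rho_i$ from the true equilibrium measure $\mu^\eta(t_i)$ on each $[t_i,t_{i+1}]$, compares $\rho_i$ to $\mu^\eta$ via the purely Lipschitz/Gronwall estimate of Lemma~\ref{lemma:duality_bound}, and then runs a backward induction in which the error at step $i$ is bounded by $Q$ times the error at step $i+1$ plus $C|\pi|^2+\eps_i$, with $Q=1+O(|\pi|)$; summing the geometric series $\sum_j Q^j\le CNe^{4\Gamma_1}$ gives exactly $C(|\pi|+N\eps^{\calU})$. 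In particular, the paper's argument uses neither Lasry--Lions monotonicity nor the concavity $D^2_{pp}H\le -C_{2,H}$ for this theorem. Your proposal instead transplants the paper's DGME duality proof (Theorem~\ref{thm:dgm_convergence}) to the discrete setting, with a single concatenated flow $\hat\mu$ and a discrete cross-pairing $\Phi$. This can be made to work: the discretization errors in the duality identity all enter multiplied by $\scrM$ or $\scrQ$ factors, so after the Cauchy--Schwarz/Young step one still lands on $\|\scrM\|_\iy+\|\scrQ\|_\iy\le C(N\eps^{\calU}+|\pi|)$ rather than an unwanted $|\pi|^{1/2}$. What your approach buys is a global control of the concatenated measure trajectory; what it costs is a substantially heavier discrete-duality bookkeeping and the extra structural hypotheses (monotonicity of $F,g$ and strict concavity of $H$) that the paper's proof of this particular theorem does not need.

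One side remark in your proposal is wrong and worth correcting: you claim that ``a naive Gronwall argument applied to the per-step residual $\eps^{\calU}$ would yield an error exponential in $N\eps^{\calU}$.'' It does not. The discrete Gronwall product is $\prod_k(1+C\Delta t_k)\le e^{CT}$, a constant depending only on the data, and the accumulated residual is $\sum_k(\eps_k+C|\pi|^2)\le N\eps^{\calU}+C|\pi|$; the exponential sits in front as a fixed constant, not as $e^{N\eps^{\calU}}$. This ``naive'' argument is essentially the paper's actual proof, so the duality machinery, while valid, is not needed here.
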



\begin{remark}
    We note that the order of this error---that is, $\calO(N^{-1})$ in one term and $\calO(N \eps^{\calU})$ in the other---is like the error bound from \cite{MR4081911}, but with a smaller order of $N$ in the second term. The larger order of $N$ in \cite{MR4081911} likely comes from their use of random propagation in the algorithm and corresponding fixed point system. In our case, the propagation is deterministic and the error is defined differently. 
\end{remark}

\begin{remark}
    To minimize the right hand side of the theorem, one first selects the number $N$ of time steps large enough to minimize the left error term. Then one selects the number $\bar\delta$ of parameters large enough so that $\eps^{\calU}$ is small enough. Notice that the definition of $\eps^{\calU}$ takes into account the approximation error but not the training error (which stems from the optimization method used, such as SGD or Adam). However, we can expect that with a large enough number of training steps, the realized error can also be made small enough.
\end{remark}

\subsection{Auxiliary Lemma and Proof}

In this section we prove a measure bound lemma that comes into play in the proofs of Theorem~\ref{thm:dbme_existence} and Theorem \ref{thm:DBME}. For every $i\in\{0,\ldots,N-1\}$, we denote $(u^\kappa_i, \mu^{\kappa}_i) = (u^{t_i,\kappa}, \mu^{t_i,\kappa})$ as the solution to \eqref{eqn:mfg} on $[t_i,T]$ with $\mu^{\kappa}_i(t_i) = \kappa \in \calP([d])$. In the following sections, we write $\|\cdot\|_{i,\iy}$ to mean the sup-max norm taken over all $t\in [t_i, t_{i+1}]$ and all $x\in [d]$, that is: $\|\varphi\|_{i,\iy} = \sup_{t\in [t_i, t_{i+1}]} \max_{x \in [d]}|\varphi(t,x)|$, where $\varphi: [0,T] \times [d] \to \mathbb{R}$.

\begin{lemma}[Measure Bound]\label{lemma:duality_bound} There exists $C>0$ depending only on the problem data such that for all $|\pi|$ small enough and every $\theta\in\Theta$ and $(i,\kappa) \in [N] \times \calP([d])$, 
\begin{align}\label{eqn:sus_rho}
    \|\rho_i^{\theta, \kappa} - \mu^{\kappa}_i \|_{i,\iy} \leq \frac{C|\pi|}{1- C|\pi|} \| \calU_i(\cdot,\rho_i^{\theta, \kappa}(\cdot) ; \theta)  - U(\cdot, \cdot, \mu^\kappa_i(\cdot))\|_{i,\iy}.
\end{align} 
\end{lemma}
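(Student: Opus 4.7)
The plan is to run a Grönwall-type argument on the difference $\delta(t) := \rho_i^{\theta,\kappa}(t) - \mu_i^\kappa(t)$, using the fact that both quantities satisfy forward Kolmogorov equations on $[t_i,t_{i+1}]$ starting from the same initial condition $\kappa$. Recall from Proposition~\ref{prop:me_results} that $\mu_i^\kappa$ satisfies~\eqref{eqn:mfg} with rates $\gamma^*(y, \Delta_y u_i^\kappa(t,\cdot))$, and by the consistency relation~\eqref{eqn:consistency_relation} we may rewrite these rates as $\gamma^*(y, \Delta_y U(t,\cdot, \mu_i^\kappa(t)))$. Meanwhile $\rho_i^{\theta,\kappa}$ satisfies~\eqref{eqn:M_theta_i_def} with rates $\gamma^*(y, \Delta_y \calU_i(\cdot, \rho_i^{\theta,\kappa}(t); \theta))$. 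Subtracting the two ODEs gives an integral equation for $\delta$.

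The main step is the standard add-and-subtract trick: insert the intermediate quantity $\rho_i^{\theta,\kappa}(t,y) \gamma^*_x(y, \Delta_y U(t,\cdot,\mu_i^\kappa(t)))$ into the integrand so that the right-hand side splits into two pieces. The first piece is $\sum_y [\rho_i^{\theta,\kappa}(t,y) - \mu_i^\kappa(t,y)] \gamma^*_x(y, \Delta_y U(t,\cdot,\mu_i^\kappa(t)))$, which is bounded by $C|\delta(t)|$ using the uniform bound on $\gamma^*$ (since rates lie in the compact set $\A$). The second piece is $\sum_y \rho_i^{\theta,\kappa}(t,y)\bigl[\gamma^*_x(y, \Delta_y \calU_i(\cdot, \rho_i^{\theta,\kappa}(t); \theta)) - \gamma^*_x(y, \Delta_y U(t,\cdot,\mu_i^\kappa(t)))\bigr]$, which, by Remark~\ref{rem:gammastar-Lip} (local Lipschitz continuity of $\gamma^*$, applicable since both arguments lie in $[-W,W]$ by Remark~\ref{rmk:nn_bound} and~\eqref{eqn:U_bound}), is bounded by $C\,|\calU_i(\cdot, \rho_i^{\theta,\kappa}(t); \theta) - U(t,\cdot, \mu_i^\kappa(t))|$.

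Integrating from $t_i$ to $t\in [t_i,t_{i+1}]$ and using $\delta(t_i)=0$, this yields
\[
    |\delta(t,x)| \leq C\int_{t_i}^t |\delta(s)|\,ds + C(t-t_i)\, \|\calU_i(\cdot, \rho_i^{\theta,\kappa}(\cdot); \theta) - U(\cdot,\cdot,\mu_i^\kappa(\cdot))\|_{i,\iy}.
\]
Taking the supremum-max over $(t,x)\in [t_i,t_{i+1}]\times [d]$ on both sides gives
\[
    \|\delta\|_{i,\iy} \leq C|\pi|\,\|\delta\|_{i,\iy} + C|\pi|\,\|\calU_i(\cdot, \rho_i^{\theta,\kappa}(\cdot); \theta) - U(\cdot,\cdot,\mu_i^\kappa(\cdot))\|_{i,\iy}.
\]
Finally, for $|\pi|$ small enough that $C|\pi| < 1$, I rearrange to obtain~\eqref{eqn:sus_rho}.

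The only mild subtlety, rather than a real obstacle, is justifying that $\gamma^*$ may be treated as Lipschitz in the relevant range: one must check that $\Delta_y \calU_i(\cdot, \rho_i^{\theta,\kappa}(t); \theta)$ lies in $[-W,W]$, which follows from the truncation $\calU_i\wedge \tilde T$ built into the algorithm (see Remark~\ref{rmk:nn_bound}), while $\Delta_y U(t,\cdot,\mu_i^\kappa(t))$ lies in the same range by~\eqref{eqn:U_bound}. Everything else is a direct Grönwall-style estimate; no comparison principle or PDE machinery is needed.
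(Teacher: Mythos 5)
Your proposal is correct and follows essentially the same route as the paper: write the difference as an integral equation, add and subtract an intermediate rate term, bound one piece via boundedness of $\gamma^*$ and the other via its (local) Lipschitz continuity on $[-W,W]$, then take the sup over $[t_i,t_{i+1}]\times[d]$ and absorb the $C|\pi|\|\rho-\mu\|_{i,\iy}$ term for $|\pi|$ small. The only (immaterial) difference is the symmetric choice in the splitting — you weight the rate difference by $\rho$ and keep $\gamma^*(\cdot,\Delta_\cdot U)$ on the measure difference, whereas the paper weights the rate difference by $\mu$ and keeps $\gamma^*(\cdot,\Delta_\cdot \calU_i)$ — and the paper invokes the consistency relation at the end rather than at the start.
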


\begin{proof}
Recall the definition of the loss function in \eqref{eqn:loss_def}. 
For every $\theta \in \Theta$, define the mapping $e_i(\cdot,\cdot;\theta):[d]\times\calP([d])\to\R$ by: 
\begin{align}\label{eq:e_i}
    e_i(x,\kappa ; \theta) := -\hat\calU_{i+1} (x,M^{\theta}_i(\kappa)) + \calU_i(x, \kappa ; \theta) - (\Delta t_i) \bar H(x,\kappa, \Delta_{x} \calU_i(\cdot,\kappa ; \theta)).
\end{align} 
Recalling \eqref{eqn:M_theta_i_def}, $M^{\theta}_i(\kappa) = \rho_i^{\theta, \kappa}(t_{i+1})$, \eqref{eq:e_i} rewrites: 
\begin{align}
\label{eq:U_i}
        &\hat\calU_{i+1}(x,\rho_i^{\theta,\kappa}(t_{i+1})) - \calU_i(x,\kappa ; \theta) + (\Delta t_i)\bar H (x,\kappa,\Delta_x \calU_i(\cdot,\kappa ; \theta))+e_i(x,\kappa ; \theta)=0.
\end{align} 
Note that we initialized the DBME algorithm with $\hat \calU_N(x,\eta)=g(x,\eta)$; also, going backwards with $i=N-1,\ldots,0$, on each time interval $[t_i,t_{i+1}]$, we may think about \eqref{eq:U_i} as a  backward finite-difference equation, with $\hat\calU_{i+1}(x,\rho_i^{\theta,\kappa}(t_{i+1}))$ as the terminal condition.
We refer to  \eqref{eq:U_i} and \eqref{eqn:M_theta_i_def} together as a forward-backward system.


The difference $\rho_i - \mu^\kappa$ solves an ODE that after integrating yields: for every $s \in [t_i, t_{i+1}]$, 
\begin{align*}
    \rho_i^{\theta, \kappa}(s,x) - \mu^\kappa_i(s,x) &= \int_{t_i}^{s} \Big[\sum_{y\in [d]} (\rho_i^{\theta, \kappa}(t,y) - \mu^\kappa_i(t,y)) \gamma^*_x(y,\Delta_y \calU_i(\cdot,\rho_i^{\theta,\kappa}(t) ; \theta))  \\
    &\qquad\qquad\quad + \sum_{y\in [d]} \mu^\kappa_i(t,y) \big[ \gamma^*_x(y,\Delta_y \calU_i(\cdot,\rho_i^{\theta, \kappa}(t) ; \theta)) - \gamma^*_x(y,\Delta_y u^\kappa_i(t,\cdot))\big] \Big]dt .
\end{align*} 
Since $\gamma^*$ is bounded and locally Lipschitz (see Remark~\ref{rem:gammastar-Lip}), for every $s \in [t_i, t_{i+1}]$ 
\begin{align*}
    |\rho_i^{\theta, \kappa}(s,x) - \mu^\kappa_i(s,x)| &\leq \int_{t_i}^s \Big[ dC_{\gamma^*} \max_{y\in [d]}  |\rho_i^{\theta, \kappa}(t,y) -\mu^\kappa_i(t,y)| \\
    &\qquad\qquad + dC_{L,\gamma^*} \max_{y\in [d]} |\Delta_y (\calU_i(\cdot,\rho_i^{\theta, \kappa}(t) ; \theta) -  u^\kappa_i(t,\cdot))| \Big] dt, 
\end{align*} 
where we recall that $C_{\gamma^*}, C_{L,\gamma^*}$ depend only on the model parameteres and not on the neural network or the time discretization. Taking the supremum over all $(t,x) \in [t_{i},t_{i+1}] \times [d]$ for the last term in the above display and a maximum over the state in the left hand side yields:
\begin{align*}
    \max_{y\in [d]}|\rho_i^{\theta, \kappa}(s,y) - \mu^\kappa_i(s,y)| &\leq C \int_{t_i}^s \max_{y\in [d]}  |\rho_i^{\theta, \kappa}(t,y) -\mu^\kappa_i(t,y)| dt \\
    &\quad + C |\pi| \|\calU_i(\cdot,\rho_i^{\theta, \kappa}(\cdot) ; \theta) -  u^\kappa_i(\cdot,\cdot))\|_{i,\iy} . 
\end{align*} 
Again taking another supremum bound with respect to $s$ and combining like terms, and then using the consistency property \eqref{eqn:consistency_relation}, we obtain \eqref{eqn:sus_rho}.

\end{proof}

\subsection{Proof of Theorem~\ref{thm:dbme_existence}} 
We recall that $C$ is a generic, positive constant that depends only on the problem's parameters and in particular is independent of $\eps$ and $|\pi|$.  We allow $C$ to change from one line to the next as needed.

Recall the parameters $C_0$ and $C_1$ from \eqref{eqn:c0c1}. We notice that $U \in \calC^{1,1} ([d] \times \calP([d]) ; C_0/2, C_1/2)$ (see \eqref{eqn:dbme_dense_space}). Indeed, by \eqref{eqn:U_bound}, $\|U\|_\infty \leq \tilde T = C_0/2.$
Due to Proposition \ref{prop:me_results}, we know that $U$ is Lipschitz with Lipschitz constant $C_{L,U}$ (in fact, $D^\eta U$ is Lipschitz). So, $\|D^\eta U\|_\infty \le C_1/2.$
Now, by \cite[Theorem~3.1]{Hornick90}, the Universal Approximation Theorem asserts that $\calN\calN$ is dense in $\calC^{1,1}([0,T] \times \calP([d]))$: for any $\eps>0$, there exists $\calU \in \calN\calN$ such that $\|U - \calU\|_{\calC^{1,1}} < \epsilon$. This neural network satisfies: $\|\calU\|_\infty \le C_0/2 + \eps$ and $\|D^\eta \calU\|_\infty \le C_1/2 + \eps$. For $\eps< \min\{C_0/2, C_1/2\}$, we deduce that $\|\calU\|_\infty \le C_0$ and $\|D^\eta \calU\|_\infty \le C_1$, so it belongs to the class $\calN\calN(C_0,C_1)$. 

Next, we show \eqref{eqn:loss_bound}. Since Lemma \ref{lemma:duality_bound} is proved in greater generality, we will use it here for $\theta = \theta^{i,\eps}$.  We will denote \begin{align}\label{eqn:e_hat}
    e_i^\eps (x, \kappa) := e_i(x,\kappa;\theta^{i,\eps}).
\end{align}

Let $(x,\kappa) \in [d] \times \calP([d])$. Let $s \in [t_i, t_{t+1}]$. By integrating \eqref{eqn:mfg} and by~\eqref{eq:U_i}, along with \eqref{eqn:consistency_relation},
\begin{equation}
\label{eq:U-hatU-expression}
\begin{split}
    U(s,x,\mu^\kappa_i(s)) - \calU^\eps_i(x,\kappa) &= U(t_{i+1},x,\mu^\kappa_i(t_{i+1})) - \calU^\eps_{i+1}(x,\rho_i^{\theta^{i,\eps}, \kappa}(t_{i+1})) \\
        &\quad + \int_s^{t_{i+1}}  \bar H (x,\mu^\kappa_i(t), \Delta_x U(t,\cdot,\mu^\kappa_i(t))) dt \\
        &\quad- \Delta t_i \bar H(x,\kappa, \Delta_x \calU^\eps_i(\cdot,\kappa))  \\
        &\quad + e_i^\eps(x,\kappa).
\end{split}
\end{equation} 
Solving for $e_i^\eps$ above, and using triangle inequality, yields:
\begin{align*}
    |e_i^\eps(x,\kappa)| &\leq |U(s,x,\mu^\kappa_i(s)) - \calU^\eps_i(x,\kappa)| \\
        &\quad + |\calU^\eps_{i+1}(x,\rho_i^{\theta^{i,\eps}, \kappa}(t_{i+1})) - U(t_{i+1},x,\mu^\kappa_i(t_{i+1}))|\\
        &\quad + \Big|\int_s^{t_{i+1}} \big[ \bar H (x,\mu^\kappa_i(t), \Delta_x U(t,\cdot,\mu^\kappa_i(t)))\big]dt - \Delta t_i \bar H(x,\kappa, \Delta_x \calU^\eps_i(\cdot,\kappa))\Big|.
\end{align*}
We are going to bound each term of the right hand side, using the Lipschitz continuity of $\calU^\eps_{i+1},$ the triangle inequality again, Lemma~\ref{lemma:duality_bound} and the Lipschitz continuity of $\calU^\eps_i$, the fact (by integrating in time the forward equation and using the fact that $U$ is bounded) that $|\mu^{\kappa}_i (s) - \kappa| \leq C|\pi|$, the boundedness of $\bar H$, and \eqref{eqn:prop_e}.

For the first term,
\begin{align*}
    |U(s,x,\mu^\kappa_i(s)) - \calU^\eps_i(x,\kappa)|
    &\le |U(s,x,\mu^\kappa_i(s)) - \calU^\eps_i(x,\mu^\kappa_i(s))|
        + |\calU^\eps_i(x,\mu^\kappa_i(s)) - \calU^\eps_i(x,\kappa)|
    \\
    &\le \eps + C C_{L,\calU^\eps_i}|\pi| .
\end{align*}
For the second term,
\begin{align*}
    &|\calU^\eps_{i+1}(x,\rho_i^{\theta^{i,\eps}, \kappa}(t_{i+1})) - U(t_{i+1},x,\mu^\kappa_i(t_{i+1}))|
    \\
    &\le |\calU^\eps_{i+1}(x,\rho_i^{\theta^{i,\eps}, \kappa}(t_{i+1})) - \calU^\eps_{i+1}(x,\mu^\kappa_i(t_{i+1}))|
        + |\calU^\eps_{i+1}(x,\mu^\kappa_i (t_{i+1})) - U(t_{i+1},x,\mu^\kappa_i(t_{i+1}))|
    \\
    &\le C_{L,\calU^\eps_{i+1}}|\rho_i^{\theta^{i,\eps}, \kappa}(t_{i+1}) - \mu^\kappa_i(t_{i+1})| 
        + |\calU^\eps_{i+1}(x,\mu^\kappa_i (t_{i+1})) - U(t_{i+1},x,\mu^\kappa_i(t_{i+1}))|
    \\
    &\le C_{L,\calU^\eps_{i+1}}|\rho_i^{\theta^{i,\eps}, \kappa}(t_{i+1}) - \mu^\kappa_i(t_{i+1})|
        + |\calU^\eps_{i+1}(x,\mu^\kappa_i (t_{i+1})) - U(t_{i+1},x,\mu^\kappa_i(t_{i+1}))|
    \\
    &\le C C_{L,\calU^\eps_{i+1}} |\pi| + \eps.
\end{align*}
For the third term, 
\begin{align*}
    &\Big|\int_s^{t_{i+1}} \big[ \bar H (x,\mu^\kappa_i(t), \Delta_x U(t,\cdot,\mu^\kappa_i(t)))\big]dt - \Delta t_i \bar H(x,\kappa, \Delta_x \calU^\eps_i(\cdot,\kappa))\Big|
    \le 2C_{\bar H}|\pi|.
\end{align*}
Combining these bounds, we obtain:
\[
    |e_i^\eps(x,\kappa)| \le C(\eps + |\pi|).
\]

Now we can take a maximum over $(x,\kappa) \in [d]\times \calP([d])$, and with \eqref{eqn:loss_def}, this yields \eqref{eqn:loss_bound}.

\qed


\subsection{Proof of Theorem \ref{thm:DBME}}

Fix $\eta \in \calP([d])$. Recall \eqref{eqn:M_theta_i_def}. For any $i\in[N]$ and any $\hat\theta^i$ minimizing $L_i$ in~\eqref{eqn:loss_def}, set
\[
    \rho_i:=\rho^{\hat\theta^i,\mu^\eta(t_i)}_i.
\]
Note that for any $\theta^i$, 
\begin{align*}
    L_i(\theta^i)
    &=\max_{x,\kappa} e_i(x,\kappa; \theta^i) 
    \\
    &\ge \max_{x\in [d]} e_i(x, \mu^n(t_i); \theta^i)
    \\
    &= \max_{x\in [d]} \big|\hat\calU_{i+1} (x,M^{\theta^i}_i(\mu^\eta (t_i))) - \calU_i(x,\mu^\eta (t_i); \theta^i) + (\Delta t_i) \bar H(x,\mu^\eta (t_i), \Delta_{x} \calU_i(\cdot,\mu^\eta (t_i);\theta^i))\big|.
\end{align*} 
So when $\theta^i = \hat \theta^i$, 
\begin{align}\label{eqn:pre_inequality_eps}
    \eps_i \geq \max_{x\in [d]} \big|\hat\calU_{i+1} (x,M^{\hat\theta^i}_i(\mu^\eta (t_i)) ) - \hat \calU_i(x,\mu^\eta (t_i)) + (\Delta t_i) \bar H(x,\mu^\eta (t_i), \Delta_{x} \hat \calU_i(\cdot,\mu^\eta (t_i)))\big|,
\end{align} 
where we recall $\eps_i$ is defined in \eqref{eqn:epsilons}. Like in \eqref{eqn:pre_inequality_eps}, by the definition of the loss \eqref{eq:e_i} and \eqref{eqn:e_hat}, $\|e_i^\eps\|_\iy = \eps_i$.

Once more integrating \eqref{eqn:mfg},  by~\eqref{eq:U_i}, and using \eqref{eqn:consistency_relation},
\begin{equation}
\label{eq:U-hatU-expression-eta}
\begin{split}
    U(s,x,\mu^\eta(s)) - \hat\calU_i(x,\mu^\eta(t_i)) &= U(t_{i+1},x,\mu^\eta(t_{i+1})) - \hat\calU_{i+1}(x,\rho_i(t_{i+1})) \\
        &\quad + \int_s^{t_{i+1}} \big[ \bar H (x,\mu^\eta(t), \Delta_x U(t,\cdot,\mu^\eta(t)))\big]dt \\
        &\quad- \Delta t_i \bar H(x,\mu^\eta(t_i), \Delta_x \hat\calU_i(\cdot,\mu^\eta(t_i)))  \\
        &\quad + \hat e_i(x,\mu^\eta(t_i)).
\end{split}
\end{equation} 
Furthremore, by the Lipschitz continuity of $H$,
\begin{align*}
        &\Big| \int_{s}^{t_{i+1}} \big[ \bar H (x,\mu^\eta(t), \Delta_x U(t,\cdot,\mu^\eta(t)))\big] dt - \Delta t_i \bar H(x,\mu^\eta(t_i), \Delta_x \hat\calU_i(\cdot,\mu^\eta(t_i))\Big|  \\
        &\leq C_{L,\bar H}\Delta t_i \|\mu^\eta - \mu^\eta(t_i)\|_{i,\iy}
        + C_{L,\bar H} C\Delta t_i\|U(\cdot,\cdot, \mu^\eta(\cdot)) - \hat\calU_i(\cdot,\mu^\eta(t_i))\|_{i,\iy}.
\end{align*} 
Going back to~\eqref{eq:U-hatU-expression-eta},
 taking the supremum over $(s,x) \in [t_i,t_{i+1}] \times [d]$, using the fact that $\|\mu^\eta(\cdot) - \mu^\eta(t_i)\|_{i,\iy} \leq C \Delta t_i$, and combining like terms,
\begin{align*}
    &\|U(\cdot,\cdot, \mu^\eta(\cdot)) - \hat\calU_i(\cdot,\mu^\eta(t_i))\|_{i,\iy} \\
    &\qquad\leq \tfrac{1}{1-C|\pi|} \Big[ \max_{y\in [d]} |U(t_{i+1},y,\mu^\eta(t_{i+1})) - \hat\calU_{i+1}(y,\rho_i(t_{i+1}))|  + C|\pi|^2 + \max_{y\in [d]}|\hat{e}_i(y,\mu^\eta(t_i))|\Big].
\end{align*} Note that the above inequality requires $C|\pi|<1$, which is true for $N$ large enough, as we assume in the statement. 
Using the triangle inequality,
\begin{equation}\label{eq:U-hatUi-triangle}
\begin{split}
&\|U(\cdot,\cdot,\mu^\eta(\cdot)) - \hat\calU_i(\cdot,\mu^\eta(t_i))\|_{i,\iy}\\ 
    &\quad\leq \tfrac{1}{1- C|\pi|} \Big[C|\pi|^2 + \max_{y\in [d]}|U(t_{i+1},y,\mu^\eta(t_{i+1})) - \hat\calU_{i+1} (y,\mu^\eta(t_{i+1}))|  \\
        &\qquad\qquad\qquad + \max_{y\in [d]}|\hat\calU_{i+1} (y,\mu^\eta(t_{i+1})) - \hat\calU_{i+1} (y,\rho_i(t_{i+1}))|  + \max_{y\in [d]}|\hat{e}_i(y,\mu^\eta(t_i))|\Big].
\end{split}
\end{equation}

Next, we will use the fact that $\hat\calU_{i+1}$ is Lipschitz, \eqref{eqn:sus_rho}, triangle inequality, the fact that $\hat\calU_i$ is Lipschitz, that $\|\rho_i(\cdot) - \mu^\eta(\cdot)\|_{i,\iy}\leq C|\pi|$, and the fact that 
\[
    \text{the map }\qquad
    (0,|\pi|^{-1})\ni C\mapsto\frac{C}{1-C|\pi|} \qquad\text{is nondecreasing},
\] 
in order to update the generic constant $C>0$.

We note that:
\begin{align*}
    &\max_{y\in [d]}|\hat\calU_{i+1} (y,\mu^\eta(t_{i+1})) - \hat\calU_{i+1} (y,\rho_i(t_{i+1}))|
    \\
    &\le C_{L, \hat \calU_{i+1}} |\mu^\eta(t_{i+1}) - \rho_i(t_{i+1} )| 
    \\
    &\le \tfrac{C|\pi|}{1-C|\pi|} \|\hat\calU_i(\cdot,\rho_i(\cdot)) - U(\cdot,\cdot, \mu^\eta(\cdot))\|_{i,\iy} 
    \\
    &\le \tfrac{C|\pi|}{1-C|\pi|}\|\hat\calU_i(\cdot,\rho_i(\cdot)) - \hat\calU_i(\cdot,\mu^\eta(t_i))\|_{i,\iy} 
      + \tfrac{C|\pi|}{1-C|\pi|} \|U(\cdot,\cdot,\mu^\eta(\cdot)) - \hat\calU_i(\cdot,\mu^\eta(t_i))\|_{i,\iy} 
    \\
    &\le \tfrac{C|\pi|^2}{1-C|\pi|}C_{L,\hat\calU_i}
     + \tfrac{C|\pi|}{1-C|\pi|} \|U(\cdot,\cdot,\mu^\eta(\cdot)) - \hat\calU_i(\cdot,\mu^\eta(t_i))\|_{i,\iy},
\end{align*}
where we used the fact that $C_{L,\hat\calU_i}$ is bounded by a constant depending only on the model's parameters.

Going back to~\eqref{eq:U-hatUi-triangle} and using the above bound, we obtain:
\begin{align*}    
    &\|U(\cdot,\cdot,\mu^\eta(\cdot)) - \hat\calU_i(\cdot,\mu^\eta(t_i))\|_{i,\iy}
    \\ 
    &\quad\leq \tfrac{1}{1- C|\pi|} \Big[C|\pi|^2 + \max_{y\in [d]}|U(t_{i+1},y,\mu^\eta(t_{i+1})) - \hat\calU_{i+1} (y,\mu^\eta(t_{i+1}))|  
    \\
    &\qquad\qquad\qquad + \max_{y\in [d]}|\hat\calU_{i+1} (y,\mu^\eta(t_{i+1})) - \hat\calU_{i+1} (y,\rho_i(t_{i+1}))|  + \max_{y\in [d]}|\hat{e}_i(y,\mu^\eta(t_i))|\Big]
    \\
    &\leq \tfrac{C}{(1-C|\pi|)^2} \Big[|\pi|^2 +  |\pi| \|U(\cdot,\cdot,\mu^\eta(\cdot)) - \hat\calU_i(\cdot,\mu^\eta(t_i))\|_{i,\iy}\Big] \\
        &\quad + \tfrac{1}{1-C|\pi|}\Big[ \max_{y\in [d]}|\hat e_i(y,\mu^\eta(t_i))| + \max_{y\in [d]}|U(t_{i+1},y,\mu^\eta(t_{i+1})) - \hat\calU_{i+1} (y,\mu^\eta(t_{i+1}))| \Big].
\end{align*}

For the rest of the proof, we denote by $\Gamma_0$ the constant $C$ appearing in the above right-hand side. That is, we are fixing $\Gamma_0$ to be $C$ at this particular point in the proof, but $C$ later on will be generic as usual. 
Rearranging terms, there exists $\Gamma_1>0$ depending only on $\Gamma_0$ and $\pi$ such that: 
\begin{align}
\begin{split}\label{eqn:step_1_sus}
    &\|U(\cdot,\cdot,\mu^\eta(\cdot)) - \hat\calU_i(\cdot,\mu^\eta(t_i))\|_{i,\iy} \\&\quad\leq \big(\tfrac{1}{1-\Gamma_0|\pi|}\big)\big(\tfrac{(1-\Gamma_1|\pi|)^2}{1-3\Gamma_1|\pi|}\big) \Big[ \Gamma_0|\pi|^2 +  \max_{y\in [d]}|\hat e_i(y,\mu^\eta(t_i))| \\
    &\hspace{4.5cm}+ \max_{y\in [d]} |U(t_{i+1},y,\mu^\eta(t_{i+1})) - \hat\calU_{i+1} (y,\mu^\eta(t_{i+1}))|\Big].
\end{split}
\end{align} 
Taking $C_2= 3\Gamma_1$, we have for all $|\pi|$ small enough that:
\[
    \big(\tfrac{1}{1-\Gamma_0|\pi|}\big)\big(\tfrac{(1-\Gamma_1|\pi|)^2}{1-3\Gamma_1|\pi|}\big) \leq \big(\tfrac{1-\Gamma_1 |\pi|}{1-C_2|\pi|}\big)^2=:Q.
\] 
Note that: 
\begin{align*}
    \sum_{j=1}^{N-1}Q^j &= \tfrac{Q^N - Q}{Q-1} \\
    &= \big[\big(\tfrac{1-\Gamma_1|\pi|}{1-C_2|\pi|}\big)^{2N} - Q\big] \tfrac{(1-C_2|\pi|)^2}{2|\pi|(C_2 - \Gamma_1) + |\pi|^2(\Gamma_1^2-C_2^2)}\\
    &\leq \big(\tfrac{1-\Gamma_1|\pi|}{1-C_2|\pi|}\big)^{2N}\cdot   \tfrac{(1-C_2|\pi|)^2}{2|\pi|(C_2 - \Gamma_1) + |\pi|^2(\Gamma_1^2-C_2^2)}.
\end{align*} Recalling \eqref{eqn:pi_proportional}, and noting that:
\[
    \big(\tfrac{1-\Gamma_1|\pi|}{1-C_2|\pi|}\big)^{2N} \to e^{2C_2 - 2\Gamma_1} = e^{4\Gamma_1},
\] 
monotonically as $N\to \iy$, we have for all $|\pi|>0$ small enough that:
\begin{align}\label{eqn:iterative_sum}
    \sum_{j=1}^{N-1}Q^j \leq CN e^{4\Gamma_1}. 
\end{align} Returning to \eqref{eqn:step_1_sus}, we know that for $i = N-1$, the terminal conditions agree and therefore,
\begin{align*}
    \|U(\cdot,\cdot,\mu^\eta(\cdot)) - \hat\calU_{N-1}(\cdot,\mu^\eta(t_{N-1})))\|_{N-1,\iy} &\leq Q\big[C|\pi|^2 +  \max_{y\in [d]}|\hat e_{N-1}(y,\mu^\eta(t_i))|\big].
\end{align*} 
By backward induction with \eqref{eqn:step_1_sus} and then using \eqref{eqn:iterative_sum}, 
\begin{align*}
    \max_{i\in [N]} \|U(\cdot,\cdot,\mu^\eta(\cdot)) - \hat\calU_{i}(\cdot,\mu^\eta(t_i)))\|_{i,\iy} &\leq \Big(\sum_{j=1}^{N-1}Q^j\Big) \big[C|\pi|^2 +  \max_{i\in[N]} \|\hat e_i\|_\iy \big] \\
    &\leq C|\pi| + CN \eps^{\calU}. 
\end{align*} This inequality holds for all $\eta \in \calP([d])$ and therefore we can take a maximum over all $\eta \in \calP([d])$ to conclude the proof.

\qed

\section{The DGME Algorithm and Convergence Results}\label{sec:dgm}

In what follows, we describe the DGME adaptation of the DGM to solve the master equation, and we then provide two novel theoretical results. Recall that $U$ denotes the unique classical solution to~ \eqref{master_equation:finite_horizon} and that $U(\cdot, x, \cdot)\in\calC^{1,1}_{}([0,T]\times\calP([d]))$ for all $x\in [d]$. By some previous remarks and from \cite[Theorem~3.1]{Hornick90}, it is known that $\calN\calN$ is dense in $(\calC^{1,1}([0,T] \times \calP([d])), \|\cdot\|_{\calC^{1,1}})$. 
 
 In Algorithm \ref{alg:DGM_ME}, we introduce the DGME, modified with the loss to a maximum from the expectation of a squared quantity. We also note that unlike the DBME, in which we restricted our attention to $\theta\in\Theta$, in the DGME we can take $\theta\in\R^{\bar\delta}$ without restriction.

\begin{algorithm}
\caption{DGME}\label{alg:DGM_ME}
\begin{algorithmic}[1]
\State {\bf Input:} An initial vector $\theta$. 
\State {\bf Output:} A trained vector $\hat \theta$ such that $\calU(\cdot, \cdot, \cdot ; \hat\theta)$ approximately solves \eqref{master_equation:finite_horizon}.
    \State Compute:
    $\displaystyle 
    \hat \theta \in \argmin\limits_{\theta \in \R^{\bar\delta}} L(\theta)
    $
    \State where 
    \begin{align}\label{eqn:dgm_max_loss}
        \begin{split}
            &L(\theta) := \max\limits_{(t,x,\eta)\in [0,T)\times [d]\times \calP([d])}\Big\{ \big| \pl_t \calU (t,x,\eta ; \theta) + \bar H(x,\eta,\Delta_x \calU(t,\cdot,\eta ; \theta)) \\
        &\qquad\qquad\qquad\qquad\qquad\qquad\qquad + \sum_{y\in [d]} \eta_y D^\eta_y \calU (t,x,\eta ; \theta)\cdot\gamma^*(y,\Delta_y \calU (t,\cdot,\eta ; \theta)) \big|\\
        &\qquad\qquad\qquad\qquad\qquad\qquad\qquad\qquad + |\calU (T,x,\eta ; \theta) - g(x,\eta) |\Big\}
        \end{split}
    \end{align} 
\end{algorithmic}
\end{algorithm}
As we noted in Remark \ref{rmk:appx_max}, we cannot, in practice, compute the maximum over the infinitely many elements in $[0,T) \times [d] \times \calP([d])$. So, like for the DBME and the original DGM formulation, we randomly sample the space instead. As in Remark \ref{rmk:appx_max}, we note that uniformly sampling the space results is an estimator for the maximum where for $\calK$ samples, we get $\calO(|\calK|^{-1/(d-1)})$ error for the estimator. 
The master equation we study in this paper only involves first-order derivatives with respect to the inputs $t$ and $\eta$. Hence computing the gradient with respect to the neural network parameters of the loss function~\eqref{eqn:dgm_max_loss} only requires computing second-order derivatives, which can be done by automatic differentiation. 
\footnote{If a second-order term was involved in the PDE, then the computation of the gradient of the loss would require third-order derivatives. Using automatic differentiation would probably be prohibitive in high dimension. In such cases, our method could perhaps be modified, using approximate second-order derivative computations, as proposed in~\cite[Section 3]{MR3874585}.} 

The following two theorems establish convergence of the DGME and are analogous to results from \cite[Theorem~7.1, Theorem~7.3]{MR3874585}, but these results do not apply to the master equation due to its distinct form. Instead, we use the structure of the MFG system itself in order to obtain useful bounds. Yet another difference is that our results are given in the supremum norm while those of \cite{MR3874585} are in $L^2$ norm.

In the theorem below, we note that the density of the neural networks implies there always exists a neural network uniformly close to the master equation solution $U$; then, we write that any close network satisfies the master equation \eqref{master_equation:finite_horizon} in an approximate sense. As we mentioned in the introduction, the two main results of each section are, loosely speaking, converse to one another. The first result deals with existence of networks close to the true solution and shows that when a network is close, the corresponding algorithm's loss is small.

\begin{theorem}[DGME Approximation]\label{prop:existence}
For every $\eps>0$, there exists a neural network $\hat\calU \in \calN\calN$ such that:
\begin{equation}\label{eqn:dgm_density}
    \|\hat\calU - U\|_{\calC^{1,1}} < \eps.
\end{equation} 
Moreover, let $e^{\hat\calU}:[0,T] \times [d] \times \calP([d]) \to \R$ be defined by: 
\begin{align}\label{psi:approximate_me_sol}
\begin{split}
     e^{\hat\calU}(t,x,\eta) &:= \sum_{y,z\in [d]} \eta_y D^\eta_{yz} \hat\calU(t,x,\eta) \gamma_z^*(y,\eta,\Delta_y \hat\calU(t,\cdot,\eta)) + H(x,\eta,\Delta_x \hat\calU(t,\cdot,\eta)) + \pl_t \hat\calU(t,x,\eta), \\
    e^{\hat\calU}(T,x,\eta) &:= \hat\calU(T,x,\eta)-g(x,\eta).
\end{split}
\end{align}
Then $e^{\hat\calU}$ is measurable and there exists a positive constant $\CDGME$, depending only on the problem data and independent of $\eps$, such that:
\begin{align}\label{eqn:error_conds}
    \begin{split}
    \|e^{\hat\calU}\|_\iy &< \CDGME\eps.
    \end{split}
\end{align} 

\end{theorem}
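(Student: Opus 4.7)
The plan is to invoke the Universal Approximation Theorem to produce a neural network $\hat\calU$ close to $U$ in the $\calC^{1,1}$ norm, and then to derive \eqref{eqn:error_conds} by a term-by-term comparison of the master-equation operator applied to $\hat\calU$ against its vanishing value when applied to $U$. The Lipschitz regularity of $H$ and $\gamma^*$ from the standing assumptions, together with the regularity of $U$ from Proposition \ref{prop:me_results}, will make this comparison yield a bound linear in $\eps$.

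\textbf{Existence of $\hat\calU$.} By Proposition \ref{prop:me_results}, $U(\cdot, x, \cdot) \in \calC^{1,1}([0,T] \times \calP([d]))$ for every $x \in [d]$. Since $[d]$ is finite and $\calP([d])$ is compact, \cite[Theorem~3.1]{Hornick90} yields a network $\hat\calU \in \calN\calN$ satisfying \eqref{eqn:dgm_density}. By the definition of the norm in \eqref{eqn:main_norm}, this simultaneously controls
\[
    \|\hat\calU - U\|_\iy,\ \|\pl_t(\hat\calU - U)\|_\iy,\ \|D^\eta_1(\hat\calU - U)\|_\iy < \eps,
\]
so in particular $\|D^\eta_{yz}(\hat\calU - U)\|_\iy < 2\eps$ for all $y,z$ via the identity $D^\eta_{yz} = D^\eta_{1z} - D^\eta_{1y}$. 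Measurability of $e^{\hat\calU}$ is immediate from that of $\hat\calU$, its continuous derivatives, $\gamma^*$, and $H$.

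\textbf{Bounding the residual.} Because $U$ solves \eqref{master_equation:finite_horizon} identically, $e^{\hat\calU}$ is exactly the difference between the operator evaluated at $\hat\calU$ and at $U$. Write it as the sum of $\pl_t(\hat\calU - U)$, of $\bar H(x,\eta,\Delta_x\hat\calU) - \bar H(x,\eta,\Delta_x U)$, and of the coupling term
\[
    R := \sum_{y,z\in [d]}\eta_y\bigl[D^\eta_{yz}\hat\calU\,\gamma^*_z(y,\Delta_y\hat\calU) - D^\eta_{yz}U\,\gamma^*_z(y,\Delta_y U)\bigr].
\]
The first piece is bounded by $\eps$. The Hamiltonian difference is controlled by the Lipschitz constant $C_{L,H}$, once we verify that $\Delta_x \hat\calU$ lies in the range $[-W,W]$ where the regularity of $H$ is available (see below); this gives a bound of order $\eps$. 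For $R$, apply the telescoping identity $A\alpha - B\beta = (A-B)\alpha + B(\alpha-\beta)$ to each summand to obtain
\[
    \eta_y[D^\eta_{yz}\hat\calU - D^\eta_{yz}U]\gamma^*_z(y,\Delta_y\hat\calU) + \eta_y D^\eta_{yz}U[\gamma^*_z(y,\Delta_y\hat\calU) - \gamma^*_z(y,\Delta_y U)].
\]
The first summand is at most $2\eta_y \eps\, C_{\gamma^*}$, the second at most $2\eta_y \|D^\eta U\|_\iy\, C_{L,\gamma^*}\,\eps$, using Remark \ref{rem:gammastar-Lip} and the finiteness of $\|D^\eta U\|_\iy$ (from $U \in \calC^{1,1}$). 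Summing over $y,z\in[d]$ and using $\sum_y\eta_y = 1$ gives a bound linear in $\eps$, uniform in $(t,x,\eta)$. The terminal residual $\hat\calU(T,x,\eta) - g(x,\eta) = \hat\calU(T,x,\eta) - U(T,x,\eta)$ is bounded by $\eps$ as well. Collecting all constants yields \eqref{eqn:error_conds} with $\CDGME$ depending only on the problem data (namely on $d$, $T$, $C_{L,H}$, $C_{\gamma^*}$, $C_{L,\gamma^*}$, and $\|D^\eta U\|_\iy$).

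\textbf{Main obstacle.} The only real subtlety is ensuring that $\Delta_x\hat\calU$ stays inside the range on which the local regularity of $H$ (and hence its Lipschitz constant on the relevant set) is assumed. Using $|\Delta_x\hat\calU| \le |\Delta_x U| + |\Delta_x(\hat\calU - U)| \le \sqrt{2d}\,\tilde T + 2\eps$, together with the $+1$ buffer built into the definition of $W$ in \eqref{eqn:interval_restriction}, this holds once $\eps < 1/2$, which is without loss of generality since we are free to take $\eps$ smaller. Beyond this verification, the argument is routine Lipschitz bookkeeping.
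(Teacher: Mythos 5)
Your proposal is correct and follows essentially the same route as the paper's proof: universal approximation in the $\calC^{1,1}$ norm, then a term-by-term Lipschitz comparison of the master-equation operator at $\hat\calU$ and at $U$ (using that $U$ solves \eqref{master_equation:finite_horizon}), with the same telescoping of the coupling term and the same verification that the argument of $H$ stays in $[-W,W]$. The only harmless slip is quantitative: $|\Delta_x(\hat\calU-U)|$ is bounded by $C\sqrt{d}\,\eps$ rather than $2\eps$, so the smallness threshold on $\eps$ is dimension-dependent rather than $\eps<1/2$; this is absorbed into ``take $\eps$ smaller,'' as you already note.
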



We stress that Theorem~\ref{prop:existence} has two parts. The first part, namely~\eqref{eqn:dgm_density}, holds by universal approximation theorem and is straightforward given the (known) regularity of the master equation solution. It is not directly related to the algorithm. The second part, namely~\eqref{eqn:error_conds}, provides a bound on the residual of the PDE (including the terminal condition) when using a neural network which gives~\eqref{eqn:dgm_density}. This provides an upper bound on the loss function of Algorithm~\ref{alg:DGM_ME}, which is simply the PDE residual.

We now present the main DGME result concerning convergence. This theorem asserts that if the DGME finds a network with small error in its loss function, then that network must be uniformly close to the master equation's solution $U$.

\begin{theorem}[DGME Convergence]\label{thm:dgm_convergence}
Let $0<\eps < 1$. Let $\hat\calU : [0,T]\times [d] \times \calP([d])\to\R$ be a neural network obtained by the DGME such that \eqref{psi:approximate_me_sol}--\eqref{eqn:error_conds} hold. Then, there exists $C>0$ depending only on the problem data and independent of $\eps$ such that: 
\begin{equation}\label{eqn:thm:dgm_bound}
\|\hat\calU - U\|_{\iy} < C\eps.
\end{equation} Consequently, any sequence of neural networks $(\hat\calU_n)_{n\in\N}$, with corresponding errors $(\eps_n)_{n\in\N}$ such that $\eps_n \to 0$, converges to $U$ uniformly.
\end{theorem}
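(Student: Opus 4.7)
The plan is to exploit the characterization of $U$ via the forward--backward MFG system \eqref{eqn:mfg} and carry out an MFG duality argument in the spirit of \cite{cec-pel2019}. Fix an arbitrary $(t_0, x_0, \eta_0) \in [0,T] \times [d] \times \calP([d])$ at which we wish to control $|\hat\calU - U|$. First, I would construct a surrogate MFG system driven by $\hat\calU$: let $\hat\mu:[t_0,T]\to\calP([d])$ be the unique solution of the forward Kolmogorov ODE
\begin{equation*}
    \dot{\hat\mu}(t,x) = \sum_{y\in[d]}\hat\mu(t,y)\gamma^*_x(y,\Delta_y\hat\calU(t,\cdot,\hat\mu(t))), \qquad \hat\mu(t_0)=\eta_0,
\end{equation*}
whose well-posedness follows from the smoothness of $\hat\calU$ and the local Lipschitz continuity of $\gamma^*$ (Remark~\ref{rem:gammastar-Lip}). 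Set $\hat u(t,x):=\hat\calU(t,x,\hat\mu(t))$. Differentiating $\hat u$ along this flow via the simplex chain rule so that $\dot{\hat\calU}(t,x,\hat\mu(t)) = \partial_t\hat\calU + \sum_{y,z}\hat\mu(t,y)\gamma^*_z(y,\Delta_y\hat\calU)D^\eta_{yz}\hat\calU$, and invoking the definition \eqref{psi:approximate_me_sol} of $e^{\hat\calU}$, the pair $(\hat u,\hat\mu)$ satisfies the MFG system \eqref{eqn:mfg} up to an additive perturbation $e^{\hat\calU}(\cdot,\cdot,\hat\mu(\cdot))$ in the HJB and $e^{\hat\calU}(T,\cdot,\hat\mu(T))$ in the terminal condition, both bounded by $\CDGME\eps$ in supremum norm via \eqref{eqn:error_conds}.

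Let $(u,\mu):=(u^{t_0,\eta_0},\mu^{t_0,\eta_0})$ denote the exact MFG solution from Proposition~\ref{prop:me_results}, which shares the initial distribution $\eta_0$ with $\hat\mu$ at time $t_0$. The core of the proof is MFG duality applied to
\begin{equation*}
    \Phi(t) := \sum_{x\in[d]}\bigl[\hat\mu(t,x) - \mu(t,x)\bigr]\bigl[\hat u(t,x) - u(t,x)\bigr], \qquad t\in[t_0,T].
\end{equation*}
Since $\hat\mu(t_0)=\mu(t_0)=\eta_0$, we have $\Phi(t_0)=0$; the Lasry--Lions monotonicity \eqref{eqn:LL-monotone} of $g$ combined with the terminal error bound yields $\Phi(T)\geq -C\eps$. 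Computing $\dot\Phi$ using the forward ODEs for $\hat\mu,\mu$ and the (perturbed) HJBs for $\hat u,u$, and then applying the strong concavity \eqref{eqn:H_regularity} of $H$ (which gives $H(y,\hat p)-H(y,p)-\gamma^*(y,p)\cdot(\hat p-p)\leq -\tfrac{C_{2,H}}{2}|\hat p-p|^2$) along with the Lasry--Lions monotonicity of $F$, standard manipulations yield the weighted $L^2$ estimate
\begin{equation*}
    \int_{t_0}^T\sum_{y\in[d]}\bigl[\hat\mu(t,y)+\mu(t,y)\bigr]\bigl|\Delta_y\hat u(t,\cdot) - \Delta_y u(t,\cdot)\bigr|^2\,dt \leq C\eps.
\end{equation*}

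To convert this into the pointwise bound \eqref{eqn:thm:dgm_bound}, I would bootstrap. Using the forward equation for $\hat\mu - \mu$ together with the Lipschitz continuity of $\gamma^*$ on the compact range afforded by Remark~\ref{rmk:nn_bound}, and combining with the preceding integrated bound, Gronwall's inequality gives $\|\hat\mu - \mu\|_\infty\leq C\eps$. Substituting this into the backward equation satisfied by $\hat u - u$, invoking Lipschitz continuity of $\bar H$ in $(\eta,p)$, the terminal bound $\|\hat u(T,\cdot)-u(T,\cdot)\|_\infty\leq C\eps$, and a backward Gronwall argument yields $\|\hat u - u\|_\infty\leq C\eps$. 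At the initial time $t_0$, the consistency relation \eqref{eqn:consistency_relation} gives $u(t_0,x) = U(t_0,x,\eta_0)$, while by construction $\hat u(t_0,x) = \hat\calU(t_0,x,\eta_0)$; hence $|\hat\calU(t_0,x,\eta_0) - U(t_0,x,\eta_0)|\leq C\eps$. Taking the supremum over the arbitrary $(t_0,x,\eta_0)$ yields \eqref{eqn:thm:dgm_bound}, and the ``consequently'' clause is immediate.

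The step I expect to be the main obstacle is upgrading the weighted $L^2$ duality estimate to a uniform bound on $\hat\mu - \mu$. When the lower rate bound $\mathfrak{a}_l$ equals zero, the densities $\hat\mu(t,y),\mu(t,y)$ may vanish at some states, making the weights in the $L^2$ inequality degenerate. Resolving this requires carefully combining the Lipschitz regularity of $\hat\calU$ in $\eta$ (hence of the induced rates, uniformly controlled thanks to Remark~\ref{rmk:nn_bound}) with the observation that the forward-equation driver at a state of small mass is itself of small magnitude, mirroring the techniques developed in the well-posedness analysis of \cite{cec-pel2019}.
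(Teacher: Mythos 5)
Your architecture coincides with the paper's: you build the surrogate MFG system driven by $\hat\calU$ (your $(\hat u,\hat\mu)$ is exactly the pair $(v,\lambda)$ of Lemma~\ref{lemma:network_bound}), run the Lasry--Lions duality computation with the strong concavity of $H$ to get a weighted $L^2$ bound on $\Delta_y(\hat u-u)$, propagate through the forward and backward equations, and close with the consistency relation~\eqref{eqn:consistency_relation}. The gap is quantitative and sits in your bootstrap step. When you state the duality estimate as $\int_{t_0}^T\sum_y(\hat\mu_y+\mu_y)\,|\Delta_y\hat u-\Delta_y u|^2\,dt\le C\eps$, you have implicitly bounded the error terms $-\int e\cdot(\hat\mu-\mu)\,dt-e(T,\cdot)\cdot(\hat\mu-\mu)(T)$ by $C\|e\|_\iy\le C\eps$ using only the crude bound $|\hat\mu-\mu|\le 2$. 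Feeding that into the forward equation for $\hat\mu-\mu$ --- whose driver, after adding and subtracting, is controlled by $C\max_y|\hat\mu_y-\mu_y|+C\sum_y\mu_y|\Delta_y(\hat u-u)|\le C\max_y|\hat\mu_y-\mu_y|+C\big(\sum_y\mu_y|\Delta_y(\hat u-u)|^2\big)^{1/2}$ --- Gronwall delivers only $\|\hat\mu-\mu\|_\iy\le C\eps^{1/2}$, not $C\eps$, and hence $\|\hat\calU-U\|_\iy\le C\sqrt{\eps}$, which is weaker than the claimed~\eqref{eqn:thm:dgm_bound}. The fix, which is what the paper does, is to retain the factor $\|\hat\mu-\mu\|_\iy$ on the right of the duality estimate, i.e.\ $\int_{t_0}^T\sum_y(\hat\mu_y+\mu_y)|\Delta_y(\hat u-u)|^2\,dt\le C\|e\|_\iy\|\hat\mu-\mu\|_\iy$, so the forward-equation estimate self-improves to $\|\hat\mu-\mu\|_\iy\le C\big(\|e\|_\iy\|\hat\mu-\mu\|_\iy\big)^{1/2}$, and Young's inequality (or simply dividing) yields $\|\hat\mu-\mu\|_\iy\le C\|e\|_\iy\le C\eps$. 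With that, your backward Gronwall step gives the linear rate.

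Separately, the obstacle you single out at the end --- degeneracy of the weights $\hat\mu_y,\mu_y$ when $\mathfrak{a}_l=0$ --- is not actually an issue, and no additional machinery from the well-posedness analysis of \cite{cec-pel2019} is needed. You never have to remove the weights: the difference of the forward drivers decomposes as $(\hat\mu_y-\mu_y)\gamma^*_x(y,\Delta_y\hat u)+\mu_y\big(\gamma^*_x(y,\Delta_y\hat u)-\gamma^*_x(y,\Delta_y u)\big)$, so the factor $\mu_y$ multiplying $|\Delta_y(\hat u-u)|$ is supplied for free, and Jensen's inequality with $\sum_y\mu_y=1$ matches it exactly to the weighted quantity that duality controls. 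The anticipated obstacle thus dissolves; the genuine subtlety is the rate-preserving absorption argument described above.
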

The proof is based on a careful analysis of the master equation, and its connection with forward-backward ODE systems characterizing the MFG equilibrium for each initial condition. To the best of our knowledge, the result cannot be deduced from the literature. In particular, the analysis of the DGM in~\cite{MR3874585} is only for a specific class of (quasilinear, parabolic) PDEs, which does not cover ours. Furthermore, the master equation, despite some similarities, is not an HJB equation and does not enjoy a comparison principle. 

\begin{remark}
    The attentive reader would observe that in the proof of Theorems \ref{prop:existence} and \ref{thm:dgm_convergence}, we effectively establish the existence of two constants, $\underline{C},\bar{C}>0$, such that for any $\hat\calU\in\calN\calN$, one has
    $\underline{C} \|e^{\hat\calU}\|_\infty \le \|\hat\calU - U\|_{\iy} \le \bar{C} \|e^{\hat\calU}\|_\infty$.
\end{remark}

\subsection{Proof of Theorem \ref{prop:existence}}

We note that \eqref{eqn:dgm_density} is immediate from the universal approximation result of \cite[Theorem~3.1]{Hornick90}. It suffices to prove \eqref{psi:approximate_me_sol}--\eqref{eqn:error_conds}. Recall that $C>0$ is a generic constant independent of $t$ and $\eta$, and may change from one line to the next. 


From \eqref{eqn:dgm_density}, for any $\eps>0$ and any $x\in [d]$, there exists a neural network $\hat\calU(\cdot,x,\cdot)\in\calN\calN$ such that:
\[
\|U(\cdot,x,\cdot) - \hat\calU(\cdot,x,\cdot)\|_{\calC^{1,1}} < \eps,\qquad\text{for all $x\in [d]$.}
\] Since \eqref{eqn:dgm_density} holds for all $x\in[d]$, we may write
$
    \|\hat\calU - U\|_{\calC^{1,1}}$ to mean $ \max_{x\in [d]}\|\hat\calU(\cdot,x,\cdot) - U(\cdot,x,\cdot)\|_{\calC^{1,1}}$ without ambiguity. 
Note that:
\[
\|\hat\calU\|_{\iy} \leq \|U\|_\iy + \eps \le \tilde{T} + \eps,
\] 
so $\hat\calU$ is bounded above by the right hand side of \eqref{eqn:interval_restriction}. When $\eps$ is small enough, $H$ is restricted to the interval $[-W,W]$ and note that $H$ is locally Lipschitz on $[-W,W]$ by assumption. We may therefore treat $H$ as Lipschitz. 

Recall the definition of $e^{\hat\calU}(t,x,\eta)$ from \eqref{psi:approximate_me_sol}. For brevity, we write $e^{\hat\calU}$ as $e$ during the proof. Since $U$ solves~\eqref{master_equation:finite_horizon}, we find that:
\begin{equation}
\label{eq:DGM-e-hatpsi-equation}
\begin{split}
    e(t,x,\eta) &= \pl_t \hat\calU(t,x,\eta) - \pl_t U(t,x,\eta) + \bar H(x,\eta,\Delta_x \hat\calU(t,\cdot,\eta))  - \bar H(x,\eta, \Delta_x U(t,\cdot,\eta)) \\
    &\quad + \sum_{y,z\in [d]} \left[\eta_y D^\eta_{yz} \hat\calU (t,x,\eta) \gamma_z^*(y,\Delta_y \hat\calU (t,\cdot,\eta)) - \eta_y D^\eta_{yz} U (t,x,\eta) \gamma_z^*(y,\Delta_y U (t,\cdot,\eta))\right]. 
\end{split}
\end{equation} 
We can check that, for the last line of \eqref{eq:DGM-e-hatpsi-equation}: 
\begin{align*}
    &\Big|\sum_{y,z\in [d]} \left[\eta_y D^\eta_{yz} \hat\calU (t,x,\eta) \gamma^*_z(y,\Delta_y \hat\calU(t,\cdot,\eta)) - \eta_y D^\eta_{yz} U (t,x,\eta) \gamma^*_z(y,\Delta_y U(t,\cdot,\eta))\right] \Big| \\
    &\quad\leq d^2 \max_{y,z\in [d]} |D^\eta_{yz} \hat\calU (t,x,\eta) \gamma^*_z(y,\Delta_y \hat\calU(t,\cdot,\eta)) -  D^\eta_{yz} U (t,x,\eta) \gamma^*_z(y,\Delta_y U(t,\cdot,\eta))| \\
    &\quad \leq d^2 \max_{y,z\in [d]} |D^\eta_{yz} \hat\calU (t,x,\eta) \gamma^*_z(y,\Delta_y \hat\calU (t,\cdot,\eta)) -  D^\eta_{yz} U (t,x,\eta) \gamma^*_z(y,\Delta_y \hat\calU(t,\cdot,\eta))| \\
    &\quad\quad + d^2 \max_{y,z\in [d]} | D^\eta_{yz} U (t,x,\eta)  \gamma^*_z(y,\Delta_y \hat\calU(t,\cdot,\eta)) - D^\eta_{yz} U(t,x,\eta) \gamma^*_z(y,\Delta_y U(t,\cdot,\eta))|.
\end{align*} 
Using the facts that $\gamma^*$ is bounded and Lipschitz, that the classical master equation solution $U$ has $D^\eta U$ bounded, and \eqref{eqn:dgm_density}, we can bound the above quantity by:
\[
    d^2 C_{\gamma} \max_{y,z\in[d]}|D^\eta_{yz} (\hat\calU(t,x,\eta) - U(t,x,\eta))| + d^2 C_{D^\eta U} C_{L,\gamma} \max_{y\in [d]} |\Delta_y (\hat\calU (t,\cdot, \eta) - U(t,\cdot,\eta))| < C\eps,
\] 
where $C$ is independent of $t,\eta$. Going back to~\eqref{eq:DGM-e-hatpsi-equation}, using the Lipschitz continuity of $H$ and of $F$, and again \eqref{eqn:dgm_density}, we obtain: 
    $|e(t,x,\eta)| < C\eps.$
In other words, $\hat\calU$ satisfies \eqref{psi:approximate_me_sol}--\eqref{eqn:error_conds}. 
\qed

In the following section, we prove a lemma that is at the heart of Theorem \ref{thm:dgm_convergence}. The lemma allows us to circumvent the unfortunate fact that the master equation has no comparison principle; thus we introduce an approximate MFG system and use duality.

\subsection{Approximate MFG System}

In the following lemma, we define the function $v$ through the network $\hat\calU$ and we use the fact that it satisfies an approximate Hamilton--Jacobi--Bellman equation. Its evolution is coupled with a function $\lambda$, which solves a forward Kolmogorov equation of a non-linear Markov chain. That is, define $\lambda$ as the solution to:
\[
\frac{d}{dt} \lambda_x(t) = \sum_{y\in [d]} \lambda_y(t) \gamma_x^*(y,\Delta_y \hat\calU(t,\cdot,\lambda(t))), \quad \lambda(t_0) = \eta,
\] on $t\in [t_0,T]$. Note that $\lambda$ is well-defined by the Picard--Lindel\"of Theorem.

\begin{lemma}[Approximate MFG System and Network Bound]\label{lemma:network_bound}
    Let $0<\eps < 1$. Let $\hat\calU : [0,T]\times [d] \times \calP([d])\to\R$ be a neural network obtained by the DGME. Let the error function $e$ be defined as in~\eqref{psi:approximate_me_sol}. Assume \eqref{eqn:error_conds} holds. Then, by setting $v_x(t):=\hat\calU(t,x,\lambda(t))$, we get that $(v, \lambda)$ solves the approximate MFG system on $(t,x) \in [t_0,T) \times [d]$: 
\begin{align}
\begin{split}\label{eqn:mfg_appx}
    &\frac{d}{dt} v_x (t) + \bar H(x, \lambda(t), \Delta_x v(t)) = e(t,x,\lambda(t)), \\ 
    &\frac{d}{dt} \lambda_x(t) = \sum_{y\in [d]} \lambda_y(t) \gamma^*_x(y,\Delta_y v (t)), \\ 
    &\lambda(t_0) = \eta, \\ 
    &v_x(T) = g(x,\lambda(T)) + e(T,x,\lambda(T)).
\end{split}
\end{align} 
Moreover, for all $(t,x,\eta) \in [t_0,T]\times [d] \times \calP([d])$ and when $\eps$ is small enough:
    \[
    |\hat\calU (t,x,\eta)| \leq T[C_f + C_F + \CDGME\eps] + C_g + \CDGME\eps \leq W,
    \] 
    where $\CDGME$ is defined in~\eqref{eqn:error_conds} and $W$ is defined in \eqref{eqn:interval_restriction}. Hence, the regularity assumptions on $H$ given in \eqref{eqn:H_regularity} apply. 
\end{lemma}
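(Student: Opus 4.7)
The plan has two independent pieces. First, I would verify that $(v,\lambda)$ satisfies the coupled ODE system \eqref{eqn:mfg_appx} by chain rule. Second, I would derive the sup bound on $\hat\calU$ by a stochastic verification argument based on the approximate HJB equation that $v$ satisfies, and finally compare to $W$.

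For the first piece, I would differentiate $v_x(t) = \hat\calU(t,x,\lambda(t))$ in $t$ using chain rule on a smooth extension of $\hat\calU$ off the simplex:
\[
    \dot v_x(t) = \partial_t \hat\calU(t,x,\lambda(t)) + \sum_z \dot\lambda_z(t)\, \partial_{\eta_z}\hat\calU(t,x,\lambda(t)).
\]
The crucial algebraic identity is that, because $\sum_z \gamma^*_z(y,\cdot) = 0$ (so $\sum_z\dot\lambda_z = 0$), the rewriting
$\sum_{y,z}\lambda_y D^\eta_{yz}\hat\calU\,\gamma^*_z(y,\cdot) = \sum_z (\sum_y \lambda_y \gamma^*_z(y,\cdot))\partial_{\eta_z}\hat\calU = \sum_z\dot\lambda_z\partial_{\eta_z}\hat\calU$
holds, matching exactly the measure-derivative term in the definition of $e$. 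Substituting into the definition of $e(t,x,\lambda(t))$ in \eqref{psi:approximate_me_sol} yields $\dot v_x(t) + \bar H(x,\lambda(t),\Delta_x v(t)) = e(t,x,\lambda(t))$. The terminal condition and the initial condition on $\lambda$ are immediate from the definitions.

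For the second piece, fix $(t_0,x,\eta)$ and any admissible $\alpha \in \calA$. Let $\calX^\alpha$ denote the continuous-time Markov chain on $[d]$ with rates $\alpha$, started at $x$ at time $t_0$, and driven by the flow $\lambda$. Applying Dynkin's formula to $s \mapsto v_{\calX^\alpha_s}(s)$, together with the approximate HJB equation for $v$ and the inequality $\bar H(x,\lambda(s),\Delta_x v(s)) \le f(x,\alpha(s,x)) + F(x,\lambda(s)) + \sum_{y\ne x}\alpha_y(s,x)(v_y(s)-v_x(s))$ coming from the definition of $\bar H$, I get
\[
v_x(t_0) \le \mathbb{E}\Big[g(\calX^\alpha_T,\lambda(T)) + e(T,\calX^\alpha_T,\lambda(T)) + \int_{t_0}^T\bigl(f + F - e\bigr)(s,\calX^\alpha_s,\lambda(s))\,ds\Big].
\]
Choosing the optimal feedback $\alpha^*(s,z) = \gamma^*(z,\Delta_z v(s))$ gives the reverse inequality (equality in $\bar H$). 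Using $\|e\|_\infty \le \CDGME\eps$ from \eqref{eqn:error_conds} and the uniform bounds on $f,F,g$, this yields the two-sided estimate $|v_x(t_0)| \le C_g + \CDGME\eps + T(C_f+C_F+\CDGME\eps)$. Since $\lambda(t_0)=\eta$ gives $v_x(t_0) = \hat\calU(t_0,x,\eta)$ and $(t_0,x,\eta)$ was arbitrary, this is exactly the asserted bound on $\hat\calU$. Comparison with $W = \sqrt{2d}(C_g + T(C_f+C_F)) + 1$ from \eqref{eqn:interval_restriction} then holds whenever $(T+1)\CDGME\eps$ is smaller than $(\sqrt{2d}-1)(C_g + T(C_f+C_F)) + 1$, which is achieved for $\eps$ sufficiently small.

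The main obstacle is the chain rule step: one must carefully justify the identity relating Euclidean directional derivatives $\partial_{\eta_z}\hat\calU$ (used along the trajectory $\lambda$) to the simplex directional derivatives $D^\eta_{yz}\hat\calU$ (appearing in $e$), which relies on the mass-conservation $\sum_z\dot\lambda_z=0$. Once this identity is in hand, the verification argument is a standard application of Dynkin's formula plus the min-structure of $\bar H$, with the error term $e$ simply propagating additively through the estimate.
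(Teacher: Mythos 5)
Your proposal is correct and follows essentially the same route as the paper: the chain rule along the flow $\lambda$ (with the cancellation $\sum_z\gamma^*_z(y,\cdot)=0$ reconciling the Euclidean and simplex derivatives) to obtain the approximate MFG system, and then a Dynkin/It\^o verification argument with the jump process driven by the rates $\gamma^*(\cdot,\Delta_\cdot v)$ to represent $v_x(t_0)$ as the cost $J$ plus error terms bounded by $(T+1)\CDGME\eps$. The paper simply writes the equality case with the optimal feedback directly rather than the two-sided sub/super-optimality argument, but the resulting bound and the comparison with $W$ are identical.
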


\begin{remark}
    The careful reader may wonder why the bound on $|\hat\calU|$ above does not immediately follow from Theorem \ref{prop:existence}. This is because Lemma \ref{lemma:network_bound} and Theorem \ref{thm:dgm_convergence} deal with a network obtained from the DGME, while Theorem \ref{prop:existence} does not. Theorem \ref{prop:existence} shows that running the DGME is not hopeless since there exist neural networks with the properties we are seeking. Offhand, we only know that the network considered in Lemma \ref{lemma:network_bound} and Theorem \ref{thm:dgm_convergence} satisfies the master equation in some approximate way; therefore, we use only this fact to prove $|\hat\calU|$ is bounded.  
\end{remark}

\subsection{Proof of Lemma \ref{lemma:network_bound}} We use the structure of the MFG system to obtain important estimates. That is, we fix $t_0\in [0,T)$ and $\eta \in\calP([d])$ and set $\lambda$ to be the unique solution to the Kolmogorov equation in \eqref{eqn:mfg_appx} on $[t_0, T]$.

For $v_x(t) := \hat\calU(t,x,\lambda(t))$ defined on $[t_0,T]$, we use the fact that $\hat\calU$ satisfies \eqref{psi:approximate_me_sol} to get:
\begin{align*}
    \frac{d}{dt} v_x(t) &= \frac{d}{dt} \hat\calU (t,x,\lambda(t)) \\
    &=\pl_t \hat\calU (t,x,\lambda(t)) + \sum_{y\in [d]} \lambda_y(t) D^\eta_y \hat\calU (t,x,\lambda(t)) \cdot \gamma^*(y,\Delta_y \hat\calU(t,\cdot, \lambda(t))) \\
    &= e(t,x,\lambda(t)) - \bar H(x,\lambda(t), \Delta_x v(t)),
\end{align*}
where the error function $e$ is defined in~\eqref{psi:approximate_me_sol}. 
So together, $(v, \lambda)$ solves \eqref{eqn:mfg_appx}. It remains to show the bound on $\hat\calU$.

Let $\tilde\calX$ be a jump process on $[d]$ with rate matrix given by $\gamma_x^* (y,\Delta_y v (t))$; namely, its infinitesimal generator is given by $[\gamma_x^* (y,\Delta_y v (t))]_{x,y \in [d]}$. Set $\tilde\calX_{t_0} = z\in [d]$.  By an application of It\^o's lemma, and using the boundary conditions with \eqref{psi:approximate_me_sol}, we have  for any $t \in [t_0,T]$, 
\begin{align*}
    &\EE \big[ g(\tilde \calX_T, \lambda(T)) + e(T,\tilde \calX_T, \lambda(T)) - v_z (t_0)\big] 
    \\
    &\quad= \EE \int_{t_0}^T \big[\pl_t v_{\tilde \calX_s}(s) + \gamma^*(\tilde \calX_s, \Delta_{\tilde \calX_s} v(s)) \cdot \Delta_{\tilde \calX_s} v (s) \big] ds \\ 
    &\quad= \EE \int_{t_0}^T \big[ -f(\tilde \calX_s, \gamma^*(\tilde \calX_s, \Delta_{\tilde \calX_s} v(s))) - F(\tilde \calX_s, \lambda(s))  + e(s,\tilde \calX_s,\lambda(s)) \big] ds.
\end{align*} 
Then,
\[
    v_z(t_0) = J(t_0, x, \lambda, \gamma^*) + \EE \Big[\int_{t_0}^T e(t,\tilde \calX_t, \lambda(t)) dt + e(T,\tilde \calX_T, \lambda(T))\Big].
\] 
So by definition of $v$, by~\eqref{eqn:error_conds}, since $f$ and $F$ are bounded, and since $t_0$ was arbitrary,
\begin{align}\label{eqn:U_unif_bound}
    |\hat\calU(t,x,\eta)| \leq T[C_f + C_F +  \CDGME\eps] + C_g + \CDGME\eps.
\end{align}

By \eqref{eqn:U_unif_bound}, $\hat\calU$ is uniformly bounded. Moreover, since $\eps < 1$, $\hat\calU(t,x,\eta) \in [-W,W]$ for all $(t,x,\eta)$ ($W$  is defined in~\eqref{eqn:interval_restriction}). On the input $\hat\calU(t,x,\eta)$ then, \eqref{eqn:H_regularity} holds.
\qed

In particular, from Lemma \ref{lemma:network_bound}, we have that for all $(t,x,\eta) \in [0,T]\times [d]\times \calP([d])$: \[
D^2_{pp} H(x,\Delta_x \hat\calU(t,\cdot,\eta)) \leq -C_{2,H}.
\] This observation will be useful in the following proof.

\subsection{Proof of Theorem \ref{thm:dgm_convergence}}

The proof relies on an approximate MFG system on the time interval $[t_0,T]$ with initial distribution $\eta\in\calP([d])$. We compare its solution to the true solution using MFG duality, that is, integrating the backward equation's solution $u$ as a test function in the forward equation to obtain estimates on the solution. The MFG solution is related to the master equation as discussed in Section \ref{sec:me_recall}, and since $t_0$ and $\eta$ are arbitrary, we obtain convergence to the master equation solution. 

The proof proceeds in three steps. Already we formulated the approximate MFG system~\eqref{eqn:mfg_appx} defined through a neural network and meant to mimic the MFG. We then proved a bound on the solution to the approximate system. 
In the subsequent steps, we will take advantage of the fact that the approximate MFG system is structured like the MFG system. In Step 1, we use the duality of the MFG and approximate MFG systems to obtain several estimates. That is, we treat the approximate MFG value $v$ as a test function to integrate
against, in order to derive our estimates. In Steps 2 and 3, we integrate the differences of solution and combine these estimates with duality to finish the proof. 



\subsubsection*{Step 1: Duality}
Fix $\eta \in \calP([d])$ and let $(u^{t_0,\eta} , \mu^{t_0,\eta})$ solve \eqref{eqn:mfg} on $[t_0, T]$ with $\mu^{t_0,\eta}(t_0) = \eta$. For ease of notation, we suppress the superscript and simply write $(u,\mu)$ instead. Define $\scrQ := v - u$ and $\scrM := \lambda- \mu$.  Using \eqref{eqn:mfg} and \eqref{eqn:mfg_appx}, $(\scrQ, \scrM)$ solves: 
\begin{align}
    \begin{split}\label{eqn:qm}
        -&\frac{d\scrQ_x}{dt}(t) = \bar H(x,\lambda(t), \Delta_x v(t)) - \bar H(x,\mu (t), \Delta_x u(t)) - e(t,x, \lambda(t)), \\
        &\frac{d\scrM_x}{dt}(t) = \sum_{y\in [d]} \big[\lambda_y(t) \gamma^*_x(y,\Delta_y v(t)) - \mu_y(t) \gamma^*_x(y,\Delta_y u(t))\big], \\
        &\scrQ_x(T) = g(x,\lambda(T)) - g(x,\mu(T)) + e(T,x, \lambda(T)), \\ 
        &\scrM (t_0) = 0.
    \end{split} 
\end{align} 
 Using the product rule and \eqref{eqn:qm}:
\begin{align*}
    \sum_{x\in [d]}\scrQ_x(T)\scrM_x(T) 
    &= \sum_{x\in [d]} \int_{t_0}^T \Big\{\scrM_x(t)[-\bar H(x,\lambda(t), \Delta_x v(t)) + \bar H(x,\mu(t), \Delta_x u(t))] \\
    &\qquad\qquad\qquad + \scrQ_x(t)\sum_{y\in [d]}\left[ \lambda_y(t) \gamma^*_x(y,\Delta_y v(t)) -\mu_y(t) \gamma^*_x(y,\Delta_y u(t))\right]\\
    &\qquad\qquad\qquad - \scrM_x(t)e(t,x, \lambda(t))\Big\} dt.  
\end{align*} 
We use the definition of $\bar H$, the terminal condition on $\scrQ$, and the fact that the rates sum to zero (namely, for any $x\in[d]$ and $p\in\R^d$, $\sum_y \gamma^*_y(x,\Delta_x p)=0$) to get: 
\begin{align*}
    \sum_{x\in [d]} (g(x,\lambda(T)) &- g(x,\mu(T))) \scrM_x(T) + \sum_{x\in [d]} \int_{t_0}^T  [F(x, \lambda(t)) - F(x, \mu(t))]\scrM_x(t) dt \\
    &=  \int_{t_0}^T \Big[\sum_{x\in [d]} [H(x,\Delta_x u(t)) - H(x,\Delta_x v(t))] \scrM_x(t) \\
    &\quad + \Delta_x \scrQ(t) \cdot [\lambda_x(t) \gamma^*_y(x,\Delta_x v(t)) - \mu_x(t) \gamma^*_y(x,\Delta_x u(t))]_{y\in [d]}\Big] dt \\
    &\quad - \int_{t_0}^T e(t,\cdot,\lambda(t)) \cdot \scrM(t) dt - e(T,\cdot,\lambda(t)) \cdot \scrM(T).
\end{align*} 
By assumption, $g$ and $F$ are Lasry--Lions monotone and hence the left-hand side of the above display is non-negative. Hence,
\begin{align}\label{eqn:pre_second_order}
    0 &\leq  \int_{t_0}^T \Big[\sum_{x\in [d]} [H(x,\Delta_x u(t)) - H(x,\Delta_x v(t))] \scrM_x(t) \\ \notag
    &\qquad\qquad + \Delta_x \scrQ(t) \cdot [\lambda_x(t) \gamma^*_y(x,\Delta_x v(t)) - \mu_x(t) \gamma^*_y(x,\Delta_x u(t))]_{y\in [d]}\Big] dt \\ \notag
    &\quad - \int_{t_0}^T e(t,\cdot,\lambda(t)) \cdot \scrM(t) dt - e(T,\cdot,\lambda(t)) \cdot \scrM(T).
\end{align}  By Lemma \ref{lemma:network_bound}, we may apply \eqref{eqn:DH} and \eqref{eqn:H_regularity}. That is, there exists $C_{2,H}>0$ such that for all $t\in [t_0,T]$ and all $x\in [d]$: 
\begin{align*}
    &H(x,\Delta_x v(t)) - H(x,\Delta_x u(t)) - \Delta_x \scrQ(t) \cdot \gamma^*(x,\Delta_x v(t)) \leq -C_{2,H} |\Delta_x \scrQ(t)|^2, \\
    &H(x,\Delta_x u(t)) - H(x,\Delta_x v(t)) + \Delta_x \scrQ(t) \cdot \gamma^*(x,\Delta_x u(t)) \leq -C_{2,H} |\Delta_x \scrQ(t)|^2.
\end{align*} Using the above display in \eqref{eqn:pre_second_order} and rearranging terms,
\begin{align}
\begin{split}\label{eqn:appx_dual}
    &C\int_{t_0}^T \Big[\sum_{x\in [d]} |\Delta_x \scrQ(t)|^2 (\lambda_x(t) + \mu_x(t))\Big] dt \\
    &\qquad\leq - \int_{t_0}^T e(t,\cdot,\lambda(t)) \cdot \scrM(t) dt - e(T,\cdot,\lambda(T)) \cdot \scrM(T),
\end{split}
\end{align} 
where recall that $C$ denotes a generic positive constant and its value may change from one line to the next. 

\subsubsection*{Step 2: Integrating the backward equation from \eqref{eqn:qm}} Integrating the backward equation from \eqref{eqn:qm}, using the Lipschitz continuity of $F$ and $H$ (the latter uses Lemma \ref{lemma:network_bound}, again), using the terminal condition for $\scrQ$, using the Lipschitz continuity of $g$, and taking the supremum norm:
\begin{align*}
        |\scrQ_x(t_0)| &\leq |\scrQ_x(T)| + \int_{t_0}^T \big[|F(x,\lambda(t)) - F(x,\mu(t))| + |H(x,\Delta_x v(t)) - H(x,\Delta_x u(t))| \\
        &\qquad\qquad\qquad\quad\qquad + |e(t,x,\lambda(t))|\big] dt \\
        &\leq |\scrQ_x(T)| + C \int_{t_0}^T \big[|\lambda(t) - \mu(t)| + |\Delta_x (v(t) - u(t))| + |e(t,x,\lambda(t))|\big] dt \\ 
        &= |\scrQ_x(T)| + C \int_{t_0}^T \big[|\scrM(t)| + |\Delta_x \scrQ(t))| + |e(t,x,\lambda(t))|\big] dt \\
        &\leq |g(x,\lambda(T)) - g(x,\mu(T))| + |e(T,x,\lambda(T))| \\
            &\qquad + C \int_{t_0}^T \big[|\scrM(t)| + |\Delta_x \scrQ(t)| + |e(t,x,\lambda(t))|\big] dt \\ 
        &\leq C\Big(\|\scrM\|_{\iy} + \|e\|_{\iy} + \int_{t_0}^T \max_{z\in [d]}|\scrQ_z(t)| dt\Big).
\end{align*} Then, by Gronwall's inequality and taking the supremum over all $t_0 \leq T$: 
\begin{align}
    \begin{split}\label{eqn:q_bound}
        \|\scrQ\|_\iy \leq C(\|\scrM\|_{\iy} + \|e\|_{\iy}).
    \end{split}
\end{align} 

\subsubsection*{Step 3: Integrating the forward equation from \eqref{eqn:qm}} Integrating the measure equation from \eqref{eqn:qm} on $[t_0,t_1]$, adding and subtracting a term, and using the boundedness of $\gamma^*$, and the Lischitz continuity of $\gamma^*$ (over the range to which the inputs belongs):
\begin{align*}
    |\scrM_x(t_1)| &\leq \sum_{y\in [d]} \int_{t_0}^{t_1} \big[|\scrM_y(t) \gamma_x^*(y,\Delta_y v(t))| + |\mu_y(t) (\gamma_x^*(y,\Delta_y v(t)) - \gamma_x^*(y,\Delta_y u(t))) |\big]dt \\
    &\leq C \sum_{y\in [d]} \int_{t_0}^{t_1} \big[ |\scrM_y(t)| + \mu_y(t)|\Delta_y \scrQ(t)| \big]dt \\
    &\leq C \int_{t_0}^{t_1} \max_{y\in [d]} |\scrM_y(t)|dt + C \sum_{y\in [d]}\int_{t_0}^{t_1} \mu_y(t)|\Delta_y \scrQ(t)| dt.
\end{align*} 
Using Jensen's inequality and Gronwall's inequality: 
\begin{align}
    \|\scrM\|_\iy \leq C \int_{t_0}^{t_1} \sqrt{\sum_{y\in [d]} \mu(t,y) |\Delta_y \scrQ(t,\cdot)|^2} dt.
\end{align}  In the above display, using Jensen's inequality, \eqref{eqn:appx_dual}, and a supremum bound imply, 
\begin{align*}
    \|\scrM\|_{\iy} &\leq  C \int_{t_0}^{t_1} \sqrt{\sum_{y\in [d]} \mu_y(t) |\Delta_y \scrQ(t)|^2} dt\\
    &\leq C \Big(\int_{t_0}^{t_1} \sum_{y\in [d]} \mu_y(t) |\Delta_y \scrQ(t)|^2 dt\Big)^{1/2} \\
    &\leq C\Big(\int_{t_0}^{T} |e(t,\cdot,v(t)) \cdot \scrM(t)| dt + |e(T,\cdot,v(T)) \cdot \scrM(T)|\Big)^{1/2}\\
    &\leq C\big(\|e\|_{\iy}\|\scrM\|_{\iy} + \|e\|_{\iy}\|\scrM\|_{\iy}\big)^{1/2}\\
    &\leq C  \|e\|_{\iy}^{1/2}\|\scrM\|_{\iy}^{1/2}.
\end{align*} So by Young's inequality,
\begin{align*}
    \|\scrM\|_\iy \leq C\|e\|_{\iy}.
\end{align*} Combining this estimate with \eqref{eqn:q_bound}, and using \eqref{eqn:error_conds},
we obtain:
\begin{align*}
    \|\scrQ\|_\iy + \|\scrM\|_\iy \leq C\|e\|_\iy.
\end{align*} 
The above reasoning holds for every $(t_0,\eta)$. Recalling the definitions of $\scrQ$, $v$, and \eqref{eqn:U_def}, we obtain \eqref{eqn:thm:dgm_bound}.
\qed

\section{Numerical experiments}\label{sec:numerics}

In this section, both the DBME and DGME algorithms were implemented in Python using TensorFlow 2. Both programs were run on the Great Lakes computing cluster, a high-performance computing cluster available for University of Michigan research. All algorithms were run on the cluster's standard nodes, each of which consists of thirty-six cores per node. We expect further improvement in run-time given a practitioner switches to a specialized, GPU-enabled node. We also note that, in the original formulation of the DGM, \cite{MR3874585} use an LSTM-like architecture while we are using a vanilla, fully-connected, feed-forward architecture. For all networks featured, we used four layers of sixty nodes each, with sigmoid activation function, excluding the input and output layers. The output layers used ELU. Code for the DGME and DBME algorithms, data for models used, and code used to create the visualizations in this section can be found on GitHub at \url{https://github.com/ethanzell/DGME-and-DBME-Algorithms}. 

\subsection{Example of Quadratic Cost} A classical example in MFG literature is that of quadratic costs. In this section, we adapt the examples of \cite[Example~1]{cec-pel2019} and \cite[Example~3.1]{bay-coh2019} to solve the corresponding master equation using the DGME and DBME algorithms. 

Namely, we set the running costs and terminal cost, respectively: 
\begin{align*}
    &f(x,a) := b\sum_{y\neq x} (a_y - 2)^2, \quad F(x,\eta) := \eta_x, \quad g(x,\eta) \equiv 0,
\end{align*}
where $b>0$ will be chosen later. Fix also $\A := [1,3]$. In our case, the Hamiltonian is:
\[
H(x,p):= \min_{a_x \in \A_{-x}^d} \Big\{\sum_{y\neq x} b(a_{xy} - 2)^2 + a_{xy} p_y \Big\}.
\] So when the minimum is attained in the interior of $\A^d_{-x}$, we have for all $y\neq x$ that:
\begin{align}\label{eqn:H_form}
\begin{split}
    &\gamma^*_y(x,p) = 2- \frac{p_y}{2b}, \quad H(x,p) = \sum_{y\neq x} \Big(2p_y - \frac{p_y^2}{4b}  \Big),
\end{split}
\end{align} and so:
\begin{align*}
    &\gamma_y^*(x,\Delta_x U(t_0,\cdot, \eta)) = 2 + \frac{U(t_0,x,\eta) - U(t_0,y,\eta)}{2b}, \\
    &H(x,\Delta_x U(t_0,\cdot, \eta)) = \sum_{y\neq x} \Big(2 [U(t_0,y,\eta) - U(t_0,x,\eta)] - \frac{[U(t_0,y,\eta) - U(t_0,x,\eta)]^2}{4b}  \Big) 
\end{align*}

Note that, should \eqref{eqn:H_form} hold, the Hamiltonian satisfies the regularity we assume at the very beginning of the paper in \eqref{eqn:H_regularity}. We will now explicitly verify the form of $H$. Along the way, we will show that this form for $H$ is not guaranteed for all problem parameter choices; however, we will prove that certain parameter choices will guarantee this form of $H$ and thus also the regularity required for the class of problems we consider. First, we will show why \eqref{eqn:H_form} holds when $|p| \leq 2b$.

The Hamiltonian has minimizer $a^*_x$ := $(a^*_{xy})_{y\in [d]}$:
\[
    a_{xy}^* = 
\begin{cases}
    3 & 2-\frac{p_y}{2b} \geq 3, \\
    2- \frac{p_y}{2b} & 2-\frac{p_y}{2b} \in \A, \\
    1 & 2-\frac{p_y}{2b} \leq 1.
\end{cases}
\]Note that: 
$2-\frac{p_y}{2b} \in \A$ occurs if and only if:
\begin{align}\label{eqn:H_nec}
  |p_y| \leq 2b.  
\end{align}

With this observation in mind, we are now interested in finding values of $b,T$ such that \[|[\Delta_x U(t_0,\cdot, \eta)]_y| \leq 2b,\] as this will lead to the desired form of the Hamiltonian in \eqref{eqn:H_form}.

The $p$-argument of $H$ will always be $\Delta_x U(t_0,\cdot,\eta)$ for some $(t_0,x,\eta) \in [0,T] \times [d]\times \calP([d])$ and so we are interested in a bound on this quantity. We will derive a bound for $|[\Delta_x U(t_0,\cdot, \eta)]_y|$ in terms of $T$. Then, by selecting $T$ and $b$ in concert with one another, we may guarantee \eqref{eqn:H_nec} and therefore confirm \eqref{eqn:H_form} holds for all inputs to $H$.

Let $\beta$ be the control that always chooses rate $2$. Using \eqref{eqn:me_cost_equivalence}, the fact that $\gamma^*$ is a minimizer, the choice of $f$, and the fact that $|F|\leq 1$, for $y\neq x$ we have:
\begin{align*}
    |[\Delta_x U(t_0,\cdot, \eta)]_y| 
    &= \Big|\EE_{(y,\eta)} \Big(\int_{t_0}^T f(\calX_t, \gamma^*(t,\calX_t)) + F(\calX_t, \calL(\calX_t)) dt\Big) \\
        &\qquad + \EE_{(x,\eta)}\Big(\int_{t_0}^T f(\calX_t, \gamma^*(t,\calX_t)) + F(\calX_t, \calL(\calX_t)) dt\Big)\Big|\\
    &\leq \Big|\EE_{(y,\eta)} \Big(\int_{t_0}^T f(\calX_t^\beta, \beta) + F(\calX_t^\beta, \calL(\calX_t^\beta)) dt\Big)\Big| \\
        &\qquad + \Big|\EE_{(x,\eta)}\Big(\int_{t_0}^T f(\calX_t^\beta, \beta) + F(\calX_t^\beta, \calL(\calX_t^\beta)) dt\Big)\Big|\\
    &\leq 2T.
\end{align*} So we can guarantee that $|[\Delta_x U(t_0,\cdot, \eta)]_y| \leq 2b$ by requiring that:
$
   T\leq b.
$ 
We thus choose $T=1/2$ and $b=4$ and this guarantees the smoothness of the Hamiltonian for any value of the master equation solution and also for any number of states $d$. While we could have chosen other values, these choices result in an easily interpretable picture as a consequence of the strong cost of rate selection and more prompt horizon. 

\subsection{Quadratic Cost---Low Dimensional Results}

Now we present numerical results for the low-dimensional cases $d=2, 3$ corresponding to this example. We begin with the case $d=2$; the following graphs are from the DGME.

\subsubsection{The master equation's solution for $d=2$, visualizations via DGME}

Here, we take an in-depth look at Figure \ref{fig:U_d_2}. The horizontal axis of the graph indicates $\mu_1(t)$, the fraction of $\mu$ in state $1$ at time $t$. The vertical axis is the value of the approximated master equation solution. The neural network is evaluated at a series of time steps and each time step is given a distinct  color, as indicated by the legend on the right.

\begin{figure}
    \centering
    \begin{tabular}{cc}
\includegraphics[width=75mm]{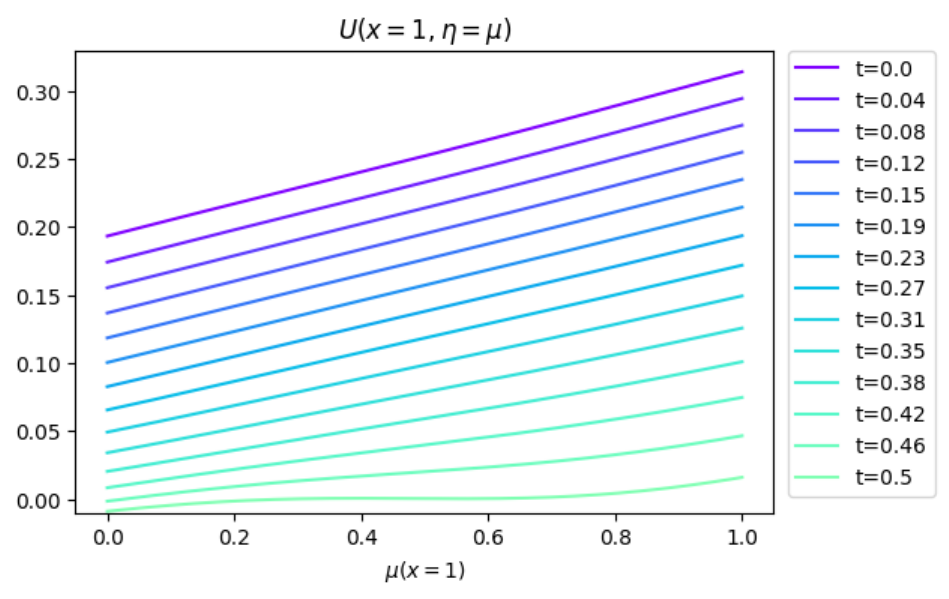}  & \includegraphics[width=75mm]{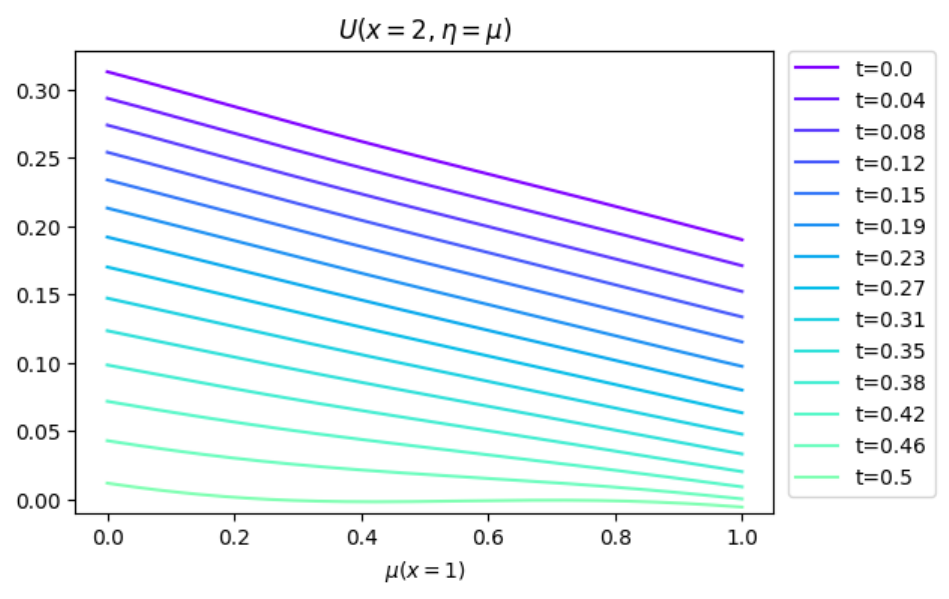} \\
\end{tabular}
\noindent
    \caption{The master equation's solution over time for $d=2$.}
    \label{fig:U_d_2}
\end{figure}

While there is no known explicit formula to sanity check our solution, we can make several qualitative judgements at the start. At the terminal time $T=1/2$, the graph indicates that the function is approximately zero; this agrees with the fact that the terminal condition is $g\equiv 0$ and $U(T,x,\eta) = g(x,\eta)= 0$. Second, the graph appears to respect the monotonicity of the cost. That is, for any fixed time $t_0\in [0,T]$ the graph satisfies: \begin{align*}
    U(t_0, 1, \eta) \leq U(t_0, 1, \eta+(1,-1)\eps),
\end{align*} for all $\eta$ in the interior of $\calP([d])$, and whenever $\eps \ge 0$ is small enough so that $U$ is defined. The reverse inequality is true for $x=2$. We expect this cost monotonicity to come from the mean field cost $F$ since a representative player experiences a lower cost when a smaller fraction from the distribution $\mu$ are in his or her own state. Finally, by \eqref{eqn:me_cost_equivalence} and definition of $J$, we expect $U(t,x,\eta)\to 0$ monotonically as $t\to T$ and indeed this is the case.

\subsubsection{Measuring DGME and DBME Losses}

In Figure \ref{fig:losses} we give loss graphs for the DGME and DBME algorithms. The DGME algorithm trains one neural network to the entire master equation and so its loss graph is straightforward: for every epoch, we average the losses over the thirty steps in that epoch and present the losses. The DBME however is the result of neural networks being trained on the output of a neural network at a future time step, and so the DBME returns $N$ networks. Moreover, recall that the final DBME ``neural network" is set to be the terminal cost and so the first neural network that is trained with an actual loss function is the penultimate network $\calU_{N-1}$. In practice, we found that training $\calU_{N-1}$ for more epochs than the other networks resulted in better results for networks closer to the initial time. Therefore, Figure \ref{fig:losses} also presents three graphs for the DBME trained with a constant-sized partition of $|\pi|=0.01$. The first DBME graph depicts the losses of $\calU_{49}$ by itself plotted on a log scale, while the second graph gives the losses for the networks $\calU_{0}, \calU_{10}, \calU_{20}$, $\calU_{30}$, and $\calU_{40}$.

\begin{figure}
    \centering
    \begin{tabular}{c}
\includegraphics[width=70mm]{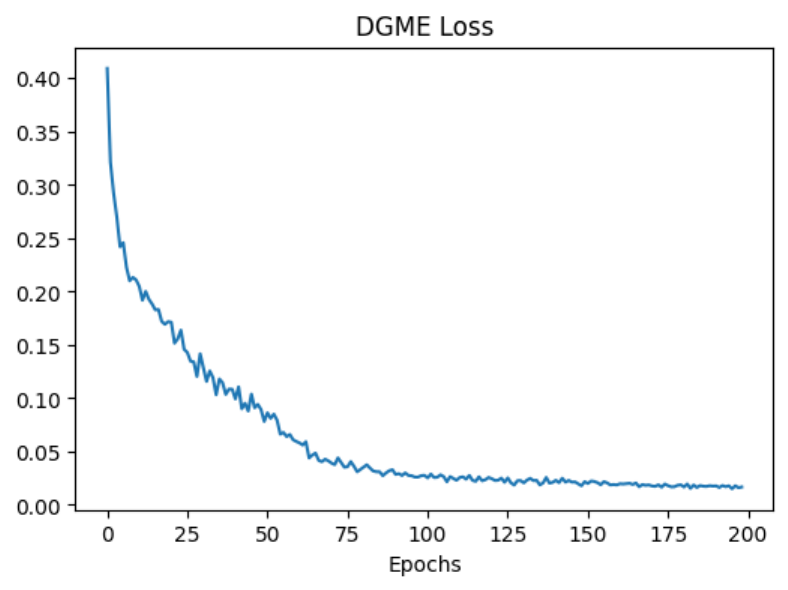} 
    \end{tabular}
    \begin{tabular}{cc}
\includegraphics[width=70mm]{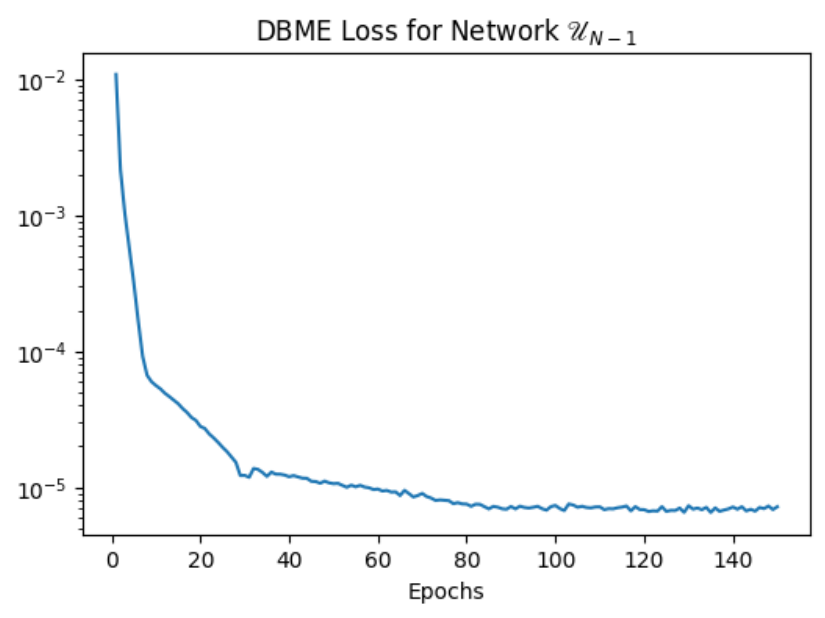} &  \includegraphics[width=67mm]{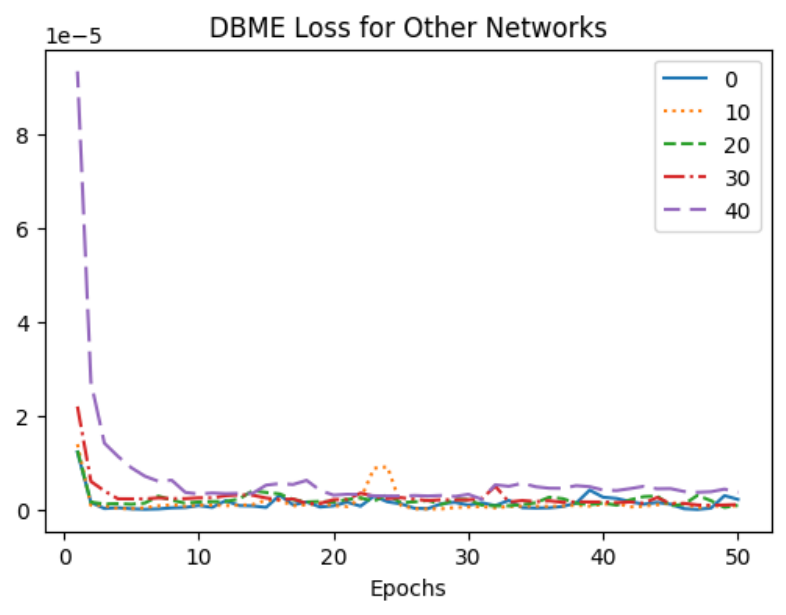}
\end{tabular}
\noindent
    \caption{At the top: DGME losses for $d=2$, averaged over thirty steps per epoch. At the bottom left, the average DBME losses for the penultimate network plotted on a log scale, and at the bottom right, the average losses for some DBME networks. }
    \label{fig:losses}
\end{figure}

\subsubsection{Agreement between the DGME and DBME algorithms}

The result from the DBME appears similar given the partition step size is small enough. In Figure \ref{fig:avg_diff} we graph the error between the prior DGME model and its DBME counterpart over all times in the partition $t_i \in \pi$.  The error is computed as the average difference over the state space $[d]\times\calP([d])$ and the horizontal axis is the difference on $[0,T]$.

\begin{figure}
    \centering
    \includegraphics[width=90mm]{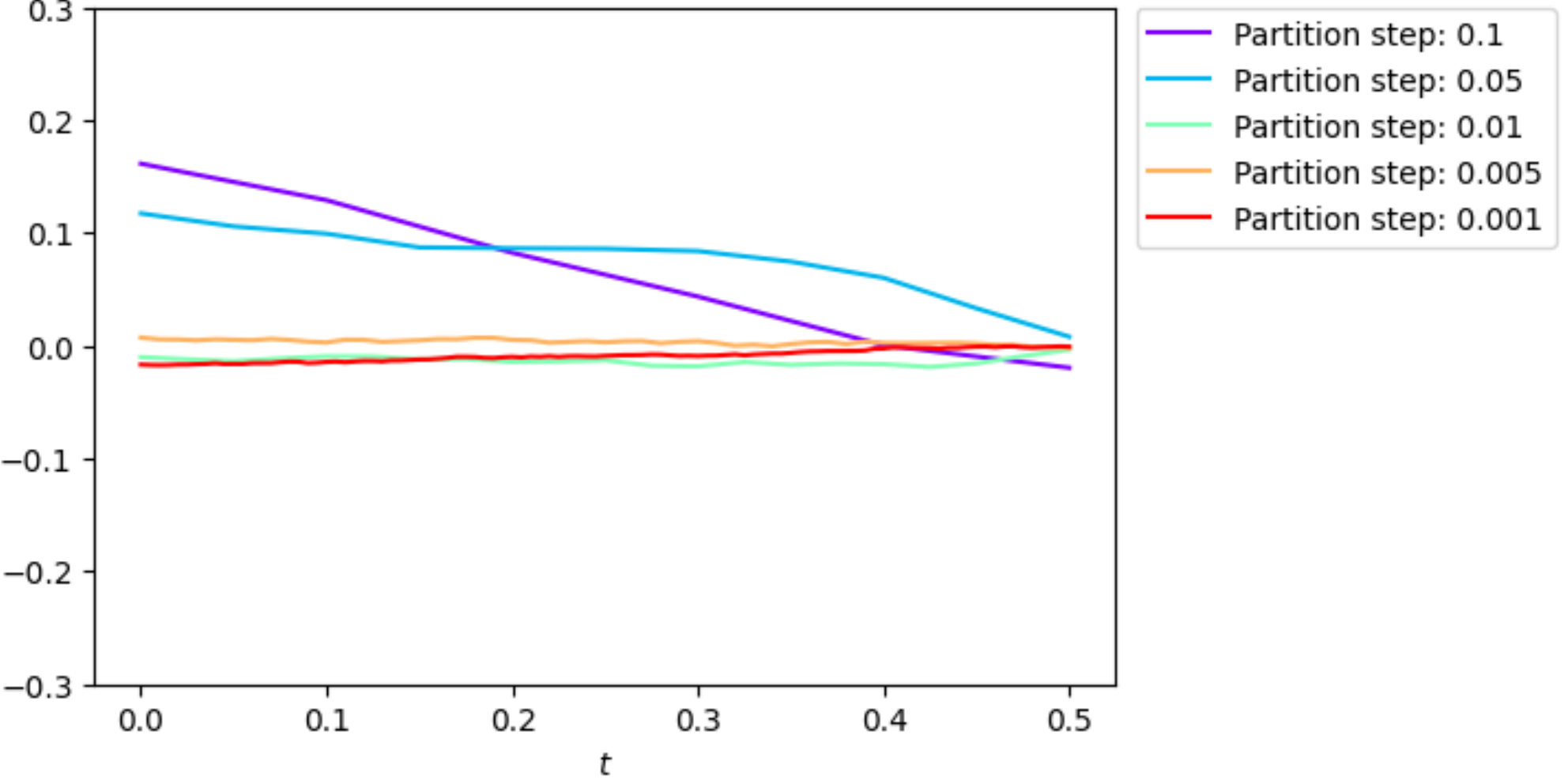}
    \caption{For $d=2$, the average difference between the DBME and DGME solutions as the partition step decreases.}
    \label{fig:avg_diff}
\end{figure}

In Figure \ref{fig:avg_diff}, for the partitions $\pi$ such that $|\pi| \leq 0.01$, the DGME and DBME algorithms agree on the value of $U$ on average. For larger values of $|\pi|$, there is some error that propagates backward and worsens the estimate at the initial time $t=0$.

\subsubsection{The master equation's solution for $d=3$, visualizations via DGME}

Here, we break down the result in Figure \ref{fig:U_d_3}. It is qualitatively similar to Figure \ref{fig:U_d_2} (for $d=2$) but we present it differently. In Figure \ref{fig:U_d_3}, each sub-plot corresponds to a different time. The simplex $\mathcal{P}([3])$ is identified with a triangle in dimension 2, where the x-axis corresponds to the density in state $2$ and the y-axis corresponds to the density in state $1$. The horizontal axis is $\mu_2(t)$ while the vertical axis is $\mu_1(t)$. The color intensity gives the value of the approximate master equation solution. For clarification, the bottom left corner is the point in $\calP([d])$ where $\mu_3(t) = 1$. In the first image of Figure \ref{fig:U_d_3} we notice the same cost monotonicity as in the two-dimensional case. Meanwhile, we note that the final image (where $t=0.5$) is approximately zero as desired.

\begin{figure}
\centering
\begin{tabular}{ccc}
\includegraphics[width=0.33\linewidth]{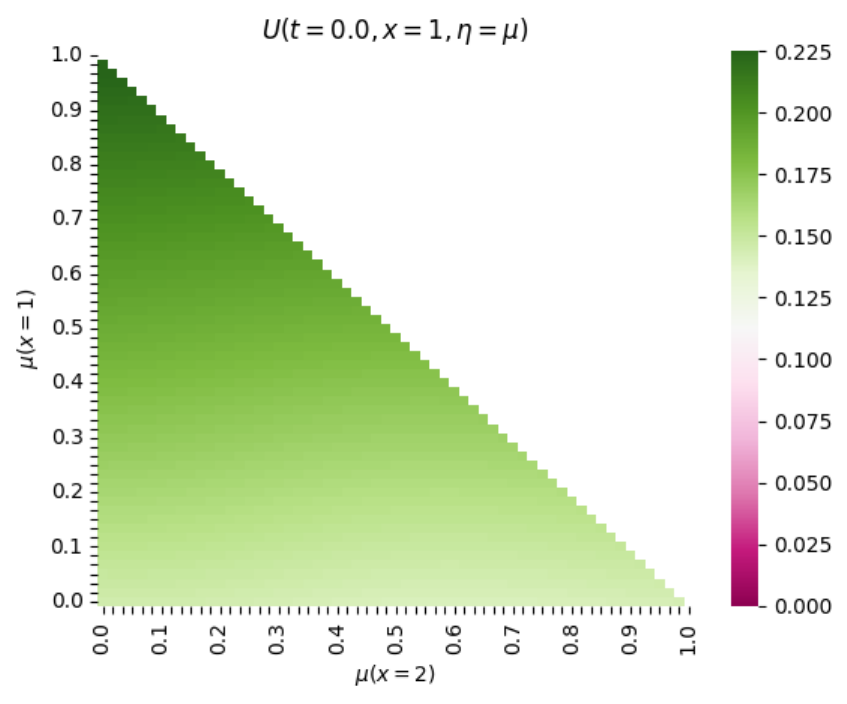}  & \includegraphics[width=0.33\linewidth]{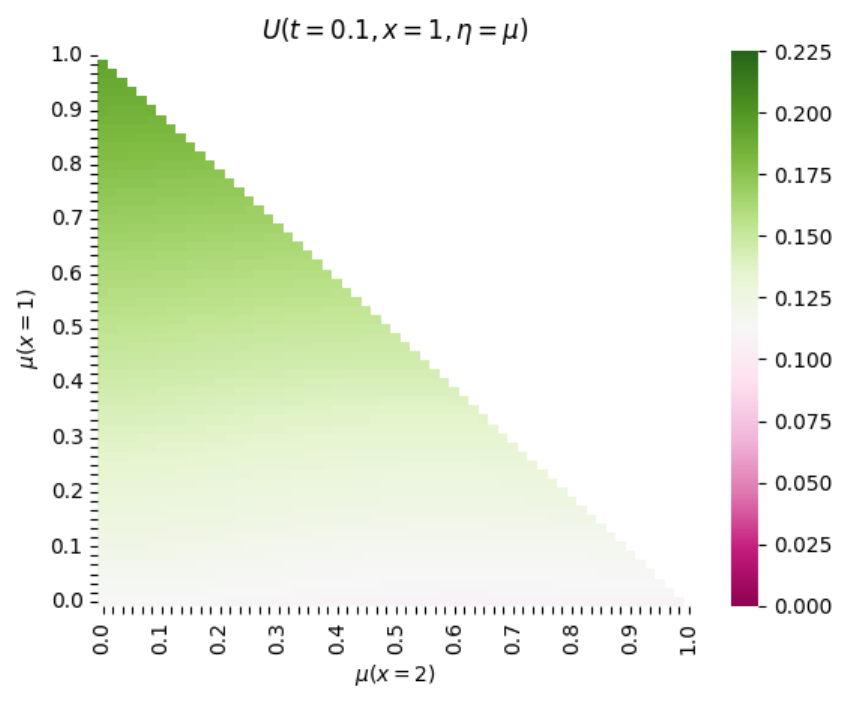} &
\includegraphics[width=0.33\linewidth]{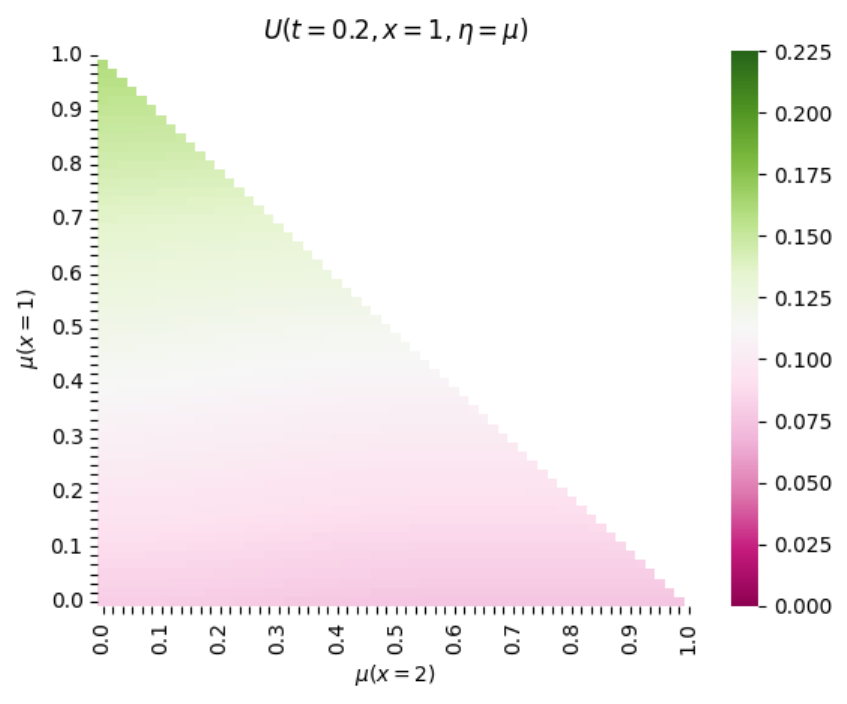} \\ 
\includegraphics[width=0.33\linewidth]{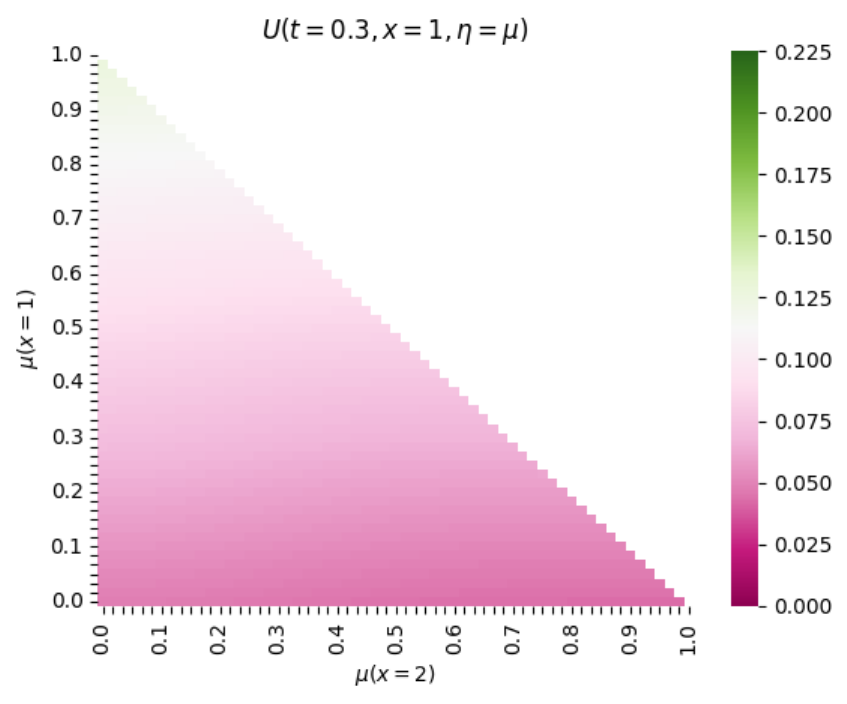} &
\includegraphics[width=0.33\linewidth]{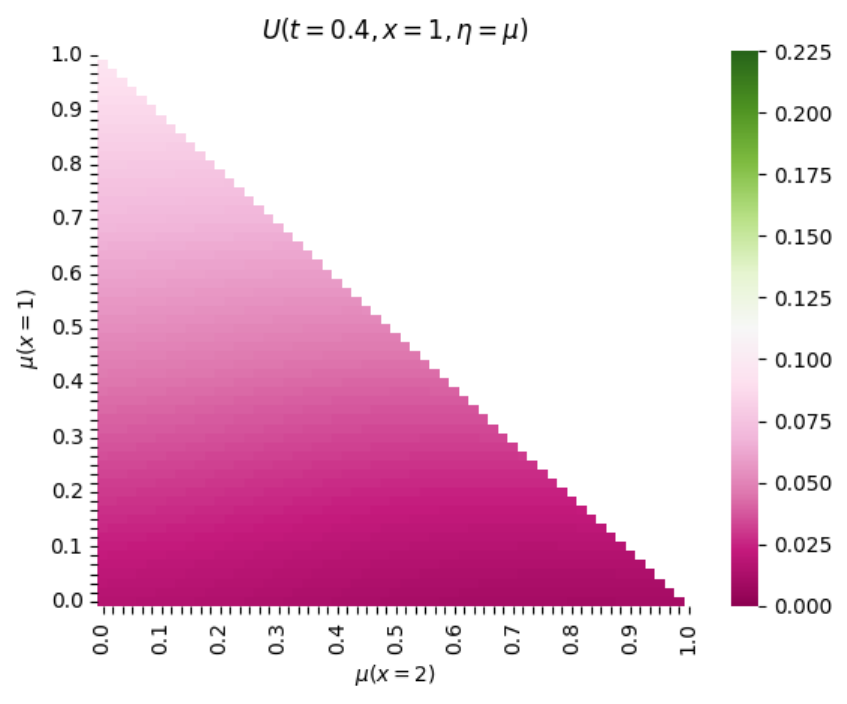} & 
\includegraphics[width=0.33\linewidth]{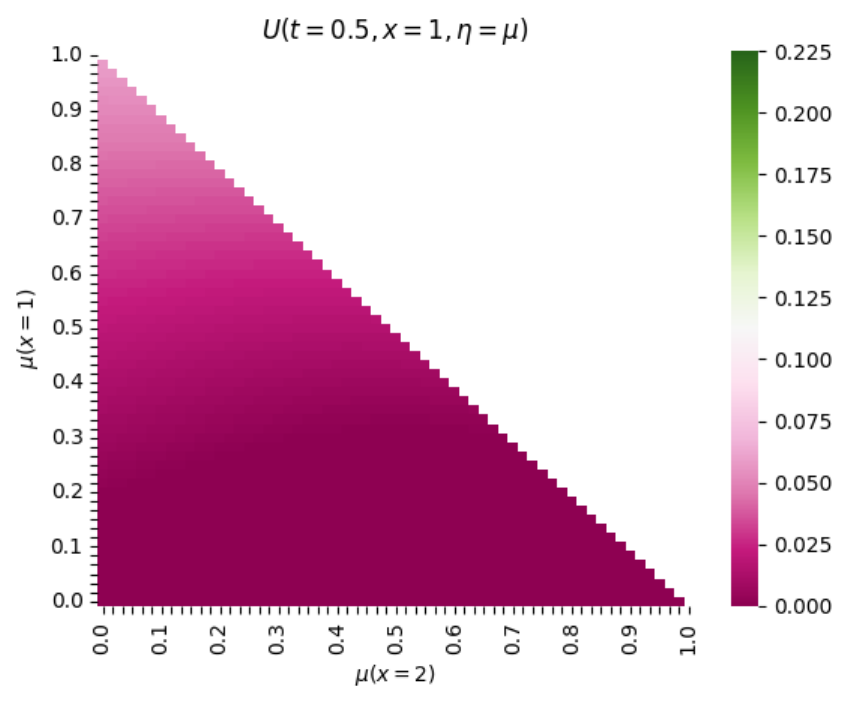} 
\end{tabular}
\caption{The master equation's solution on the simplex $\calP([3])$ over time.}
\label{fig:U_d_3}
\end{figure}

Once we have approximately learned the master equation solution, we may compute the corresponding control by plugging the solution into the rate selector $\gamma^*$. Then, we are able to plot the trajectory of the player population over time. In Figure \ref{fig:mu_traj}, we plot the trajectory as a proportion for a chosen initial state, and then on the simplex $\calP([3])$. On $\calP([3])$, the dark purple end of the line corresponds to the initial state $\mu(t=0)$, while the light yellow end corresponds to the terminal state $\mu(t=T)$. 

\begin{figure}
    \centering
    \begin{tabular}{c c}
\includegraphics[width=80mm]{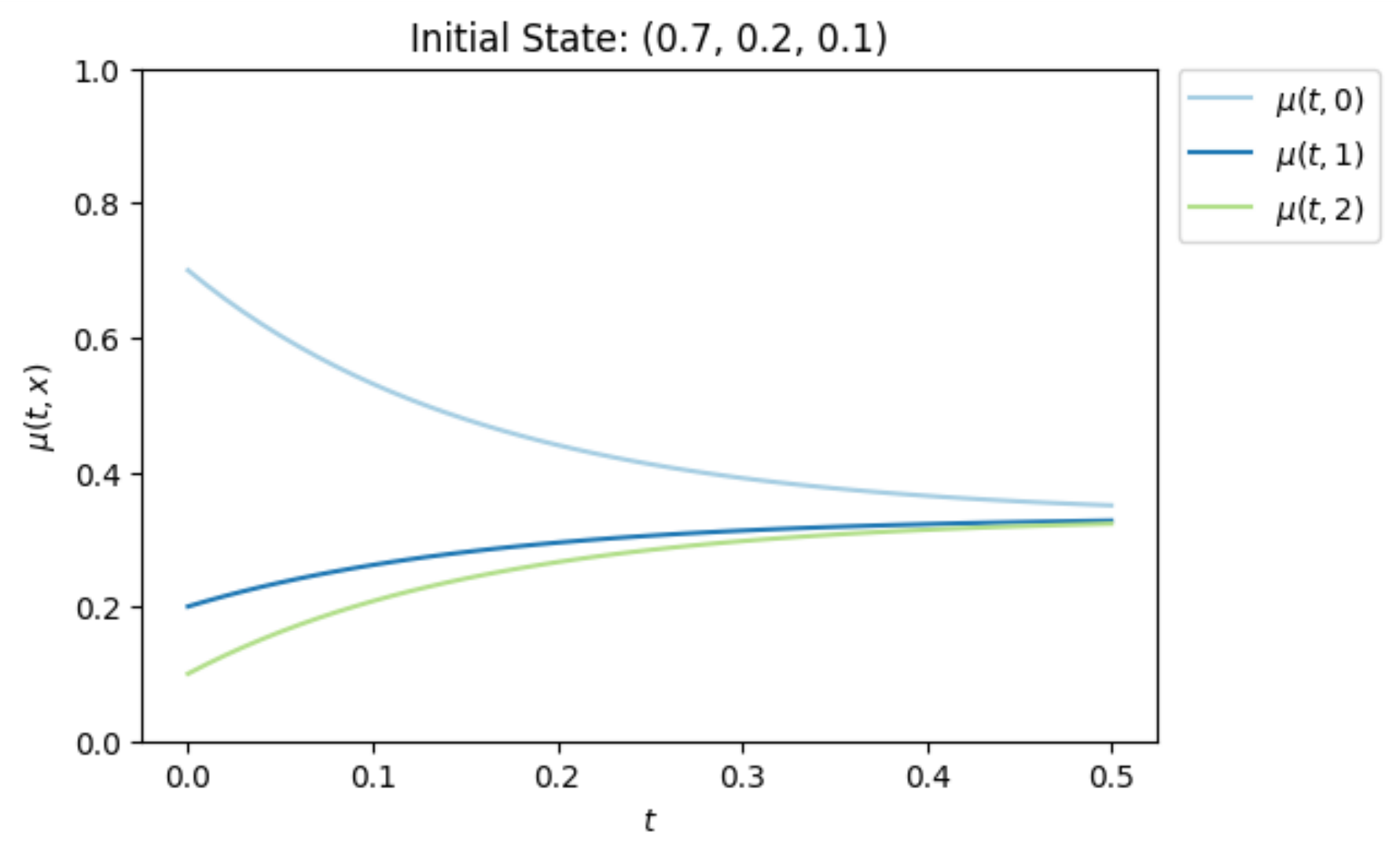}   &  \includegraphics[width=50mm]{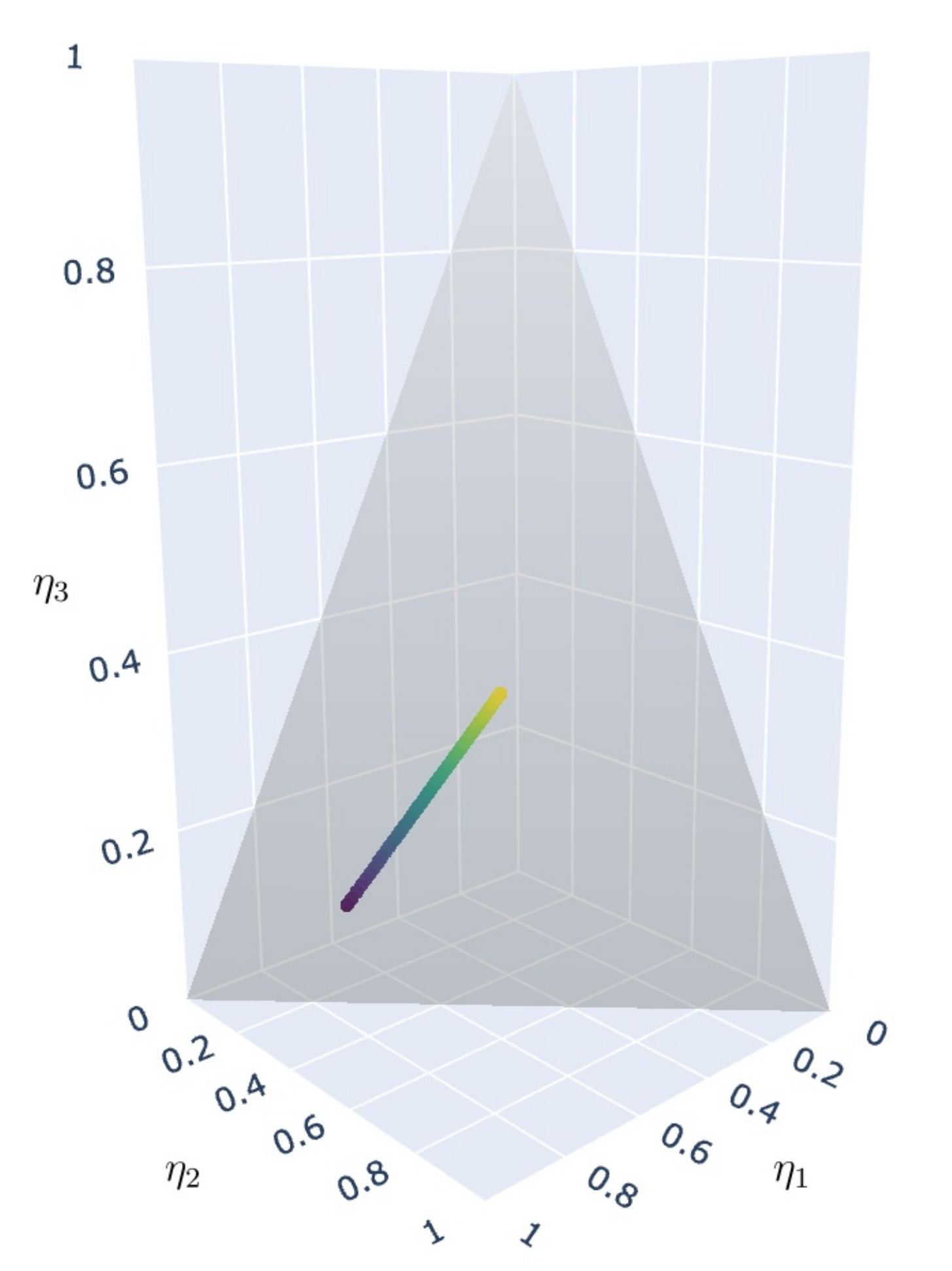} \\
\end{tabular}
    \caption{Two ways of visualizing the MFG equilibrium $\mu^{0, (.7,.2,.1)}$.}
    \label{fig:mu_traj}
\end{figure}

\subsection{Quadratic Cost---High Dimensional Results}

How the DGME and DBME deal with the curse of dimensionality is of particular interest. To study the run-time efficiency as dimension increases, we compute the relative increase in run-time for each dimension by dividing the corresponding running time by the running time required for the previous
dimension. 

\begin{figure}
    \centering
    \includegraphics[width=70mm]{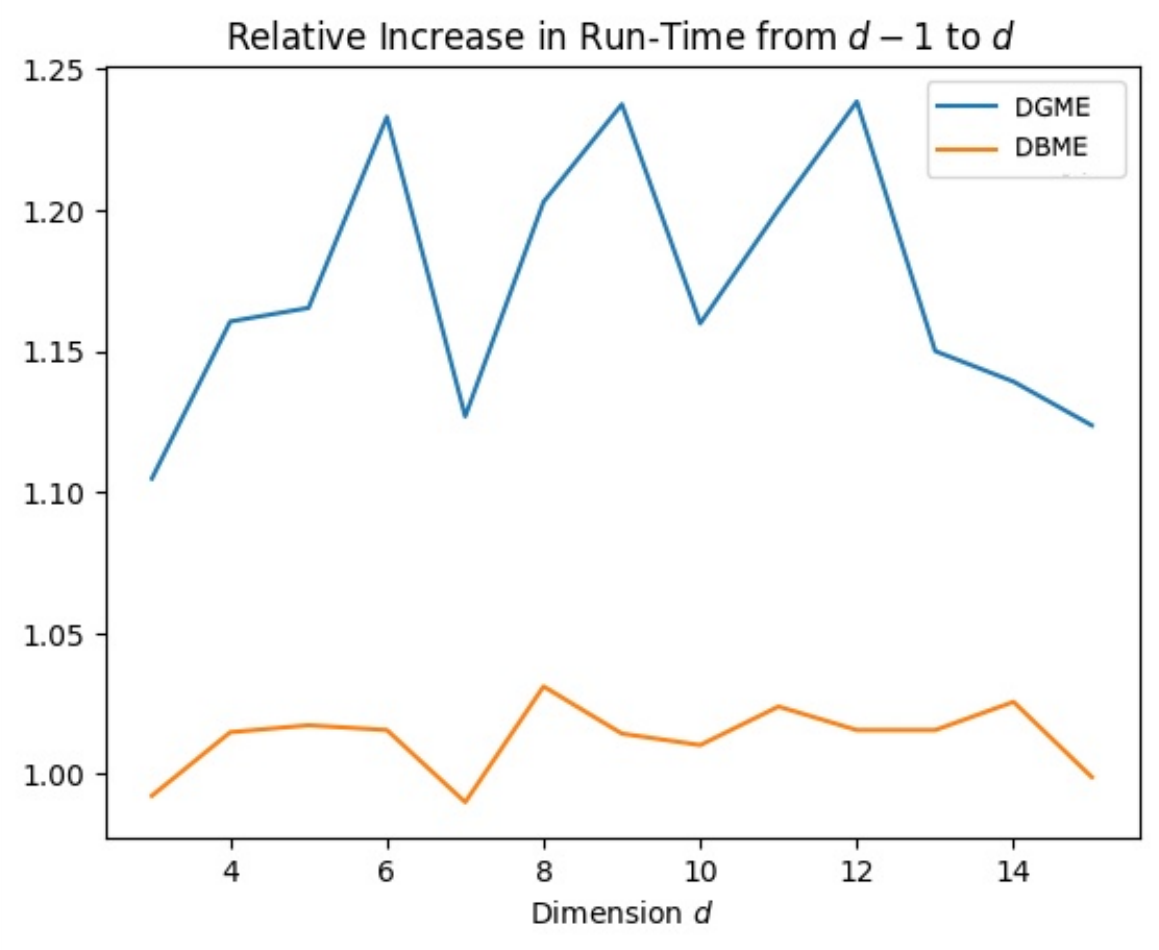}
    \caption{Comparing the increase in run-time of each algorithm as dimension increases, and when requiring the same loss.}
    \label{fig:run_time_increase}
\end{figure}

In Figure \ref{fig:run_time_increase}, we note that the DBME run-time increases at around the rate of increase in dimension; meanwhile, the DGME run-time increases faster with an equivalent increase in dimension. The lower relative increase for the DBME run-time indicates that it may be better suited to high-dimensional problems. Moreover, the rate of relative increase for the DBME agrees with other known deep backward schemes; see for example the deep backward scheme for an optimal switching problem studied in \cite[Figure~2]{2210.03045}. 

We also note that the size of our state space increases quadratically in $d$. Namely, we are considering $[d]\times \calP([d])$. The simplex $\calP([d])$ is $(d-1)$-dimensional and so the size of our state space is $d^2-d = \calO(d^2)$.

\subsection{Cybersecurity Example} 

In this section, we study another popular MFG model that represents the cybersecurity of a network of computers. The cybersecurity model was introduced by \cite{MR3575619}, revisited by \cite[Section~7.2.3]{CarmonaDelarue_book_I}, and computed numerically by \cite[Section~7.2]{MR4368188}. We recall the problem here, detail its master equation, and solve using the DGME and DBME methods. 

In the cybersecurity MFG model, a continuum of agents try to minimize their costs that come from two sources: being infected with a computer virus, and paying for computer security software. An agent's computer can be infected by another infected computer or by a hacker (considered exogenous to the model). Intuitively, the mean field interaction comes through the infection of a computer by other infected computers. Once infected, a computer can recover back to its original state. Agents that elect to defend their computers with the software are infected at a slower rate than undefended computers. 

So, every agent has two internal states: defended/undefended ($D,U$) and susceptible/infected ($S,I$). Hence there are four states ($d=4$):
$
    [d]:=\{DS, US, DI, UI\}.
$  
Every agent pays a cost while infected, $k_I>0$, and a cost while defending, $k_D>0$. The agent pays the most cost per second when both infected and defending. Specifically, the running cost $f$ is given by:
\begin{align*}
    f(x,a):= k_D \1_{\{DS, DI\}} (x) + k_I \1_{\{DI, UI\}}(x).
\end{align*} The mean field cost and terminal cost are both set to zero,
$
    F=g\equiv 0.
$  
The agent's only choice is to switch his or her defended or undefended status; this occurs with rate $\rho$, and the corresponding action set is $\A = \{0,1\}$. 

The rate a hacker hacks into any computer is determined by the parameter $v_H$ which for defended or undefended computers is augmented by $q^D_{H}$ and $q^U_H$, respectively. Correspondingly, the recovery parameters are $q^D_R$ and $q^U_R$. Infections from defended and undefended computers are augmented by the parameters $\beta_{DD}, \beta_{UU}, \beta_{UD}, \beta_{DU}>0$ in the following rate matrix. Namely, agents transition according to the rate matrix:
\begin{align*}
    (M(x,y; a))_{x,y\in [d]}:= \begin{bmatrix}
        \ast & \rho a & P^\eta_{DS, DI} & 0\\
        \rho a & \ast & 0 & P^\eta_{US, UI} \\
        q^D_R & 0 & \ast & \rho a \\
        0 &  q^U_R & \rho a &\ast
    \end{bmatrix},
\end{align*} where
\begin{align*}
    & P^\eta_{DS, DI} := v_H q^D_H + \beta_{DD} \eta(DI) + \beta_{UD} \eta(UI), \\
    & P^\eta_{US, UI} := v_H q^U_H + \beta_{UU} \eta(UI) + \beta_{DU} \eta(DI),
\end{align*} and where the asterisk denotes the negative of the sum of the elements along that row. Note that agents can only determine their rate matrix by choosing either $a=0$ or $1$ and all other parameters are fixed. That said, $P^\eta_{DS, DI}$ and $P^\eta_{US, UI}$ change based on the proportion of infected undefended and defended agents' computers, respectively; it is through these terms that the agents experience a mean field-type interaction. 

With this said, we can write down the pre-Hamiltonians, the functions inside the Hamiltonian's minimum, for each state:
\begin{align*}
    &\tilde H(DS, \Delta_{DS} U(t,\cdot, \eta); a) := k_D + P^\eta_{DS, DI} (U(t, DI, \eta) - U(t, DS, \eta)) \\
    &\qquad\qquad\qquad\qquad\qquad\qquad\qquad\qquad\qquad\qquad +\rho a (U(t,US, \eta) - U(t, DS, \eta)),\\
    &\tilde H(US, \Delta_{US} U(t,\cdot, \eta); a) := \rho a ( U(t, DS, \eta) - U(t, US, \eta)) + P^\eta_{US, UI} (U(t,UI, \eta) - U(t, US, \eta)) , \\
    &\tilde H(DI, \Delta_{DI} U(t,\cdot, \eta); a) := k_D + k_I + \rho a (U(t,UI, \eta) - U(t, DI, \eta)) \\
    &\qquad\qquad\qquad\qquad\qquad\qquad\qquad\qquad\qquad\qquad+ q^D_R (U(t,DS, \eta) - U(t,DI, \eta)), \\
    &\tilde H(UI, \Delta_{UI} U(t,\cdot, \eta) ; a) := k_I + \rho a (U(t,DI, \eta) - U(t, UI, \eta)) + q^U_R (U(t,US,\eta) - U(t,UI, \eta)).
\end{align*} Taking the minimum over $a\in \{0,1\}$ yields the usual Hamiltonian. We can then write the optimal rate matrix:
\begin{align*}
    &(\gamma^*_y(x, \Delta_x U(t,\cdot, \eta)))_{x,y\in [d]} :=\\ &\quad\begin{bmatrix}
        \ast & \rho \1_{\{U(t,US,\eta) < U(t,DS, \eta)\}} &P^\eta_{DS, DI}  & 0\\
        \rho \1_{\{U(t,DS,\eta) < U(t,US, \eta)\}} & \ast & 0 & P^\eta_{US, UI} \\
        q^D_R & 0 & \ast & \rho \1_{\{U(t,UI,\eta) < U(t,DI, \eta)\}} \\
        0  & q^U_R & \rho \1_{\{U(t,DI,\eta) < U(t,UI, \eta)\}} & \ast
    \end{bmatrix}.
\end{align*} 
So, we can write the master equation as: for all $t\in [0,T]$, $x\in [d]$, and $\eta \in\calP([d])$,
\begin{align*}
    &\partial_t U(t,x,\eta) + \sum_{y,z \in [d]} \eta_y D^\eta_{yz} U(t,x,\eta) \gamma^*_z(y, \Delta_y U(t,\cdot, \eta)) + H(x,\Delta_x U(t,\cdot, \eta)) = 0, \\
    &U(T,x,\eta) =0.
\end{align*} 

By running either algorithm then, we can reconstruct the optimal control and hence the trajectory of the measure. Recall the MFG system \eqref{eqn:mfg} and its solution $(u,\mu)$ (removing the superscripts). We demonstrate the measure trajectory $\mu$ in the first row of Figure \ref{fig:two_measures}, using the solution computed by the DGME. The corresponding cost $u$ is displayed in the second row of Figure \ref{fig:two_measures}. In Figure \ref{fig:two_measures}, the legend depicts the four states $DS$, $US$, $DI$, and $UI$; the prefix of DGM shows the DGME's result against a finite difference method solver. 
Further results for other initial distributions are provided in Appendix~\ref{app:more-plots-cybersecurity}.

\begin{figure}
    \centering
    \begin{tabular}{c}
    \includegraphics[width = 90mm]{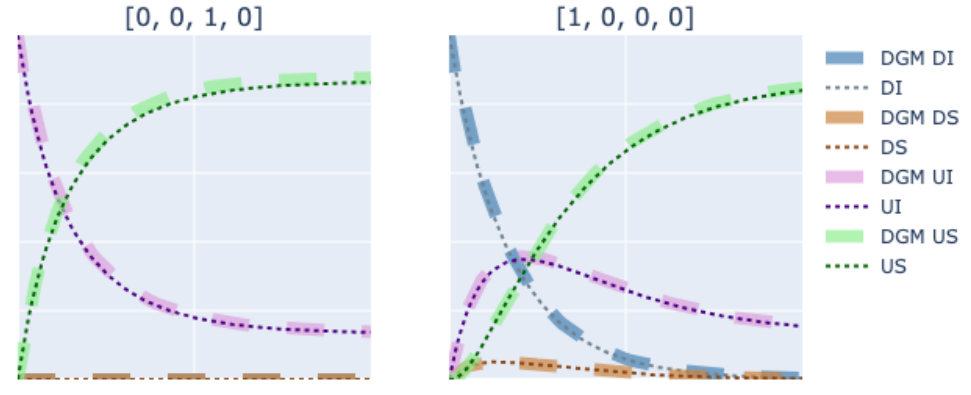}  \\
    \includegraphics[width = 90mm]{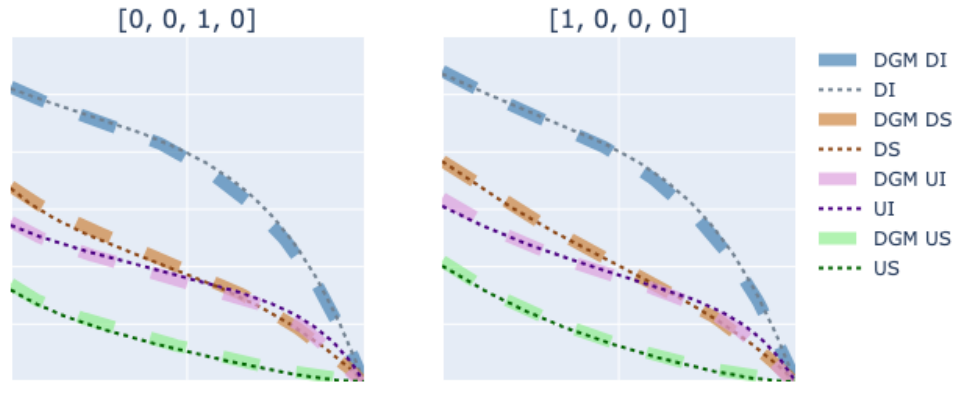}
    \end{tabular}
    
    \caption{The top row gives two examples of $\mu$ given different initial conditions. The bottom row gives the corresponding $u$ for these measures.}
    \label{fig:two_measures}
\end{figure}

\section{Conclusion}

Motivated by the computation of Nash equilibria in games with many players, we proposed two algorithms, the DBME and the DGME, that can numerically solve the master equation of finite-state MFGs. For each method, we proved two results: the loss can be made arbitrarily small by some neural network, and conversely, if a neural network makes the loss small, then this neural network is close to the real solution of the master equation. 
Besides the theoretical analysis, we provided two in-depth numerical examples applying the methods and compare them at different dimensions. We found that the relative increase in computation time is superior for the DBME. Our numerical tests show that the proposed methods allow us to learn the master equation solution for any initial distribution, in contrast with traditional forward-backward methods, which work only for a single fixed initial distribution. Hence our methods open new potential applications of MFGs, when the initial distribution is not known in advance or when the population distribution may deviate from its expected trajectory.

Several directions are left for future work. First, our methods have the same limitations as the DBDP and the DGM, and will benefit from progress made on these methods. For instance, performance might be improved with a better understanding of how to choose the neural network architecture depending on the PDE under consideration. It should also be possible to improve the training speed by a suitable choice of sampling method. In particular, we leave for future work the incorporation of active sampling to the proposed methods. 

On the theoretical side, we have mostly focused on the approximation error. It would be interesting to obtain an error rate in terms of the number of neurons and the dimension, that is, the number of states of the MFG. At this stage, numerical evidence suggests that our approaches work well even with a large number of states. However, as for many other methods, such as the DGM and the DBDP in particular, it is not clear how to prove rigorously that neural networks break the curse of dimensionality. Furthermore, it would also be interesting to study other error sources, whether from sampling regimes or gradient descent convergence. Ultimately, the goal would be to comprehensively understand the generalization error of neural network-based methods for master equations.

Last, real-world applications are beyond the scope of the present paper. This would involve studying in detail the link between the master equation in MFGs and systems of Bellman PDEs in finite-player games. Such systems can also be solved by deep learning. When the number of players is small, it is doable and more accurate to solve directly the PDE system. But as the number of players grows, it becomes easier to solve the master equation, and the MFG provides a good approximation of the finite-player game equilibrium. However, the precise trade-off depends on the model under consideration, which is left for future work in a case-by-case study.

\vspace{5pt}
{\bf Acknowledgement.} We thank the anonymous AE and the referee for their insightful comments, which helped us improve our paper.

\section*{Appendix A. Proof of Lemma~\ref{lemma:sample_conv}}

    Let $|\cdot|_1$ denote the $L^1$-norm in $\R^d$ and define $A:= |\calK|$. Choose:
    \[
    \bar\kappa \in \argmax_{\kappa\in\calP([d])}\tilde L_i(\kappa;\theta^i),
    \] so that:
    \[
    \tilde L_i(\bar\kappa ; \theta^i) = L_i(\theta^i).
    \]
    Since $\calK\subsetneq \calP([d])$,
    \begin{align*}
        0\leq  \tilde L_i(\bar\kappa;\theta^i) - \tilde L_i(\hat\kappa ; \theta^i). 
    \end{align*} Pick $\kappa_0 \in\calK$ such that:
    \[
    \kappa_0 \in \argmin_{\kappa \in \calK} |\kappa - \bar\kappa|.
    \] Then,
    \begin{align*}
        0&\leq  \tilde L_i(\bar\kappa;\theta^i) - \tilde L_i(\hat\kappa ; \theta^i) 
        \leq  \tilde L_i(\bar\kappa;\theta^i) - \tilde L_i(\kappa_0 ; \theta^i) .
    \end{align*} Using the above display, the definition of $\bar \kappa$, and the assumption that the neural networks are uniformly Lipschitz, as well as the Lipschitz continuity of $\bar H$, there exists $C>0$ depending on these Lipschitz constants such that: 
    \begin{align}\label{eqn:loss_conv_1}
        |L_i(\theta^i) - \tilde L_i(\hat \kappa;\theta^i)|&\leq C |\bar \kappa - \kappa_0|.
    \end{align} So it suffices to bound the right hand side of the above display in expectation. Let $p:\R^d\to\R^{d-1}$ be the projection onto the first $(d-1)$-coordinates of a vector in $\R^d$. Using that $\bar\kappa$, $\kappa_0 \in\calP([d])$ and the Cauchy--Schwartz inequality:
    \begin{align*}
        |\bar \kappa - \kappa_0|^2 &= \sum_{x\in [d]} (\bar\kappa_x - \kappa_{0,x})^2 \\
        &= \sum_{x = 1}^{d-1} (\bar\kappa_x - \kappa_{0,x})^2 + \Big(1 - \sum_{x=1}^{d-1} \bar\kappa_x - 1 + \sum_{x=1}^{d-1} \kappa_{0,x} \Big)^2 \\
        &= \sum_{x = 1}^{d-1} (\bar\kappa_x - \kappa_{0,x})^2 + \Big(\sum_{x=1}^{d-1} (\kappa_{0,x} - \bar\kappa_x) \Big)^2 \\
        &\leq \sum_{x = 1}^{d-1} (\bar\kappa_x - \kappa_{0,x})^2  + (d-1) \sum_{x = 1}^{d-1} (\bar\kappa_x - \kappa_{0,x})^2 \\
        &= d \sum_{x = 1}^{d-1} (\bar\kappa_x - \kappa_{0,x})^2 \\
        &= d |p(\bar\kappa - \kappa_0)|^2. 
    \end{align*} Plugging this result into \eqref{eqn:loss_conv_1},
    \begin{align*}
        |L_i(\theta^i) - \tilde L_i(\hat \kappa;\theta^i)|&\leq C\sqrt{d} |p(\bar \kappa - \kappa_0)|.
    \end{align*} Recalling that $\kappa_0$ is a minimum, noting that the difference is bounded above by the case when $p(\bar\kappa) = 0$, then since $\kappa_x \leq 1$ for all $x\in [d]$, we have:
    \begin{align*}
        \EE |p(\bar\kappa - \kappa_0)| &= \EE\Big[ \min_{\kappa \in\calK} |p(\bar\kappa - \kappa)|\Big] \leq \EE\Big[ \min_{\kappa \in\calK} |p(\kappa)|\Big] \leq \EE\Big[ \min_{\kappa \in\calK} |p(\kappa)|_1\Big]. \\ 
    \end{align*} Combining the two previous displays,
    \begin{align}\label{eqn:loss_conv_2}
        \EE|L_i(\theta^i) - \tilde L(\hat\kappa ; \theta^i)| \leq C\sqrt{d} \EE\Big[ \min_{\kappa \in\calK} |p(\kappa)|_1\Big].
    \end{align} Recall that sampling $\kappa$ uniformly on the simplex $\calP([d])$ is equivalent to creating $d$ exponentially distributed random variables $U_1, \dots, U_d$ (with rate $1$), and then setting each component of $\kappa:= (\kappa_x)_{x\in [d]}$:
    \begin{align*}
        \kappa_x := \frac{U_x}{\sum_{y\in [d]} U_y}.
    \end{align*} Therefore, 
    \begin{align*}
        |p(\kappa)|_1 = \sum_{x=1}^{d-1} \kappa_x \sim \beta(d-1 , 1),
    \end{align*} where by the last part of the expression we mean that $p(\kappa)$ has beta distribution with parameters $d-1$ and $1$. So in terms of the incomplete beta function $B(m;d-1,1)$ and the beta function $B(d-1,1)$, we can state the cumulative distribution function as:
    \begin{align*}
    \begin{split}
        \PP(|p(\kappa)|_1 \leq m) = \frac{B(m ;d-1,1)}{B(d-1,1)} = \frac{m^{d-1}}{d-1}.  
    \end{split}
    \end{align*} Using this fact, an integral identity, the fact that the $\kappa$'s are sampled i.i.d.:
    \begin{align}\label{eqn:loss_conv_3}
        \EE\Big[ \min_{\kappa \in\calK} |p(\kappa)|_1\Big] &= \int_0^1 \PP (|p(\kappa)|_1 \geq m)^A dm = \int_0^1 \Big(1 - \frac{m^{d-1}}{d-1}\Big)^A dm. 
    \end{align}


    Performing the substitution $\ell := m^{d-1} / (d-1)$, we get:
    \begin{align*}
        \int_0^1 \Big(1 - \frac{m^{d-1}}{d-1}\Big)^A dm&= (d-1)^{1/(d-1) - 1}\int_0^{1/(d-1)} (1 - \ell )^A \ell^{\tfrac{1}{d-1} - 1} d\ell.
    \end{align*}
    Notice that the integrand on the right hand side is the unnormalized probability density function of a beta distribution with parameters $1/(d-1)$ and $A+1$. Therefore the previous display is bounded above by extending the integral, then using Stirling's approximation:
    \begin{align}
    \begin{split}\label{eqn:integral_bound}
        (d-1)^{1/(d-1) - 1} \int_0^{1} (1 - \ell )^A \ell^{\tfrac{1}{d-1} - 1} d\ell &= (d-1)^{1/(d-1) - 1} \frac{\Gamma((d-1)^{-1}) \Gamma(A+1)}{\Gamma(A+1+(d-1)^{-1})} \\ 
        &\approx (d-1)^{1/(d-1) - 1}\Gamma((d-1)^{-1}) \sqrt{\frac{A+1}{A+1+(d-1)^{-1}}} \\
        &\qquad \times \Big(\frac{e}{A+1+(d-1)^{-1}}\Big)^{\tfrac{1}{d-1}} \Big(\frac{A+1}{A+1+(d-1)^{-1}}\Big)^{A+1} \\
        &\leq \tilde C_d \Big(\frac{e}{A+1+(d-1)^{-1}}\Big)^{\tfrac{1}{d-1}} \\
        &= \tilde C_d \calO\Big(\frac{1}{A^{1/(d-1)}}\Big),
    \end{split}
    \end{align} 
    where:
    \[
    \tilde C_d := (d-1)^{1/(d-1) - 1} \Gamma((d-1)^{-1}),
    \] and where we used the facts that:
    \[
    \Big(\frac{A+1}{A+1+(d-1)^{-1}}\Big)^{A+1} \leq 1 \quad \text{ and } \quad \sqrt{\frac{A+1}{A+1+(d-1)^{-1}}} \leq 1.
    \] So plugging in \eqref{eqn:loss_conv_3} into \eqref{eqn:loss_conv_2}, using \eqref{eqn:integral_bound}, and setting $C_d := C\tilde C_d\sqrt{d}$, we get:
    \[
    \EE|L_i(\theta^i) - \tilde L(\hat\kappa; \theta^i)| \leq \frac{C_d }{A^{1/(d-1)}}.
    \]

    
\qed

\section*{Appendix B. Additional Results for the Cybersecurity Example} \label{app:more-plots-cybersecurity}

In Figures \ref{fig:all_measures} and \ref{fig:all_costs}, we provide the plots for the cybersecurity example, but for many more initial conditions than offered earlier. The same labeling convention as in Figure~\ref{fig:two_measures} is adopted: the legend depicts the four states $DS$, $US$, $DI$, and $UI$; the prefix of DGM shows the DGME's result against a finite difference method solver.   In each figure, we provide a plot for every initial condition $\eta \in \calP([4])$ such that $\eta_x \in \{0,1/4,1/2,3/4,1\}$ for all $x\in [4]$. Figure \ref{fig:all_measures} displays the equilibrium trajectory $\mu^\eta(t)$ according according to the $\eta$ specified while Figure \ref{fig:all_costs} displays the cost along that equilibrium, $U(t,\cdot,\mu^\eta (t))$. The shape of the cost shows that the costliest situation for every initial condition and every time occurs when a player is both defended and infected. The cheapest situation is undefended and susceptible. For all initial conditions, there is a point in time when it is cheaper to be defended and susceptible than undefended and infected. Intuitively, since being infected carries a steep cost, one may want to pay for the computer's cybersecurity software to have a lower rate of incoming infection; if infected, transitioning to susceptible is more costly toward the end of the scenario because paying for the security software is costly and, without paying for the software, the transition to susceptible is slow. That said, the difference in total cost between defended susceptible and undefended infected in the latter part of the game is small, and all costs eventually converge to zero.

\begin{figure}
    \centering
    \includegraphics[width=165mm]{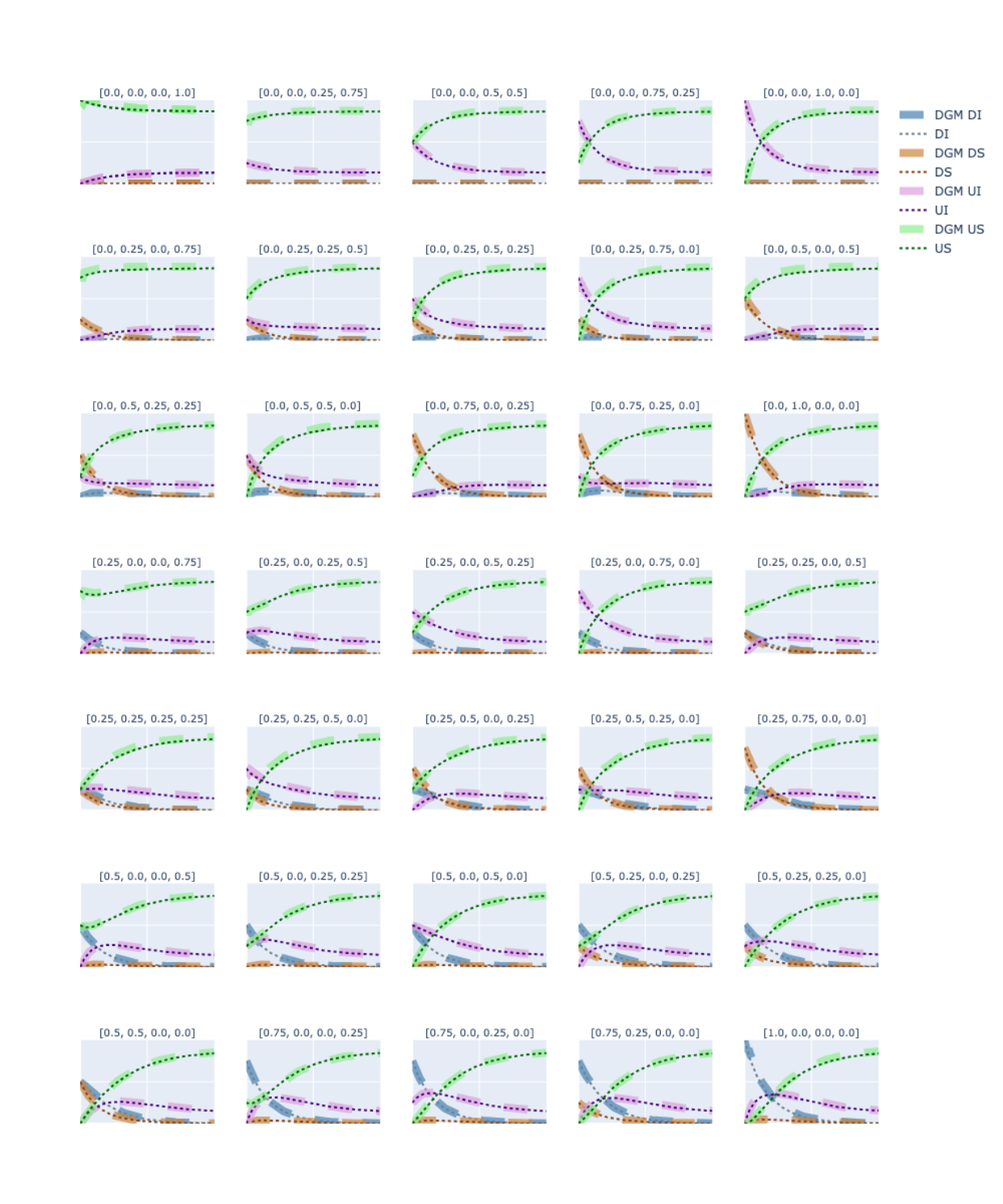}
    \caption{MFG equilibria $\mu$ for various initial conditions}
    \label{fig:all_measures}
\end{figure}

\newpage
\begin{figure}
    \centering
    \includegraphics[width=165mm]{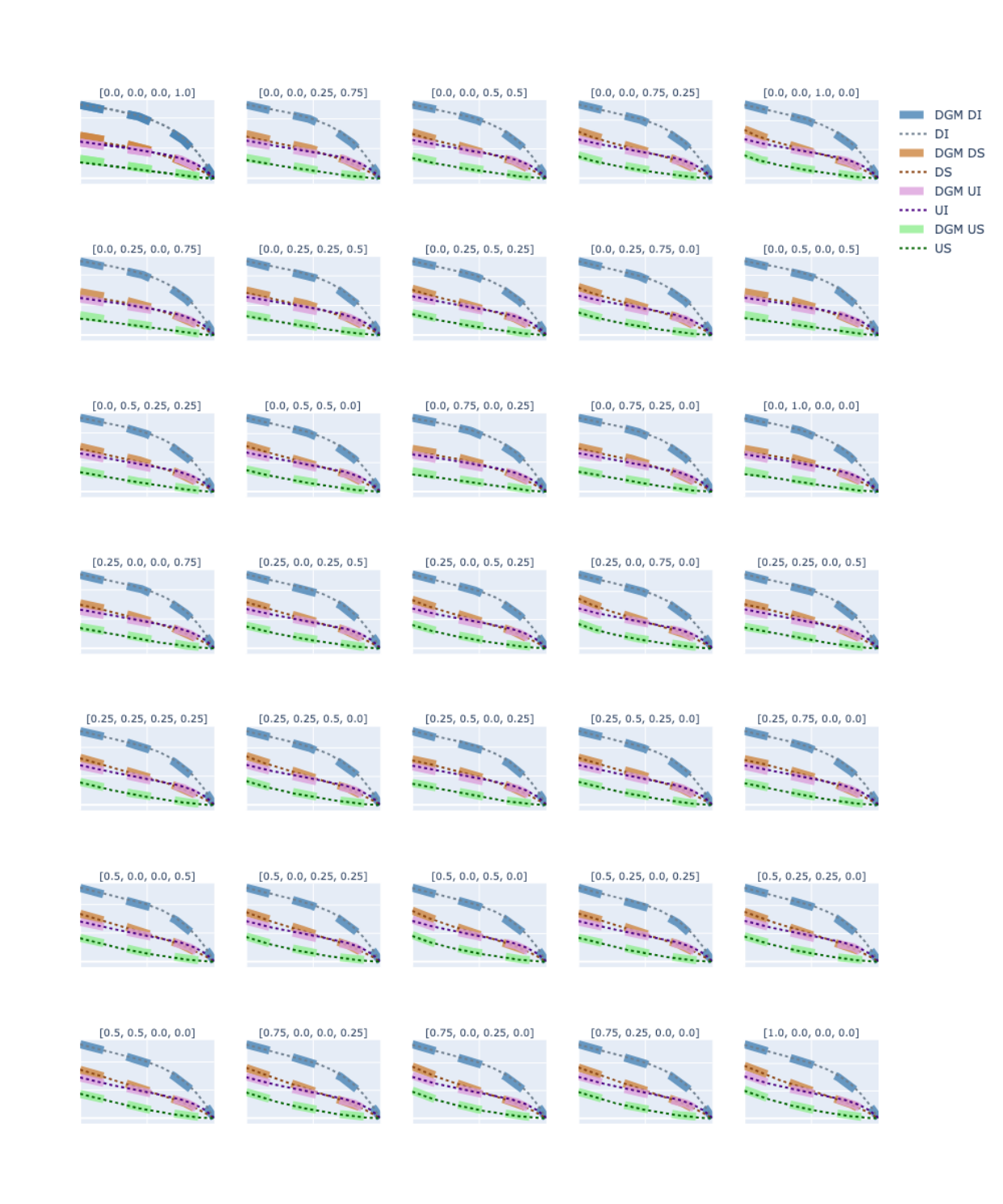}
    \caption{Cost trajectories $u$ for various initial conditions}
    \label{fig:all_costs}
\end{figure}


 \newpage
 \footnotesize
 \bibliographystyle{abbrv}
 \bibliography{trimmed_bib}



\end{document}